\newtheorem{thm}{Theorem}[section]
\newtheorem{defin}[thm]{Definition}
\newtheorem{lem}[thm]{Lemma}
\newtheorem{cor}[thm]{Corollary}
\newtheorem{rem}[thm]{Remark}
\newtheorem{examp}[thm]{Example}
\newtheorem{propst}[thm]{Proposition}
\def\Q{\mathbb{Q}}
\def\R{\mathbb{R}}
\def\C{\mathbb{C}}
\def\Cr{{\cal C}_ {\mathbb R}}
\title{Tools for analyzing the intersection curve between a torus and a quadric through projection and lifting}
\author[1]{Laureano Gonzalez-Vega}
\author[2]{Jorge Caravantes}
\author[3]{Gema M. Diaz-Toca}
\author[4]{Mario Fioravanti}
\affil[1]{CUNEF, Spain, \href{mailto:laureano.gonzalez@cunef.edu}{laureano.gonzalez@cunef.edu}}
\affil[2]{Universidad de Alcal\'a, Spain, \href{mailto:jorgecaravan@gmail.com}{jorgecaravan@gmail.com}}
\affil[3]{Universidad de Murcia, Spain, \href{mailto:gemadiaz@um.es}{gemadiaz@um.es}}
\affil[4]{Universidad de Cantabria, Spain, \href{mailto:mario.fioravanti@unican.es}{mario.fioravanti@unican.es}}
\begin{document}
\maketitle
\begin{abstract} 
This article introduces efficient and user-friendly tools for analyzing the intersection curve between a ringed torus and an irreducible quadric surface. Without loose of generality, it is assumed that the torus is centered at the origin, and its axis of revolution coincides with the $z$-axis. The paper primarily focuses on examining the curve's projection onto the plane $z=0$, referred to as the cutcurve, which is essential for ensuring accurate lifting procedures. Additionally, we provide a detailed characterization of the singularities in both the projection and the intersection curve, as well as the existence of double tangents. A key tool for the analysis is the theory of resultant and subresultant polynomials.
\end{abstract}

\section*{Introduction}
%

The intersection of two surfaces is a rich topic in algebraic geometry, which appears in many problems of Computer-Aided Design, Computer Aided Geometric Design, Computer animation, etc. In general, the intersection of two algebraic surfaces is itself a curve, and this curve has important geometric and topological properties that depend on the properties of the surfaces involved. In recent years, several papers have emerged addressing two main aspects: the study of the intersection curve for two specific surfaces (for example, see for two quadrics \cite{Berberich:2005:ECE:1064092.1064110}, \cite{DUPONT2008168}, \cite{Dupont2008c}, \cite{Dupont2008c3}, \cite{Gonzalez-Vega2021}, \cite{Schomer:2006}, \cite{Falai:2023}; for two tori \cite{Jia:2013}, for torus and sphere \cite{Kim1998}, for torus and ellipsoid \cite{breda2022}, etc); and the topology of a space curve by analyzing first the projection onto one plane and then, determining the lifting of the projection curve (see for example \cite{ALCAZAR2005719}, \cite{Pouget2024}, \cite{DMR}).

In this article, we analyze the intersection in $\mathbb{R}^3$ of a ring torus $\mathcal{T}$ centered at the origin and a real quadric $\mathcal{Q}$ by also studying the curve's projection onto the plane $z=0$, known as the cutcurve. However, unlike most jobs which study the topology of a curve, we do not require the intersection curve to be in pseudo general position. Recall that a space curve is said to be in pseudo generic position with respect to the $(x,y)$ plane if the projection $\Pi_z$ is injective up to a finite number of exceptions. This doesn't have to be true when studying the intersection of a torus and a quadric, as can be seen in several examples of Section \ref{secej}. Nor do we require the projection to be in general position. Neither of these two conditions must be met in our study. In this work, we focus on the study of the intersection curve by itself. For this, we will use tools of symbolic calculation, such as resultant and subresultants, and results of algebraic geometry.

We must mention that the intersection of a torus and a quadric have been previously considered under different points of view. For example, in \cite{bottema}, all real algebraic curves lying on a ring torus of degree less or equal to 6 are examined; 
the intersection of a torus and a quadric was also considered by \cite{lebesgue}, characterizing the cases in which it degenerates into two quartic  curves;  
in \cite{breda2024}, only 
the intersection curve of an hyperbolic cylinder with a torus sharing the same center is considered;  in \cite{breda2021}, it is introduded an implementation in GeoGebra of algorithms for visualizing the intersection curve of a quadric with a torus; and finally, in \cite{Kim}, \cite{Li2004AlgebraicAF}, the intersection with natural quadrics is considered.

The paper is organized as follows. Section \ref{sec1} recalls the well known notion of resultant, the subresultant polynomials, and we shall apply these concepts to the matter at hand, specifically that of a torus and a quadric. In Section \ref{sec2}, the definition of the cutcurve with its characterization is introduced. In Section \ref{lifting}, we study the relationship between the resultant, the first subresultant and the cutcurve. In Section \ref{lifting1}, we explore the relationship between the singularities of the cutcurve and those of the intersection curve. In Section \ref{ecuaciones}
 we present the expression of the cutcurve in terms of the coefficients of the implicit equation of the quadric, its relation with the silhouette curves of the torus and the quadric, and a characterization of the cutcurve singularities. Section \ref{secej} is dedicated to presenting examples which illustrate the results presented in the paper.

Through the paper, we will assume that the quadric is irreducible, i.e. neither the square of a plane nor the union of two planes.

\section{Resultants and subresultants}\label{sec1}

Resultants and subresultants are the algebraic tools used to determine both the projection of the intersection curve between the considered surfaces and its lifting from the plane to the 3D space, because they provide a very easy and compact way of characterizing the greatest common divisor of two polynomials when they involve parameters.

\medskip

The concept of polynomial determinant associated to a matrix provides one
of the usual ways to define Subresultant polynomials. Let $\Delta$ be a
$m\times n$ matrix with $m\leq n$. The determinant polynomial of $\Delta$,
$\hbox{\bf detpol}(\Delta)$, is defined as:
\[
\hbox{\bf detpol}(\Delta)=\sum_{k=0}^{n-m}\det(\Delta_k)x^{n-m-k}
\]
where $\Delta_k$ is the square submatrix of $\Delta$ consisting of the
first $m-1$ columns and the $(k+m)$--th column.

\begin{defin}\label{subres}
Let $$A(x)=\sum_{i=0}^m a_{i}x^i\qquad \hbox{and} \qquad B(x)=\sum_{i=0}^n b_{i}x^i$$ be two polynomials with coefficients in a field ($\Q$ or $\R$ in our case). The $i$--th Subresultant polynomial of $A$ and $B$, denoted by $
{Sres}_i(A,B)$, is defined as the determinant polynomial of
the following submatrix of Sylvester matrix of $A$ and $B$:
\[\overbrace{  \begin{pmatrix}
                       \begin{matrix}a_m&\ldots&a_0\\
                                  &\ddots&&\ddots& \\
                                  &&a_m&\ldots&a_0\\\end{matrix} \\
                       \begin{matrix}b_n&\ldots&b_0 \\
                                  &\ddots&&\ddots& \\
                                  &&b_n&\ldots&b_0\\\end{matrix} \\
\end{pmatrix}}^{n+m-i}
                  \begin{matrix} \left.\begin{matrix}\\ \\
\\\end{matrix}\right\}&n-i\\
                                 \left.\begin{matrix}\\ \\
\\\end{matrix}\right\}&m-i\\
                  \end{matrix}
                  \]
and we define the $i$--th subresultant coefficient of $A$ and $B$ with respect to $x$,
$ {sres}_i(A,B;x)$, as the coefficient of $x^i$ in $ Sres_i(A,B;x)$. Moreover, $sres_{i,j}$ denotes the coefficient of $x^j$ in the polynomial $Sres_i(A,B;x)$ for $j<i$. 
Observe that the resultant of $A$ and $B$ with respect to $x$ is $Sres_0(A,B;x)= sres_0(A,B;x).$

\end{defin}

There are many ways of defining and computing subresultants: for a short introduction, see \cite{Gonzalez-Vega2009} and the references cited therein. Subresultants allow an easy characterization of the degree of the greatest common divisor of two univariate polynomials whose coefficients depend on one or several parameters. More generally, the determinants $  sres_{i}(A,B;x)$, which are the
formal leading coefficients of the subresultant sequence for $A$
and $B$, can be used to compute the greatest common divisor
of $A$ and $B$, owing to the following equivalence:
\begin{equation}\label{gcd} {  Sres}_i(A,B;x)=\gcd(A,B)\Longleftrightarrow
\begin{cases}{  sres}_0(A,B;x)=\ldots={  sres}_{i-1}(A,B;x)=0& \cr
            \hfill{  sres}_i(A,B;x)\neq 0\hfill&
\cr\end{cases}\end{equation}

\bigskip
Suppose now that the torus and the quadric are defined respectively as follows,
$$
\mathcal{T}=\{(x,y,z)\in \mathbb{R}^3  :T(x,y,z)=0\} \text{ and }
\mathcal{Q}=\{(x,y,z)\in \mathbb{R}^3  :Q(x,y,z)= 0\},
$$
with 
$T(x,y,z):= z^4+p_2(x,y)z^2+p_0(x,y)\in \mathbb{R}[x,y,z]$,
$$
p_2(x,y) = 2(R^2 - r^2 + x^2 + y^2), $$
$$  p_0(x,y)=(R^2 + 2Rr + r^2 - x^2 - y^2)(R^2 - 2Rr + r^2 - x^2 - y^2), 0<r<R,
$$
and
$Q(x,y,z):=  z^2+q_1(x,y)z+q_0(x,y)\in \mathbb{R}[x,y,z]$,
$$q_1=ex + fy + i \;\text{ and } \; q_0=ax^2 + by^2 + dxy + gx + hy + j.$$
Then, regarding $T$ and $Q$ as polynomials in $z$ whose coefficients are polynomials in $x$ and $y$, we first observe that the resultant is not identically zero because the polynomials $T$ and $Q$ do not have a common factor, 
%

\begin{eqnarray}
{{Sres}_0}(x,y)&=&\left|
\begin {array}{cccccc} 1&0&{\it p_2}&0&{\it p_0}&0
\\ \noalign{\medskip}0&1&0&{\it p_2}&0&{\it p_0}
\\ \noalign{\medskip}1&{\it q_1}&{\it q_0}&0&0&0
\\ \noalign{\medskip}0&1&{\it q_1}&{\it q_0}&0&0
\\ \noalign{\medskip}0&0&1&{\it q_1}&{\it q_0}&0
\\ \noalign{\medskip}0&0&0&1&{\it q_1}&{\it q_0}
\end {array}\right| \label{S_0}
\\ 
&=&(  q_0q_1^2+ p_2q_0 -q_0^2    - p_0  )^2 + q_1^2(q_1^2 + p_2 - 2q_0)(p_0-q_0^2 ),\label{s0eq}
\end{eqnarray}
and the total degree of $ {  Sres}_0(x,y)$ in $x$ and $y$ is at most eight. Moreover, if $z_1$ and $z_2$ are the symbolic roots of $Q$,
$$
z_1={\frac {1}{2 } \left( - q_1+\sqrt {q_1^2-4\,{q_0}} \right) },\; z_2={\frac {1}{2}
 \left(- q_1-\sqrt {  q_1^2-4\, q_0}
 \right) },
$$
we have
\begin{equation}\label{fact}
Sres_0(x,y)= T(x,y,z_1)\,T(x,y,z_2).
\end{equation}

\medskip
The resultant is crucial for the study of the intersection curve that concerns us because,  due to how the torus and the quadric are defined, the following property is fulfilled: If $(x_0,y_0,z_0) \in \mathbb{C}^3$ is a common zero of  $ \mathcal{T}$ and $ \mathcal{Q}$, then $ Sres_0(x_0,y_0)=0$; and viceversa, if $  Sres_0(x_0,y_0)=0$, then for some $z_0\in  \mathbb{C}$ , $(x_0,y_0,z_0) \in  \mathbb{C}^3$ is a common zero of both $\mathcal{T}$ and $ \mathcal{Q}$. For proof, see Lemma 7.3.2. in \cite{Mishra}. By this property, one can think that the curve defined in $\mathbb{R}^2$ by $  Sres_0(x,y )=0$ is the projection of the intersection curve $ \mathcal{T}\cap \mathcal{Q}$. We will see in the next section that this is not entirely true.

Throughout the paper, we will use the notation $\widetilde{{\bf S}_0}(x,y)$ for the resultant polynomial, defined by Equation \eqref{S_0}, and the notation $\widehat{{\bf S}_0}(x,y)$ for its squarefree part.

\section{Projecting the intersection curve: the cutcurve}\label{sec2}

One of the classical tools used for studying intersections of surfaces are projections. 
Let $$\mathcal{C}_\mathbb{R}:= \{(x,y,z)\in \mathbb{R}^3  :T(x,y,z)=Q(x,y,z)=0    \}$$ be the  intersection curve of $\mathcal{T}$ and $\mathcal{Q}$ in $\mathbb{R}^3$. Given the projection $\Pi_z$, defined as follows
\[\begin{array}{*{20}{c}}
\Pi_z :&{{\mathbb{R}^3}}& \to &{{\mathbb{R}^2}}\\
{}&{\left( {x,y,z} \right)}& \mapsto &\,{\left( {x,y}  \right) ,}
\end{array}\]
the main goal of this work is to obtain as much information as possible about $\mathcal{C}_\mathbb{R}$ from its projection $\Pi_z(\mathcal{C}_\mathbb{R})$.


%
%
%
Let
$$
\Delta _{{\cal T}}(x,y) = p_0(x,y)=(R^2 + 2Rr + r^2 - x^2 - y^2)(R^2 - 2Rr + r^2 - x^2 - y^2),
$$
and let
$$
\Delta _{{\cal Q}}(x,y) = q_1(x,y)^2-4q_0(x,y)
$$
be the discriminant of $Q(x,y,z)$ with respect to $z$. In \cite{Berberich:2005:ECE:1064092.1064110},  the curves in $\R^2$ defined by $ \Delta _{{\cal T}}$ and $\Delta _{{\cal Q}}$ are called the silhouette of ${\cal T}$ and ${\cal Q}$ respectively.
Next theorem characterizes the set $\Pi_z \left( \mathcal{C}_\mathbb{R}\right)$, the projection of the real intersection curve of ${\cal T}$ and ${\cal Q}$, as a semialgebraic set in terms of the silhouettes $ \Delta _{{\cal T}}$ and $\Delta _{{\cal Q}}$.

\begin{thm}\label{projectioncurve} 
 \[
 \Pi_z \left(\mathcal{C}_\mathbb{R}\right )
 =
 \left\{ (x,y) \in \mathbb{R}^2:
\widehat{{\bf S}_0}(x,y) = 0,\Delta _{{\cal T}}(x,y)\leq 0,\Delta _{{\cal Q}}(x,y)\geq 0
 \right\} .
 \]

\end{thm}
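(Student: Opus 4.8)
The plan is to prove the set equality by double inclusion, reducing each side to a statement about the \emph{real} roots in $z$ of the two univariate polynomials $T(x_0,y_0,z)$ and $Q(x_0,y_0,z)$ obtained by fixing a point $(x_0,y_0)\in\mathbb{R}^2$. The bridge between the two univariate pictures is the resultant: since both $T$ and $Q$ are monic in $z$, the specialization property quoted from Lemma 7.3.2 in \cite{Mishra} guarantees that $\widetilde{{\bf S}_0}(x_0,y_0)=0$ holds precisely when $T(x_0,y_0,z)$ and $Q(x_0,y_0,z)$ share a root in $\mathbb{C}$; and since $\widehat{{\bf S}_0}$ is the squarefree part of $\widetilde{{\bf S}_0}$, the two polynomials have the same (real and complex) zero locus, so I may freely pass between them.

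Before the inclusions I would isolate the two ``reality'' facts encoded by the silhouettes. For the quadric this is immediate: $Q(x_0,y_0,z)=z^2+q_1z+q_0$ is a monic quadratic, so it has a real root if and only if its discriminant $\Delta_{\mathcal{Q}}(x_0,y_0)=q_1^2-4q_0$ is nonnegative, in which case \emph{both} roots $z_1,z_2$ are real. For the torus I would regard $T$ as a quadratic in $w=z^2$; the key (routine) identity is $p_2^2-4p_0=16R^2(x^2+y^2)\geq 0$, so both $w$-roots are real, their product equals $p_0$, and their sum $-p_2=-2(R^2-r^2+x^2+y^2)$ is strictly negative because $0<r<R$. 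A sign analysis then gives that $T(x_0,y_0,z)$ admits a real root if and only if some $w$-root is nonnegative, i.e. if and only if $p_0(x_0,y_0)=\Delta_{\mathcal{T}}(x_0,y_0)\leq 0$.

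For the inclusion $\subseteq$ I would take $(x_0,y_0)\in\Pi_z(\mathcal{C}_\mathbb{R})$, so there is a real $z_0$ with $T(x_0,y_0,z_0)=Q(x_0,y_0,z_0)=0$. Being a common root, $z_0$ forces $\widetilde{{\bf S}_0}(x_0,y_0)=0$ and hence $\widehat{{\bf S}_0}(x_0,y_0)=0$; being a real root of $T$ it forces $\Delta_{\mathcal{T}}(x_0,y_0)\leq 0$ by the torus fact; and being a real root of $Q$ it forces $\Delta_{\mathcal{Q}}(x_0,y_0)\geq 0$ by the quadric fact, so all three defining conditions hold. For the reverse inclusion $\supseteq$ I would assume the three conditions: from $\widehat{{\bf S}_0}(x_0,y_0)=0$ and the resultant property there is a common root $z_0\in\mathbb{C}$ of $T(x_0,y_0,z)$ and $Q(x_0,y_0,z)$. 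The decisive point is that $z_0$ is in particular a root of the quadratic $Q(x_0,y_0,z)$, hence $z_0\in\{z_1,z_2\}$; and $\Delta_{\mathcal{Q}}(x_0,y_0)\geq 0$ makes $z_1,z_2$ real, so $z_0$ is automatically real and $(x_0,y_0,z_0)\in\mathcal{C}_\mathbb{R}$.

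I expect the main conceptual obstacle to be exactly this last step: the resultant only yields a \emph{complex} common root, and it is the low degree of $Q$ together with $\Delta_{\mathcal{Q}}\geq 0$ that upgrades it to a real one. Notice that $\Delta_{\mathcal{T}}\leq 0$ is not actually consumed in the $\supseteq$ direction and is instead recovered a posteriori (the real common point lies on the torus), so the genuine content is the necessity in $\subseteq$ together with the two silhouette facts. The only computation requiring care is the identity $p_2^2-4p_0=16R^2(x^2+y^2)$ and the accompanying product/sum bookkeeping for the $w$-roots; everything else is formal.
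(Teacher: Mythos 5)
Your proof is correct and follows essentially the same route as the paper: double inclusion, the resultant specialization property to get a common complex root, and the nonnegativity of $\Delta_{\mathcal{Q}}$ (the discriminant of the monic quadratic $Q$ in $z$) to upgrade that root to a real one, with $\Delta_{\mathcal{T}}\leq 0$ only needed in the forward direction. The single divergence is how you obtain $\Delta_{\mathcal{T}}\leq 0$: you analyze $T$ as a quadratic in $w=z^2$ via the identity $p_2^2-4p_0=16R^2(x^2+y^2)$, whereas the paper uses the quartic discriminant formula $\mathrm{disc}(T)=4096\,\Delta_{\mathcal{T}}R^4(x^2+y^2)^2$; your variant is equally valid (and sidesteps the degenerate point $(x,y)=(0,0)$ cleanly).
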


\begin{proof}
If $(a,b)\in\Pi_z \left(\mathcal{C}_\mathbb{R} \right)$, then there exists $c\in\mathbb{R}$ such that $(a,b,c)\in {\cal T}  \cap {\cal Q} $. Thus $T(a,b,c)= Q(a,b,c)=0$, and in consequence, $T(a,b,z)$ and $Q(a,b,z)$ have a common factor, and we can conclude that $\widehat{{\bf S}_0}(a,b)=0$. Moreover, since $c$ is a real root of $Q(a,b,z)$, we should have
$\Delta _{{\cal Q} }(a,b)\geq 0$. On the other side,  $T(a,b,c)= T(a,b,-c)=0$ and the other two roots of $T(a,b,z)$ are not real, so that the discriminant is zero or negative. Then, since
$$
{\rm disc}(T(a,b,z))=4096\,\Delta_{\cal T}(a,b)R^4(a^2 + b^2)^2,
$$
and $R^4(a^2 + b^2)^2> 0$, we should have $\Delta_{\cal T}(a,b)\leq 0$.

\medskip
Conversely, if $(a,b)\in\mathbb{R}^2$ verifies $\widehat{{\bf S}_0}(a,b)=0$, then there exists $c \in \mathbb{C}$ with $T(a,b,c)=Q(a,b,c)=0$. Moreover, since $\Delta_{ \cal Q }(a,b)\geq 0$, the common root $c$ must be real. Therefore, $(a,b,c)$ is in $\mathcal{C}_\mathbb{R}$ and $(a,b)\in \Pi_z \left(\mathcal{C}_\mathbb{R}\right )$.
\end{proof}

%
%
%
%

Thus, the projection of the (real) intersection curve is bounded by the silhouettes of ${\cal T}$ and ${\cal Q}$, and therefore, it is not described just as the (real) curve defined by the squarefree part of the resultant. 

\begin{defin}
Let ${\cal T}$ be a torus and ${\cal Q}$ be a quadric in $\R^3$. We say that the semialgebraic set $\Pi_z \left( {\cal C}_{\mathbb R} \right)$ is the {\bf \emph{cutcurve}} of ${\cal T}$ and ${\cal Q}$. We also define
$$
{\cal A}_{{\cal T},{\cal Q}}:=\left\{ (x,y) \in \mathbb{R}^2:\Delta _{{\cal T}}(x,y)\leq 0, \Delta _{{\cal Q}}(x,y)\geq 0
 \right\} .
$$
 \end{defin}

\noindent

Section \ref{secej} shows several examples which illustrate these concepts.

\subsection{The cutcurve and the lifting process}\label{lifting}

The objective of this work is to obtain the maximum information of the real intersection curve from the cutcurve. So, the first question that may arise is, given a point on the cutcurve, from which point it is projection.  In this section we will see the crucial role that $q_1$ has when giving an answer to this question.

As we mentioned before, given $(x_0,y_0)\in\mathbb{R}^2$ with $\widehat{{\bf S}_0}(x_0,y_0)=0$, then there exists $z_0 \in \mathbb{C}$ with $T(x_0,y_0,z_0)=Q(x_0,y_0,z_0)=0$. This does not imply that $(x_0,y_0)\in \Pi_z \left(\mathcal{C}_\mathbb{R}\right ) $ because  $z_0$ may be complex and non-real, but this situation can not occur when $sres_1(x_0,y_0)\neq 0$.

\begin{propst}\label{z0L}
If $\left(x_0,y_0\right)$ is a real point with $\widehat{{\bf S}_0}(x_0,y_0)=0$ such that $sres_1(x_0,y_0)\neq 0$, then the point $(x_0,y_0)$ is on the cutcurve, and it is the projection of an unique point in $\mathcal{C}_\mathbb{R}$ whose z-coordinate  is given by
\begin{equation}\label{z0}
z_0 = - \frac{sres_{1,0}(x_0,y_0)}{sres_1(x_0,y_0)}\,\in \mathbb{R}\, .
\end{equation}
\end{propst}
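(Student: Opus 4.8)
The plan is to read the hypotheses through the gcd characterization furnished by Equation~\eqref{gcd}, applied with $i=1$ to $T$ and $Q$ regarded as polynomials in $z$ with coefficients in $\mathbb{R}[x,y]$, and then specialized at the point $(x_0,y_0)$. First I would note that the squarefree part $\widehat{{\bf S}_0}$ and the resultant $\widetilde{{\bf S}_0}$ have the same real zero set, so the hypothesis $\widehat{{\bf S}_0}(x_0,y_0)=0$ is simply $sres_0(x_0,y_0)=0$.

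A preliminary remark that makes everything work cleanly is that the leading coefficients of $T$ and $Q$ in $z$ are both the constant $1$; hence $\deg_z T=4$ and $\deg_z Q=2$ are preserved under specialization, and the parametric subresultant $Sres_1(T,Q;z)$ evaluated at $(x_0,y_0)$ coincides with the first subresultant of the univariate polynomials $T(x_0,y_0,z)$ and $Q(x_0,y_0,z)$. With $sres_0(x_0,y_0)=0$ and $sres_1(x_0,y_0)\neq 0$, Equation~\eqref{gcd} then yields that this specialized $Sres_1$ is a greatest common divisor of $T(x_0,y_0,z)$ and $Q(x_0,y_0,z)$. Since $sres_1(x_0,y_0)\neq 0$ is the leading coefficient of the degree-one polynomial $Sres_1$ in $z$, the gcd has degree exactly one,
$$
Sres_1(T,Q;z)\big|_{(x_0,y_0)} = sres_1(x_0,y_0)\,z + sres_{1,0}(x_0,y_0),
$$
whose unique root is $z_0 = -\,sres_{1,0}(x_0,y_0)/sres_1(x_0,y_0)$.

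Next I would collect the three conclusions. The value $z_0$ is real because all subresultant coefficients of the real polynomials $T$ and $Q$ are real. Being the root of a common divisor, $z_0$ is the only common root of $T(x_0,y_0,z)$ and $Q(x_0,y_0,z)$, so $(x_0,y_0,z_0)$ is the unique real point of $\mathcal{C}_\mathbb{R}$ lying over $(x_0,y_0)$: any point of $\mathcal{C}_\mathbb{R}$ projecting to $(x_0,y_0)$ must have a $z$-coordinate that is a common root, hence equal to $z_0$. In particular $(x_0,y_0)\in\Pi_z(\mathcal{C}_\mathbb{R})$, i.e.\ it lies on the cutcurve (this also follows from Theorem~\ref{projectioncurve}).

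The main obstacle I anticipate is not the computation but the justification that specializing the parametric subresultant at $(x_0,y_0)$ still computes the gcd of the specialized univariate polynomials; in general this specialization property can fail precisely when leading coefficients vanish under the specialization. Here it is harmless, since the leading coefficients in $z$ are the nonzero constant $1$, but this point deserves to be stated explicitly rather than taken for granted, and it is the one place where the specific shape of $T$ and $Q$ is genuinely used.
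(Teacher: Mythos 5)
Your proposal is correct and follows essentially the same route as the paper, whose entire proof is a one-line appeal to Equation~\eqref{gcd}; you simply spell out the details (degree-one gcd, real coefficients, uniqueness of the lift) that the paper leaves implicit. Your explicit remark that specialization at $(x_0,y_0)$ commutes with the subresultant computation because the leading coefficients of $T$ and $Q$ in $z$ are the constant $1$ is a worthwhile addition, not a deviation.
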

\begin{proof}
This statement follows directly from (\ref{gcd}).
\end{proof}



Regarding the real zero set of the polynomial $sres_1$, its definition is very simple, and besides, the characteristics of the torus and quadrics will provide us with an easy characterization of the points on the cutcurve with $sres_1(x_0,y_0)=0$.

\begin{lem}\label{sres1}
$$
Sres_1 = q_1(q_1^2+p_2 - 2q_0)z +q_0q_1^2 + p_2q_0 - q_0^2 - p_0,$$
such that
$$
sres_1 =q_1(q_1^2+p_2 - 2q_0) \text{ and } sres_{1,0} =q_0q_1^2 + p_2q_0 - q_0^2 - p_0.
$$
\end{lem}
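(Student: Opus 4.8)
The plan is to obtain the formula by directly unwinding the determinantal definition of the subresultant (Definition~\ref{subres}) for the pair $(T,Q)$ at index $i=1$, and then to double-check the outcome against the Euclidean division of $T$ by $Q$. Regarding $T$ and $Q$ as polynomials in $z$, we have $\deg_z T = m = 4$ and $\deg_z Q = n = 2$, so the submatrix defining $Sres_1$ is built from $n-i=1$ row of coefficients of $T$ and $m-i=3$ rows of coefficients of $Q$, and it has $n+m-i=5$ columns. In particular $Sres_1$ is a polynomial of degree one in $z$, which is exactly why it can be written as $sres_1\,z+sres_{1,0}$; the content of the lemma is the explicit value of these two coefficients.

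First I would write down the relevant $4\times 5$ matrix, namely
\[
\Delta=\begin{pmatrix}
1 & 0 & p_2 & 0 & p_0\\
1 & q_1 & q_0 & 0 & 0\\
0 & 1 & q_1 & q_0 & 0\\
0 & 0 & 1 & q_1 & q_0
\end{pmatrix},
\]
whose first row carries the coefficients $1,0,p_2,0,p_0$ of $T$ and whose last three rows carry the shifted coefficients $1,q_1,q_0$ of $Q$. Applying $\mathbf{detpol}$ (with $m'=4$ rows and $n'=5$ columns) yields $Sres_1=\det(\Delta_0)\,z+\det(\Delta_1)$, where $\Delta_0$ uses the first three columns together with the fourth column, and $\Delta_1$ uses the first three columns together with the fifth column. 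Thus $sres_1=\det(\Delta_0)$ and $sres_{1,0}=\det(\Delta_1)$, and the whole statement reduces to evaluating these two $4\times 4$ determinants.

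The computational heart of the proof is these two expansions. I would expand each along its distinguished last column, which is sparse: for $\Delta_0$ the column is $(0,0,q_0,q_1)^{\top}$ and the two surviving $3\times 3$ minors collapse (one of them being triangular) to $q_1$ and $q_1^2+p_2-q_0$, giving $\det(\Delta_0)=q_1(q_1^2+p_2-2q_0)$; for $\Delta_1$ the column is $(p_0,0,0,q_0)^{\top}$, the minors are $1$ and again $q_1^2+p_2-q_0$, giving $\det(\Delta_1)=q_0q_1^2+p_2q_0-q_0^2-p_0$. These match the asserted $sres_1$ and $sres_{1,0}$. I expect no conceptual obstacle here; the only delicate points are the cofactor signs and the correct identification, from Definition~\ref{subres}, of which rows and columns form the $Sres_1$ block.

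Finally, as an independent check I would carry out the division of $T$ by the polynomial $Q$, which is monic in $z$. Since $Q$ is monic, the pseudo-remainder equals the ordinary remainder, and a short reduction gives $\mathrm{rem}(T,Q)=-\bigl(q_1(q_1^2+p_2-2q_0)\,z+(q_0q_1^2+p_2q_0-q_0^2-p_0)\bigr)$. By the standard relation between the subresultant of index $n-1=1$ and this remainder (see \cite{Gonzalez-Vega2009}), $Sres_1$ coincides with $\mathrm{rem}(T,Q)$ up to the sign fixed by the degrees and $\mathrm{lc}(Q)$, reproducing both coefficients and confirming the sign conventions. One may further sanity-check that $sres_{1,0}$ is precisely the quantity squared in the resultant expression \eqref{s0eq}.
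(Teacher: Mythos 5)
Your proposal is correct and follows essentially the same route as the paper: the paper's proof simply displays the two $4\times4$ determinants $\det(\Delta_0)$ and $\det(\Delta_1)$ coming from the determinant-polynomial definition (exactly the matrices you identify), and your expansions of them yield the stated $sres_1$ and $sres_{1,0}$. The additional verification via the Euclidean remainder of $T$ by $Q$ is a sound consistency check but not a different method.
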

\begin{proof}
By definition, we have
$$sres_1=\left|\begin{array}{cccc}
1 & 0 & {p_2}  & 0
\\
 1  & {q_1}  & {q_0}  & 0
\\
 0 & 1  & {q_1}  & {q_0}
\\
 0 & 0 & 1  & {q_1}
\end{array}\right| \;
\text{ and } \; sres_{1,0}=\left|\begin{array}{cccc}
1 & 0 & {p_2}  & {p_0}
\\
 1  & {q_1}  & {q_0}  & 0
\\
 0 & 1  & {q_1}  & 0
\\
 0 & 0 & 1  & {q_0}
\end{array}\right|.
$$
\end{proof}

\noindent Observe that by Equation (\ref{s0eq}) and Lemma \ref{sres1}, we have
\begin{equation}
\label{expS0}
 \widetilde{{\bf S}_0}=sres_{1,0}^2+sres_1\, q_1(p_0 - q_0^2) .
\end{equation}

\bigskip
Due to these equalities, the following result enables us to characterize the points on the cutcurve which satisfy $sres_1(x_0,y_0)=0$.

\begin{propst}\label{q1sres1}
Let $ (x_0,y_0) \in  \R^2$ be a real point on the cutcurve. Then $sres_1(x_0,y_0)=0$ if and only if $q_1(x_0,y_0)=0$.
\end{propst}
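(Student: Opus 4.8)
The plan is to read off the two directions directly from the factored form of $sres_1$ in Lemma \ref{sres1}, and then use the cutcurve constraints to discard the spurious factor. Writing $sres_1 = q_1\,(q_1^2 + p_2 - 2q_0)$, the implication $q_1(x_0,y_0)=0 \Rightarrow sres_1(x_0,y_0)=0$ is immediate. For the converse I would argue by contradiction: suppose $sres_1(x_0,y_0)=0$ while $q_1(x_0,y_0)\neq 0$. Then the second factor must vanish, i.e.
\[
q_1^2 + p_2 - 2q_0 = 0 \qquad \text{at } (x_0,y_0).
\]

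Next I would bring in the resultant identity \eqref{expS0}, namely $\widetilde{{\bf S}_0} = sres_{1,0}^2 + sres_1\,q_1(p_0 - q_0^2)$. Since $(x_0,y_0)$ lies on the cutcurve we have $\widehat{{\bf S}_0}(x_0,y_0)=0$, and because $\widehat{{\bf S}_0}$ is the squarefree part of $\widetilde{{\bf S}_0}$ the two polynomials share the same zero set, so $\widetilde{{\bf S}_0}(x_0,y_0)=0$ as well. Plugging $sres_1(x_0,y_0)=0$ into \eqref{expS0} collapses it to $sres_{1,0}(x_0,y_0)^2 = 0$, hence $sres_{1,0}(x_0,y_0)=0$. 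Substituting $p_2 = 2q_0 - q_1^2$ (from the vanishing second factor) into $sres_{1,0} = q_0 q_1^2 + p_2 q_0 - q_0^2 - p_0$ makes the $q_1$-terms cancel and leaves $q_0^2 - p_0 = 0$, that is $p_0(x_0,y_0) = q_0(x_0,y_0)^2$.

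Finally I would exploit the torus silhouette inequality. By Theorem \ref{projectioncurve} a point of the cutcurve satisfies $\Delta_{\cal T} = p_0 \leq 0$, while we have just shown $p_0 = q_0^2 \geq 0$; therefore $p_0(x_0,y_0) = 0$ and $q_0(x_0,y_0)=0$. Feeding $q_0=0$ back into $p_2 = 2q_0 - q_1^2$ gives $p_2(x_0,y_0) = -q_1(x_0,y_0)^2 < 0$, using $q_1(x_0,y_0)\neq 0$. But $p_2 = 2(R^2 - r^2 + x_0^2 + y_0^2)$, so $p_2 < 0$ forces $x_0^2 + y_0^2 < r^2 - R^2$, which is impossible because $0 < r < R$ makes the right-hand side negative. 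This contradiction shows $q_1(x_0,y_0)=0$, completing the converse.

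The computations (the cancellation producing $q_0^2 - p_0$ and the final sign estimate) are routine; the step carrying the real content is recognizing that the cutcurve membership condition $\Delta_{\cal T}\le 0$, together with the non-negativity of $q_0^2$, is exactly what rules out the branch $q_1^2 + p_2 - 2q_0 = 0$. I would expect the points requiring the most care to be the passage from $\widehat{{\bf S}_0}(x_0,y_0)=0$ to $\widetilde{{\bf S}_0}(x_0,y_0)=0$, and keeping track of which cutcurve inequality is actually invoked, since only the torus silhouette $\Delta_{\cal T}$ (and not $\Delta_{\cal Q}$) enters the argument.
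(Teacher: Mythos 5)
Your proof is correct, but it follows a genuinely different route from the paper's. The paper argues directly on the factorization $sres_1=q_1\,(q_1^2+p_2-2q_0)$: at any cutcurve point the quadric-silhouette inequality $\Delta_{\cal Q}=q_1^2-4q_0\geq 0$ together with $p_2=2(R^2-r^2+x^2+y^2)>0$ makes the second factor strictly positive (splitting into the cases $q_0\leq 0$ and $q_0>0$), so that factor can never vanish and $sres_1=0$ forces $q_1=0$ with no further machinery. You instead argue by contradiction through the resultant identity \eqref{expS0}: from $\widehat{{\bf S}_0}(x_0,y_0)=0$ you deduce $sres_{1,0}(x_0,y_0)=0$, substitute $p_2=2q_0-q_1^2$ to collapse $sres_{1,0}$ to $q_0^2-p_0$, and then play the torus-silhouette inequality $\Delta_{\cal T}=p_0\leq 0$ against $q_0^2\geq 0$ to force $q_0=0$ and hence $p_2=-q_1^2<0$, contradicting $p_2>0$. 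Each argument invokes exactly one of the two silhouette constraints of Theorem \ref{projectioncurve} ($\Delta_{\cal Q}\geq 0$ for the paper, $\Delta_{\cal T}\leq 0$ plus $\widehat{{\bf S}_0}=0$ for you), and both are sound. The paper's version is shorter and yields as a by-product the strict positivity of $q_1^2+p_2-2q_0$ on the whole cutcurve, which is reused (via Lemma \ref{sres1q10}) to justify Definition \ref{def:S_0 buena}; your version pays for the identity \eqref{expS0} and for the small extra step, which you rightly flag, that $\widehat{{\bf S}_0}$ and $\widetilde{{\bf S}_0}$ have the same real zero set.
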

\begin{proof}
If $q_1(x_0,y_0)=0$, then $sres_1(x_0,y_0)=0$ since $q_1$ is a divisor of $sres_1$ by Lemma  \ref{sres1}. To prove the converse, we need to show that if $sres_1(x_0,y_0)=0$, then $q_1(x_0,y_0)=0$. We will prove that $p_2(x_0,y_0) +q_1^2(x_0,y_0)- 2q_0(x_0,y_0)\neq 0$.

First of all, observe that the polynomial $p_2=2(R^2-r^2+x^2+y^2)$ is strictly positive over $\mathbb{R}$, because $R>r>0$. Now, since  $ (x_0,y_0) $ is on the cutcurve, we have $q_1^2(x_0,y_0)- 4q_0(x_0,y_0)\geq 0$, and, we can have either $q_0(x_0,y_0)\leq 0$, and so, $p_2(x_0,y_0) +q_1^2(x_0,y_0)- 2q_0(x_0,y_0)>0$, or $q_0(x_0,y_0)>0$, so that $p_2(x_0,y_0) +q_1^2(x_0,y_0)- 2q_0(x_0,y_0) > p_2(x_0,y_0) +q_1^2(x_0,y_0)- 4q_0(x_0,y_0)>0$. Thus we can conclude that $p_2(x_0,y_0) +q_1^2(x_0,y_0)- 2q_0(x_0,y_0)\neq 0$ and so $q_1(x_0,y_0)=0$.
\end{proof}

Thus, when $q_1$ is not identically zero, it is easy to get the points $(x_0,y_0)$ on the cutcurve with $sres_1(x_0,y_0)=0$, because they are the intersection points of the cutcurve with the line defined by $q_1=0$. Such points are characterized because each one is projection of two different real points of the intersection curve, or of a point belonging to the plane $z=0$. Also observe that if $q_1$ does not divide $\widehat{{\bf S}_0}(x_0,y_0)$, there will be only a finite number of points on the cutcurve with $q_1=0$.

On the other hand, if $q_1\equiv0$ then the quadric $\mathcal{Q}$ is symmetric with respect to the plane $z=0$. Since $\mathcal{T}$ is also symmetric with respect to $z=0$, the intersection curve will also keep this symmetry, meaning that for any $(x_0,y_0)$ vanishing $\widehat{{\bf S}_0}(x,y)$, $(x_0,y_0,\sqrt{-q_0(x_0,y_0)})$ and $(x_0,y_0,-\sqrt{-q_0(x_0,y_0)})$ are both in $\mathcal{T}\cap\mathcal{Q}$. Obviously, $(x_0,y_0)$ is on the cutcurve iff $q_0(x_0,y_0)$ is negative or equal to 0.

Moreover, remember that our quadric is real, and so, $q_1\equiv0$ is equivalent to $sres_1\equiv 0$.

\begin{lem}\label{sres1q10} Since the quadric surface $\cal Q$ is real, we have that  $sres_1\equiv 0$ if and only if $q_1\equiv 0$.  
\end{lem}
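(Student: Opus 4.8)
The plan is to exploit the explicit factorization of $sres_1$ already established in Lemma~\ref{sres1}, namely
\[
sres_1 = q_1\bigl(q_1^2 + p_2 - 2q_0\bigr),
\]
and to combine it with the standing hypothesis that $\mathcal{Q}$ is a real quadric. One implication is immediate and requires no realness: if $q_1 \equiv 0$, then $q_1$ divides $sres_1$ by the formula above, so $sres_1 \equiv 0$. The entire substance of the lemma therefore lies in the converse, and that is where I would concentrate the work.

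For the converse, I would start by assuming $sres_1 \equiv 0$. Since $sres_1$, $q_1$, and $q_1^2 + p_2 - 2q_0$ all live in the integral domain $\mathbb{R}[x,y]$, the displayed factorization forces either $q_1 \equiv 0$ (the desired conclusion) or $q_1^2 + p_2 - 2q_0 \equiv 0$. Thus the task reduces to ruling out the vanishing of the second factor, and it is precisely here that the realness of $\mathcal{Q}$ must be brought in; purely algebraically the second factor could in principle vanish.

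I would argue by contradiction. Suppose $q_1^2 + p_2 - 2q_0 \equiv 0$, i.e.\ $2q_0 = q_1^2 + p_2$. Substituting this into the discriminant of $Q$ with respect to $z$ yields
\[
\Delta_{\cal Q} = q_1^2 - 4q_0 = q_1^2 - 2\bigl(q_1^2 + p_2\bigr) = -q_1^2 - 2p_2.
\]
At this point I would invoke the very same positivity already used in the proof of Proposition~\ref{q1sres1}: because $0 < r < R$, the polynomial $p_2 = 2(R^2 - r^2 + x^2 + y^2)$ is strictly positive on all of $\mathbb{R}^2$, so $\Delta_{\cal Q} = -q_1^2 - 2p_2 < 0$ at \emph{every} point $(x,y)$. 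But $\Delta_{\cal Q}(x,y) < 0$ means the monic quadratic $Q(x,y,z) = z^2 + q_1 z + q_0$ has no real root in $z$; consequently $\mathcal{Q}$ would possess no real point whatsoever, contradicting the assumption that $\mathcal{Q}$ is a real quadric surface. Hence $q_1^2 + p_2 - 2q_0 \not\equiv 0$, and the factorization leaves only $q_1 \equiv 0$.

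The one delicate point, and the conceptual heart of the argument, is recognizing that the identical vanishing of the \emph{second} factor is not an algebraic coincidence to be dismissed but is geometrically forbidden: it is exactly equivalent to the discriminant being negative everywhere, which deprives the quadric of all its real points. The strict positivity of $p_2$, guaranteed by the torus condition $0 < r < R$, is the single quantitative fact that closes the argument, just as it did in Proposition~\ref{q1sres1}.
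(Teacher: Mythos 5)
Your proof is correct and follows essentially the same route as the paper: both rely on the factorization $sres_1=q_1(q_1^2+p_2-2q_0)$ from Lemma~\ref{sres1}, the strict positivity of $p_2$ coming from $0<r<R$, and the fact that a real point on $\mathcal{Q}$ forces $\Delta_{\cal Q}\ge 0$ somewhere. The only cosmetic difference is that the paper evaluates the second factor at a real point of $\mathcal{Q}$ and shows it is positive by a sign case analysis on $q_0$, whereas you substitute the hypothesized identity $2q_0=q_1^2+p_2$ into $\Delta_{\cal Q}$ and derive that $\mathcal{Q}$ would be empty; the two arguments are contrapositives of one another.
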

\begin{proof}
We will prove that the polynomial $p_2 +q_1^2- 2q_0$ cannot be identically zero. First of all, observe that the polynomial $p_2=2(R^2-r^2+x^2+y^2)$ is strictly positive over $\mathbb{R}$ because $R>r>0$. Now, let $(x_0,y_0,z_0)$ be a real point on $\cal Q$.
Then $q_1^2(x_0,y_0)- 4q_0(x_0,y_0)\geq 0$ and, we can have either $q_0(x_0,y_0)\leq 0$, and so, $p_2(x_0,y_0) +q_1^2(x_0,y_0)- 2q_0(x_0,y_0)>0$, or $q_0(x_0,y_0)>0$, so that $p_2(x_0,y_0) +q_1^2(x_0,y_0)- 2q_0(x_0,y_0) > p_2(x_0,y_0) +q_1^2(x_0,y_0)- 4q_0(x_0,y_0)>0$. Thus we can conclude that $p_2 +q_1^2- 2q_0$ is positive for every real point $(x_0,y_0,z_0)\in \cal Q$, and so, it cannot be identically zero.
\end{proof}

To summarize, given $(x_0,y_0)$ on the cutcurve, 
\begin{itemize}
\item When $q_1\not\equiv 0$,
\begin{itemize}
\item If $q_1(x_0,y_0)\ne 0$, then $(x_0,y_0)$ is projection of the point $(x_0,y_0,z_0)$, $z_0$ given by Equation \eqref{z0}.
\item If $q_1(x_0,y_0)= 0$, then $(x_0,y_0)$ is projection of $(x_0,y_0,\pm\sqrt{-q_0(x_0,y_0)})$.
\end{itemize}
\item When $q_1\equiv 0$, then $\mathcal{T}\cap\mathcal{Q}$ is symmetric wrt the plane $z=0$ and $(x_0,y_0)$ is projection of $(x_0,y_0,\pm\sqrt{-q_0(x_0,y_0)})$
\end{itemize}


\subsection{Relationship between the resultant and the cutcurve}\label{lifting2}

First of all, observe that by the proof of Lemma \ref{sres1q10},  $(q_1^2 +p_2  - 2q_0)(x_0,y_0)>0$ for every $(x_0,y_0)$ of the cutcurve, and so, we can ignore the conic defined by $p_2+q_1^2   - 2q_0$ when considering the resultant. Hence, we can now give the following definition:

\begin{defin}\label{def:S_0 buena}
\[{{\bf S}_0} = \frac{\widehat{{\bf S}_0}}{\mathrm{gcd}(\widehat{{\bf S}_0},p_2  +q_1^2 - 2q_0 )}\, . \]
\end{defin}

With Definition \ref{def:S_0 buena}, we have that ${\bf S}_0$ is the generator of the ideal of all polynomials vanishing at every point of the cutcurve. Thus, from this point onward, when we talk about the curve defined by the resultant, we mean the curve defined by ${\bf S}_0$, and therefore, we also redefine the cutcurve $\Pi_z \left(\mathcal{C}_\mathbb{R}\right )$ as follows:

\[
 \Pi_z \left(\mathcal{C}_\mathbb{R}\right )
 =
 \left\{ (x,y) \in \mathbb{R}^2:
 {{\bf S}_0}(x,y) = 0,\Delta _{{\cal T}}(x,y)\leq 0,\Delta _{{\cal Q}}(x,y)\geq 0
 \right\} .
 \]

Nevertheless, the cutcurve is clearly a portion of the curve defined by ${\bf S}_0$, making it relevant to determine the conditions under which they coincide.”

%
\begin{propst} \label{resultant_es_cutc}
If the polynomial $q_1$ is not identically zero,  $\gcd(q_1, {\bf S}_0)=1$ and  $\dim(\Pi_z \left( {\cal C}_ {\mathbb R} \right))=1$, then 
$$
\qquad \Pi_z \left({\cal C}_ {\mathbb R} \right)-  \text{ isolated points } =\left\{ (x,y) \in \mathbb{R}^2:
{{\bf S}_0}(x,y) = 0\right\} - \text{  isolated points. }
$$
\end{propst}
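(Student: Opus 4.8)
The plan is to use that, by the redefinition of the cutcurve just preceding the statement, one inclusion comes for free: writing $V := \{(x,y)\in\mathbb{R}^2 : {\bf S}_0(x,y)=0\}$, we already have $\Pi_z(\mathcal{C}_\mathbb{R})\subseteq V$. Hence all the content is in showing that, apart from finitely many points, every point of $V$ lies on the cutcurve; the phrase ``minus isolated points'' will then absorb the finite discrepancy. The engine is Proposition \ref{z0L}: a real point at which $\widehat{{\bf S}_0}$ vanishes while $sres_1$ does not is automatically on the cutcurve, the unique common root being the real number $-sres_{1,0}/sres_1$. Since ${\bf S}_0 \mid \widehat{{\bf S}_0}$, every point of $V$ satisfies $\widehat{{\bf S}_0}=0$, so it remains only to control the vanishing of $sres_1$ on $V$.

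First I would prove that the set $Z := \{(x,y)\in V : sres_1(x,y)=0\}$ is finite. By Lemma \ref{sres1}, $sres_1 = q_1\,(q_1^2+p_2-2q_0)$. The hypothesis gives $\gcd(q_1,{\bf S}_0)=1$, and Definition \ref{def:S_0 buena} together with the squarefreeness of $\widehat{{\bf S}_0}$ gives $\gcd({\bf S}_0,\,q_1^2+p_2-2q_0)=1$; indeed ${\bf S}_0$ is exactly the product of those irreducible factors of $\widehat{{\bf S}_0}$ that do not divide $q_1^2+p_2-2q_0$. As $\mathbb{R}[x,y]$ is a UFD, ${\bf S}_0$ is then coprime to the product $sres_1=q_1(q_1^2+p_2-2q_0)$, so ${\bf S}_0$ and $sres_1$ share no common component and their common real zero set $Z$ is finite. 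By Proposition \ref{z0L} this yields $V\setminus Z\subseteq\Pi_z(\mathcal{C}_\mathbb{R})$, and together with the free inclusion we conclude that $V$ and $\Pi_z(\mathcal{C}_\mathbb{R})$ differ only by a subset of the finite set $Z$.

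To upgrade this to the statement about non-isolated points, I would invoke that $\Pi_z(\mathcal{C}_\mathbb{R})$ is closed: $\mathcal{C}_\mathbb{R}=\mathcal{T}\cap\mathcal{Q}$ is a closed subset of the compact torus $\mathcal{T}$, hence compact, and its image under the continuous map $\Pi_z$ is compact, thus closed in $\mathbb{R}^2$. Let $p$ be a non-isolated point of $V$ and pick a neighbourhood $U$ of $p$ with $U\cap Z\subseteq\{p\}$. Every point of $(U\cap V)\setminus\{p\}$ then lies in $V\setminus Z\subseteq\Pi_z(\mathcal{C}_\mathbb{R})$, and since $p$ is non-isolated such points accumulate at $p$; closedness forces $p\in\Pi_z(\mathcal{C}_\mathbb{R})$, and the same accumulation shows $p$ is non-isolated there. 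Thus every non-isolated point of $V$ is a non-isolated point of the cutcurve, while the reverse inclusion is immediate from $\Pi_z(\mathcal{C}_\mathbb{R})\subseteq V$. The two sets of non-isolated points coincide, which is the asserted equality; the hypothesis $\dim(\Pi_z(\mathcal{C}_\mathbb{R}))=1$ ensures this common set is a genuine curve and not empty.

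The step I expect to be most delicate is not the finiteness of $Z$, which is routine once the double coprimality is established, but the topological bookkeeping around the points of $Z$: such a point may be non-isolated in $V$ — it can lie in the interior of a real branch, for instance at a transverse crossing of $V$ with the line $q_1=0$ — so $Z$ cannot simply be discarded set-theoretically. The closedness of $\Pi_z(\mathcal{C}_\mathbb{R})$, which rests on the compactness of the torus, is exactly what recovers each such point as a bona fide non-isolated point of the cutcurve, and I would take care to state it explicitly.
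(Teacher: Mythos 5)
Your proposal is correct and follows essentially the same route as the paper's proof: the free inclusion, finiteness of the locus where $sres_1$ and ${\bf S}_0$ both vanish via coprimality, lifting the remaining points by Proposition \ref{z0L}, and recovering the exceptional points by taking closures inside the (closed) cutcurve. You merely make explicit two details the paper leaves implicit — that ${\bf S}_0$ is also coprime to the conic factor $q_1^2+p_2-2q_0$ of $sres_1$ (a consequence of Definition \ref{def:S_0 buena} and squarefreeness), and that $\Pi_z(\mathcal{C}_\mathbb{R})$ is closed because $\mathcal{T}$ is compact — both of which are worthwhile clarifications.
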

\begin{proof}
Obviously, if $(x_0,y_0)$ is on the cutcurve $\Pi_z \left( {\cal C}_ {\mathbb R} \right)$, then ${\bf S}_0(x_0,y_0) = 0$. Moreover, if it is not isolated for the cutcurve, it is also not isolated for the curve defined by ${{\bf S}_0}$. Therefore, it just remains to prove that the rhs is contained in the lhs of the equality of sets.

Suppose that $(x_0,y_0)$ is a real point that satisfies $ {{\bf S}_0}(x_0,y_0) = 0$ and it is not an isolated point on the curve defined by $ {{\bf S}_0}$. Since $\gcd(q_1,{{\bf S}_0})=1$, there are finitely many points verifying $sres_1(x,y)=  {{\bf S}_0}(x,y)=0$, so there exists a one-dimensional neighbourhood $U$ of $(x_0,y_0)$ in the curve defined by $ {{\bf S}_0}$ where the only point annihilating $sres_1$ is, if that is the case, $(x_0,y_0)$ itself. Then, by Proposition \ref{z0L}, we have $$ \left(x,y,-\frac{sres_{1,0}(x,y)}{sres_1(x,y)}\right)\in {\cal T}\cap {\cal Q} $$ for all $(x,y)\in U-\{(x_0,y_0)\}$. Therefore, $U-\{(x_0,y_0)\}$ is contained in the cutcurve, and so is its closure, including $(x_0,y_0)$, which is a limit point of $U\subset\Pi_z \left( {\cal C}_ {\mathbb R} \right)$.
\end{proof}

Examples \ref{t9-elipse2}, \ref{toro-8} and  \ref{varios-5} illustrate the above proposition, where the curve defined by the resultant lies entirely within the region ${\cal A}_{{\cal T},{\cal Q}}$. 
   
\medskip
However, the fact that $q_1 \equiv 0$ is not sufficient to conclude whether the resultant is equal to the cutcurve or not. In Example \ref{curvareal}, with $q_1\equiv 0$, the resultant curve does not coincide with the cutcurve. In Example \ref{esfera}, also with $q_1 \equiv 0$, the cutcurve coincides with the resultant.

 \begin{cor}\label{cor:parte unidimensional}
If $q_1$ is not identically zero and $\dim(\Pi_z \left( {\cal C}_ {\mathbb R} \right))=1$, then we have one of the following possibilities:

\begin{enumerate}
\item $q_1\in \R^*$ or $q_1$ does not divide ${\bf S}_0$, and $\Pi_z \left({\cal C}_ {\mathbb R} \right)-$ isolated points

\hspace*{4 cm}$\,=\left\{ (x,y) \in \mathbb{R}^2:  {\bf S}_0(x,y) = 0\right\}-$  isolated points
\item  $\deg(q_1)=1$ and $q_1$ divides ${\bf S}_0$, and $\Pi_z \left({\cal C}_ {\mathbb R} \right)-$ isolated points

\hspace*{0.8 cm} $\,=\left\{ (x,y) \in \mathbb{R}^2:  {\bf S}_0 = 0\ne q_1\right\} \cup \left\{ (x,y) \in {\cal A}_{{\cal T},{\cal Q}}  :  q_1(x,y) = 0 \right\} -$ isolated points.
\end{enumerate}
\end{cor}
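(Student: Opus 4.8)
The plan is to show that the two listed alternatives are \emph{exhaustive} and that each reduces to machinery already available. Since $q_1=ex+fy+i$ has total degree at most one and is not identically zero, it is either a nonzero constant ($q_1\in\R^*$) or an irreducible polynomial of degree one. The first thing I would record is that the hypothesis of alternative (1), namely ``$q_1\in\R^*$ or $q_1$ does not divide ${\bf S}_0$'', is \emph{equivalent} to $\gcd(q_1,{\bf S}_0)=1$: if $q_1\in\R^*$ it is a unit, while if $\deg q_1=1$ then $q_1$ is irreducible, so $\gcd(q_1,{\bf S}_0)=1$ exactly when $q_1\nmid{\bf S}_0$. Consequently alternative (1) is precisely the situation covered by Proposition~\ref{resultant_es_cutc}, whose three hypotheses ($q_1\not\equiv0$, $\gcd(q_1,{\bf S}_0)=1$, $\dim\Pi_z({\cal C}_\R)=1$) are all met, and the conclusion is immediate. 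Alternative (2), $\deg q_1=1$ together with $q_1\mid{\bf S}_0$, is exactly the complementary case, so the two alternatives cover everything.

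The real work is alternative (2). Since ${\bf S}_0$ is squarefree (it divides the squarefree polynomial $\widehat{{\bf S}_0}$) and $q_1$ is irreducible and divides it, I would write ${\bf S}_0=q_1\,g$ with $\gcd(q_1,g)=1$, so that the real curve splits as $\{{\bf S}_0=0\}=\{q_1=0\}\cup\{g=0\}$. Recalling that after Definition~\ref{def:S_0 buena} the cutcurve is $\Pi_z({\cal C}_\R)=\{{\bf S}_0=0\}\cap{\cal A}_{{\cal T},{\cal Q}}$, the inclusion ``$\subseteq$'' of the asserted identity is immediate: a cutcurve point either has $q_1=0$, placing it in $\{q_1=0\}\cap{\cal A}_{{\cal T},{\cal Q}}$, or has $q_1\neq0$, forcing $g=0$ and placing it in $\{{\bf S}_0=0\neq q_1\}$. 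Thus the whole cutcurve is contained in the right-hand set, and in particular every non-isolated cutcurve point is non-isolated in the right-hand set.

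For the reverse inclusion I would treat the two pieces of the right-hand set separately. The line piece $\{q_1=0\}\cap{\cal A}_{{\cal T},{\cal Q}}$ lies in the cutcurve verbatim, because $q_1\mid{\bf S}_0$ forces ${\bf S}_0=0$ on $\{q_1=0\}$, and these points already lie in ${\cal A}_{{\cal T},{\cal Q}}$. For the piece $\{{\bf S}_0=0\neq q_1\}=\{g=0,\ q_1\neq0\}$, I would invoke the lifting of Proposition~\ref{z0L}. On the open set $\{q_1\neq0\}$ one has $sres_1=q_1(q_1^2+p_2-2q_0)$, so $sres_1$ vanishes there only along the conic $p_2+q_1^2-2q_0=0$; since $\gcd({\bf S}_0,\,p_2+q_1^2-2q_0)=1$ by the very construction in Definition~\ref{def:S_0 buena} (any common factor would divide $\widehat{{\bf S}_0}$ and the conic, hence the factor removed in that definition), this conic meets $\{g=0\}$ in only finitely many points. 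Off this finite set every point of $\{g=0,\ q_1\neq0\}$ has $sres_1\neq0$ and ${\bf S}_0=0$ (hence $\widehat{{\bf S}_0}=0$), so Proposition~\ref{z0L} lifts it to a unique real point of ${\cal C}_\R$ and certifies it as a genuine cutcurve point.

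The step I expect to be the main obstacle is disposing of those finitely many exceptional points where $sres_1$ vanishes off the line $q_1=0$, together with the bookkeeping of ``isolated points''. Here I would use the positivity fact extracted from the proof of Lemma~\ref{sres1q10}: $p_2+q_1^2-2q_0>0$ at every point with $\Delta_{{\cal Q}}\geq0$, hence everywhere on ${\cal A}_{{\cal T},{\cal Q}}$. Consequently a point of $\{{\bf S}_0=0\}$ lying on the conic $p_2+q_1^2-2q_0=0$ cannot lie in ${\cal A}_{{\cal T},{\cal Q}}$; were such a point non-isolated on $\{{\bf S}_0=0\}$ with $q_1\neq0$, its neighbouring curve points would, by the previous paragraph, be cutcurve points, and closedness of $\{{\bf S}_0=0\}\cap{\cal A}_{{\cal T},{\cal Q}}$ would force the limit point into ${\cal A}_{{\cal T},{\cal Q}}$, a contradiction. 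Hence these exceptional points are isolated on $\{{\bf S}_0=0\}$, so they are isolated in the right-hand set and are removed by the ``$-$ isolated points'' operation. Since ${\bf S}_0$ is squarefree, $\{{\bf S}_0=0\}$ has only finitely many isolated real points; both sides are therefore one-dimensional semialgebraic sets agreeing outside a finite set, which together with the inclusions above yields the claimed equality up to isolated points.
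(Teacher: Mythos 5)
The paper states this as a corollary of Proposition \ref{resultant_es_cutc} without giving a proof, and your argument is exactly the intended one: case (1) is the observation that ``$q_1\in\R^*$ or $q_1\nmid{\bf S}_0$'' is equivalent to $\gcd(q_1,{\bf S}_0)=1$, so Proposition \ref{resultant_es_cutc} applies verbatim, while case (2) splits ${\bf S}_0=q_1g$ and lifts the two pieces via Theorem \ref{projectioncurve} and Proposition \ref{z0L}. Your proof is correct and carefully fills in the details the paper leaves implicit (in particular the disposal, via the positivity of $p_2+q_1^2-2q_0$ on ${\cal A}_{{\cal T},{\cal Q}}$, of the finitely many points where $sres_1$ vanishes off the line $q_1=0$).
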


In the next section, we will discuss about the isolated points and  we will specify the meaning of $q_1$ dividing ${\bf S}_0$ in Proposition \ref{multiple_components}.

\section{Characterizing singular points} \label{lifting1}
In this section, we present different results that allow us to detect all the singularities of $\mathcal{C}_\mathbb{R}$ from the information provided by the cutcurve. It would be ideal if there was a bijective mapping between singularities of the intersection curve and those of the cutcurve. But as one can easily see, the projection $\Pi_z$ does not have to be injective, so there may be singularities on the cutcurve that are  projections of regular points; and viceversa, projections of singularities on the intersection curve in the plane $z=0$ may be regular points on the cutcurve. This makes the identification of singularities in the intersection curve more challenging than one might initially expect.

 \medskip

We first introduce some definitions which play an important role in this section.
\begin{defin}
A singular point of a real plane curve is defined as a point where the partial derivatives of the square-free polynomial that defines such a curve vanish.
\end{defin}

\begin{defin} Given two plane algebraic curves $\mathcal{F}$ and $\mathcal{G}$, we say that they intersect transversally at a point $P$ if $P$ is a regular point of $\mathcal{F}$ and of $\mathcal{G}$, and if the tangent lines to $\mathcal{F}$ and $\mathcal{G}$ at $P$ are different.
\end{defin}

If the curves are defined by $f$ and $g$, then it is well known that the curves intersect transversally at $P$ if and only if the Jacobian determinant $\frac{\delta(f,g)}{\delta(x,y)}$ does not vanish at the point $P$ (see \cite{Kunz}).

\begin{defin}
Given a real space curve $\cal C$, let $I(\cal C)$ be the ideal of the curve given by $I({\cal C})= \{f\in \mathbb R[x,y,z] / f({\cal C})=0\}$. Then a point $(x_0,y_0,z_0)\in \cal C$ is singular if the rank of the Jacobian matrix of a generator system of $I(\cal C)$ is 1 or 0 after specialization at the point. If the rank is equal to 2, the point is said regular or smooth.
 \end{defin}

In our case, ${\cal C}=\mathcal{C}_\mathbb{R}$ and although we do not have the ideal of the curve, it is well known that a point on $\cal C$ where the rank of the Jacobian matrix of $T$ and $Q$ is equal to 2 is nonsingular (see for example \cite{CLO}).



To facilitate understanding of this section, we have summarized the results in the following diagram, which illustrates the different cases that arise when determining whether $(x_0, y_0)\in \Pi_z \left(\mathcal{C}_\mathbb{R}\right )$ is the projection of a singularity of $\mathcal{C}_\mathbb{R}$. 
\begin{center}
\begin{tikzpicture}[
    level 1/.style={sibling distance=8cm},
    level 2/.style={sibling distance=6cm},
    level 3/.style={sibling distance=6cm},
    every node/.style={rectangle, rounded corners, draw, align=center, text width=4cm, fill=blue!10},
    edge from parent/.style={draw, ->, thick},
    edge from parent path={(\tikzparentnode.south) -- ++(0,-4pt) -| (\tikzchildnode.north)}
]

\node[text width=2cm]{$q_1\equiv 0$ 
}
    child {node {$\Delta_{\cal T}(x_0,y_0)\ne 0$} 
        child {node[text width=6cm] {$(x_0,y_0)$ is a singular point  iff $(x_0,y_0,\pm\sqrt{-q_0(x_0,y_0)})$ are both singular points. \\Theorem \ref{singarribaabajo} and Proposition \ref{Singq_1Siemprecero_noCorona}}
        } 
      }
    child {node {$\Delta_{\cal T}(x_0,y_0)=0$}
        child {node[text width=7cm] {$\Delta_Q$ and ${\bf S_0}$ intersect non-transversally at $(x_0,y_0)$ and $\Delta_{\cal Q}$ is not a component of ${\bf S_0}$ iff $(x_0,y_0,0)$ is a singular point.\\Theorem \ref{Singq_1Siemprecero_enCorona}}
        }
     };

\end{tikzpicture}
\end{center}

\bigskip
\begin{center}
\begin{tikzpicture}[
    level 1/.style={sibling distance=8cm},
    level 2/.style={sibling distance=4cm},
    level 3/.style={sibling distance=4cm},
    every node/.style={rectangle, rounded corners, draw, align=center, text width=4cm, fill=red!10},
    edge from parent/.style={draw, ->, thick},
    edge from parent path={(\tikzparentnode.south) -- ++(0,-4pt) -| (\tikzchildnode.north)}
]

\node[text width=3cm] { $q_1(x_0,y_0)\ne 0$
}
 child {node {$\Delta_{\cal T}(x_0,y_0)\ne 0$} 
        child {node[text width=5cm] { $(x_0,y_0)$ is singular iff $(x_0,y_0,z_0)$ is singular\\Theorem \ref{singarribaabajo} and Proposition \ref{Singq_1nocero_noCorona}}
        } 
      }
    child {node {$\Delta_{\cal T}(x_0,y_0)=0$}
        child {node[text width=7cm] {$(x_0,y_0)$ and $(x_0,y_0,0)$ are both regular. \\Proposition \ref{Singq_1nocero_noCorona} }
        }
     };
    
    \end{tikzpicture}
\end{center}

 \bigskip

\begin{center}
\begin{tikzpicture}[
    level 1/.style={sibling distance=8.4cm},
    level 2/.style={sibling distance=5.5cm},
    level 3/.style={sibling distance=5.8cm},
    every node/.style={rectangle, rounded corners, draw, align=center, text width=4cm, fill=green!10},
    edge from parent/.style={draw, ->, thick},
    edge from parent path={(\tikzparentnode.south) -- ++(0,-2pt) -| (\tikzchildnode.north)}
]

\node[text width=5.5cm] {$q_1\not\equiv 0$ with $q_1(x_0,y_0)= 0$
}
    child {node {$q_1|{\bf S_0}$} 
        child {node[text width=5.5cm] { ${\bf S_0}/q_1$ vanishes at $(x_0,y_0)$ (so,  
        $(x_0,y_0)$ singular) iff one of $(x_0,y_0,\pm\sqrt{-q_0(x_0,y_0)})$ is singular.\\Proposition \ref{multiple_components}, Remarks \ref{2bi} and \ref{coverticalidad}}
        } 
      }
    child {node {$q_1\nmid \;{\bf S_0}$}
        child{node [text width=3cm]{$\Delta_{\cal T}(x_0,y_0)\ne 0$ and $(x_0,y_0)$ is singular}   
        		child {node[text width=5cm] {Both $(x_0,y_0,\pm\sqrt{-q_0})$ regular,\\ or one singular and the other regular.\\Theorem \ref{singarribaabajo} and Remark \ref{coverticalidad}}
		}
        }
        child {node[text width=3cm] {$\Delta_{\cal T}(x_0,y_0)=0$}
            	child {node [text width=5.3cm] {$(x_0,y_0,0)$ singular iff $(x_0,y_0)$ is a non-transversal intersection point of $\Delta_{\cal T}(x_0,y_0)$ and $\Delta_{{\cal Q}}(x_0,y_0)$.
	\\ Theorem \ref{cortangent2} and Corolary \ref{porfin}
	}            
        }
    }
    };
    
    \end{tikzpicture}
\end{center}
 


\newpage

In the previous diagram, we do not differentiate the case where the point on the cutcurve is isolated. In fact, if $(x_0,y_0)$ is an isolated point of the cutcurve, not necessarily isolated of the resultant curve, then it is the projection of an isolated (so singular) point of the intersection curve. Viceversa, if we have an isolated point on the intersection curve $(x_0,y_0,z_0)$ with $z_0\neq 0$, then $(x_0,y_0)$ is singular. However, if $z_0=0$, the cutcurve may be regular at $(x_0,y_0)$, because the cutcurve is semialgebraic while the intersection curve is algebraic (see Example \ref{toro9-2sheet}). In any case, this makes no difference when applying our results. 

On the other hand, if the quadric $\mathcal{Q}$ is a cone and its singular point, denoted by $P$,  lies on $\mathcal{C}_\mathbb{R}$, then $P$ is a singularity of the curve $\mathcal{T}\cap\mathcal{Q}$ by the next proposition. Moreover, if $P$ is not in the plane $z=0$, then its projection is a singular point of the cutcurve as well by Theorem \ref{singarribaabajo}; otherwise, Theorems \ref{Singq_1Siemprecero_enCorona} and  \ref{cortangent2} are applied (see Example \ref{cono}).

\begin{propst}\label{singularidadcono}
If $\mathcal{Q}$ is a real cone whose vertex $P$ lays on the torus, then the intersection curve $\Cr$ is singular at $P$.
\end{propst}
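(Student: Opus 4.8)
The plan is to apply the Jacobian criterion recalled earlier in this section: a point of $\mathcal{C}_\mathbb{R}$ at which the rank of the Jacobian matrix of $T$ and $Q$ equals $2$ is nonsingular, so it suffices to prove that at $P$ this rank is strictly smaller than $2$.

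First I would recall the standard algebraic description of a real cone. Writing $Q$ in affine-quadratic form $Q(\mathbf{x})=\mathbf{x}^{T}A\,\mathbf{x}+2\mathbf{b}^{T}\mathbf{x}+c$ with $\mathbf{x}=(x,y,z)$ and $A$ the symmetric matrix of second-order terms, a cone is characterized by its $4\times 4$ symmetric matrix having rank $3$, and its vertex $P$ is the unique point spanning the kernel of that matrix. From this one obtains $\nabla Q(P)=2\bigl(A\,P+\mathbf{b}\bigr)=0$, and moreover $Q(P)=0$, since the vertex lies on the cone. Thus the vertex is precisely the singular point of the quadric $\mathcal{Q}$.

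Next I would check that $P\in\mathcal{C}_\mathbb{R}$. By hypothesis $P$ lies on the torus, so $T(P)=0$; and as the vertex of $\mathcal{Q}$ we have just seen that $Q(P)=0$. Hence $P$ is a real common zero of $T$ and $Q$, that is, $P\in\mathcal{C}_\mathbb{R}$. The core of the argument is then immediate: the Jacobian matrix
\[
\begin{pmatrix} T_x & T_y & T_z \\ Q_x & Q_y & Q_z \end{pmatrix}
\]
evaluated at $P$ has its entire second row equal to zero, because $\nabla Q(P)=0$. Therefore its rank is at most $1<2$, and by the cited criterion $P$ is a singular point of $\mathcal{C}_\mathbb{R}$.

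The only step requiring genuine care — rather than being a one-line consequence of the Jacobian criterion — is the first one, namely establishing rigorously that $\nabla Q$ vanishes at the vertex of the cone; this rests entirely on the rank-$3$ characterization of a cone and the fact that the vertex spans the kernel of its symmetric matrix. Everything else follows formally once this is in place. In particular, no property specific to the torus is used beyond $T(P)=0$, so the same argument shows more generally that the intersection of $\mathcal{Q}$ with any surface passing through the vertex is singular there.
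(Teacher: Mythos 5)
There is a genuine gap. Your argument shows that the Jacobian matrix of $T$ and $Q$ has rank at most $1$ at $P$, but the paper's definition of a singular point of $\Cr$ is in terms of the Jacobian of a \emph{generator system of the ideal $I(\Cr)$}, and the criterion recalled in the paper only goes one way: rank $2$ for the Jacobian of $T,Q$ implies smooth. Rank $\le 1$ does \emph{not} imply singular, because $\langle T,Q\rangle$ need not generate $I(\Cr)$ — precisely when the two surfaces are tangent along a component, the gradient condition fails along that whole component while the reduced curve can be perfectly smooth there. The paper points this out explicitly right after Corollary \ref{cortangent1}, and Example \ref{ej_toro_cono_vert} (torus and coaxial cone tangent along a parallel circle) realizes it. Your closing claim that the same argument works for \emph{any} surface through the vertex is in fact false and exposes the gap: take the cone $Q=x^2+y^2-z^2$ and the plane $F=x-z$ tangent along a generatrix; then $Q=F\cdot(x+z)+y^2$, the intersection is the double line $y=0,\ x=z$, whose reduced locus is a smooth line through the vertex, even though $\nabla Q$ vanishes there.

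This is exactly why the paper's proof is as long as it is: essentially all of its effort goes into excluding the scenario in which $P$ lies only on a multiple (tangential) component of $\mathcal{T}\cap\mathcal{Q}$. It shows that a generically tangential intersection would force a triple component that must be an irreducible conic (the torus contains no real lines), that any multiple component through a cone's vertex would likewise have to be an irreducible conic, and that a cone contains no irreducible conic through its vertex — so $P$ lies on a simple component, where the intersection is generically transversal and the vanishing of $\nabla Q(P)$ genuinely forces a singularity. To repair your proof you would need to supply this exclusion step; the rank computation alone does not suffice.
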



\begin{proof}

If $P $ were an isolated point on $\Cr$, then it would be singular. For the purpose of the following discussion, let’s assume it is not. 

First, note that since the cone is 3-dimensional, a general generatrix $G$ passing through $P$ is transversal to the tangent plane to the torus at $P$, and then to the torus itself. Therefore, among the four intersection points (counted with multiplicity) of $G\cap\mathcal{T}$, just one corresponds to $P$.

Under these conditions, suppose  that $\cal{T}$ and $\cal{Q}$  meet tangentially along all $\cal{C}_\mathbb{R}$.   Then the remaining 3 points in $G\cap\mathcal{T}$ would coincide and, given the odd multiplicities, be real.
%
%
%
%
%
Now 
consider a linear parametrization $\varphi(s,t)$ of the cone, where the generatrices are images of vertical lines, the vertex $P$ corresponds to the horizontal axis and the other horizontal lines parametrize ellipses through the stereographical projection. Then, the composition of the equation of the torus and $\varphi$ gives an equation on $s,t$ of type $tp(s,t)=\alpha(s)t^4+\beta(s)t^3+\gamma(s)t^2+\delta(s)t$. Since, for a general $s_0$, the restriction must factor as $\left(\sqrt[3]{\alpha(s_0)}t+\sqrt[3]{\delta(s_0)}\right)^3t$, both the resultant and first subresultant of $p$ and $p_t$ wrt the variable $t$ are generically, so identically zero. Thus, $p(s,t)$ is a cube.
This means that the intersection is given by a triple component corresponding to $p(s,t)$ and $P$ corresponding to $t=0$. Since $P$ is not isolated, the triple component must contain $P$. 
 
Due to the degree constrains, such component can only be an irreducible conic, as it is well known that the torus does not contain real lines. However, since a cone does not contain irreducible conics passing through the vertex, we have a contradiction, proving that the intersection of a torus and a cone through its vertex cannot be generically tangential.

Therefore, the intersection must have a simple real component. Under these conditions, the multiple components have degree at most 3. Given that the torus is bounded, this means that such components must be conics. Since, again, a cone does not contain irreducible conics passing through the vertex, we conclude that such vertex lays exclusively on simple components of the intersection. Since the intersection is generically transversal on simple components, the zero gradient of the cone at the vertex provides that $P$ is singular of $\mathcal{C}_\mathbb{R}$.
\end{proof}

In the remaining part of the section, 
we will need the following lemma, derived from \cite[Lemma 5.1]{Mourrain2009} applied to the affine setting to $T$ and $Q$, which characterizes the derivatives of the resultant $\widetilde{{{\bf S}_0}}$.

\begin{lem}\label{singular20}
We have
$$\frac{\partial {\widetilde{{\bf S}_0}}}{{\partial x}}\left( {{x },{y }} \right) =
sres_1(x ,y )\cdot
{\left| {\begin{array}{cc}
T_x(x ,y ,z )&T_z(x ,y ,z )\\
Q_x(x ,y ,z)&Q_z(x ,y ,z )
\end{array}} \right|} \, \mod(T,Q),$$
and
$$\frac{\partial {\widetilde{{\bf S}_0}}}{{\partial y}}\left( x,y \right) =
sres_1(x,y)\cdot
{\left| {\begin{array}{cc}
T_y(x,y,z)&T_z(x,y,z)\\
Q_y(x,y,z)&Q_z(x,y,z)
\end{array}} \right|}  \mod(T,Q)$$
\end{lem}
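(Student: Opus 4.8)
The plan is to handle $\widetilde{{\bf S}_0}=\mathrm{Res}_z(T,Q)$ through the Poisson product formula \eqref{fact}, namely $\widetilde{{\bf S}_0}=T(x,y,z_1)\,T(x,y,z_2)$ with $z_1,z_2$ the roots of $Q$ in $z$, and to compute its $x$-derivative by implicit differentiation, while reducing the proposed right-hand side $sres_1\cdot J$ (write $J:=T_xQ_z-T_zQ_x$ for the stated $2\times 2$ determinant) \emph{directly} modulo the ideal $(T,Q)$. I will show that both quantities collapse to one and the same polynomial in $\mathbb{R}[x,y]$, which yields the asserted congruence; the $y$-statement is identical with $x$ replaced by $y$ and $J$ replaced by $T_yQ_z-T_zQ_y$. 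An equivalent route is to start from a cofactor identity $\widetilde{{\bf S}_0}=U_0T+V_0Q$, differentiate, and discard the terms $U_{0,x}T$ and $V_{0,x}Q$ that lie in $(T,Q)$; this is how \cite[Lemma 5.1]{Mourrain2009} proceeds, but the root computation is shorter here because $Q$ is monic of degree two.

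For the left-hand side, differentiating $\widetilde{{\bf S}_0}=T(z_1)T(z_2)$ and using $Q(x,y,z_k)=0$ to get $\partial_xz_k=-Q_x(z_k)/Q_z(z_k)$ gives $\partial_x T(z_k)=J(z_k)/Q_z(z_k)$. Since $z_1+z_2=-q_1$, one has $Q_z(z_1)=2z_1+q_1=z_1-z_2=-Q_z(z_2)$, so after clearing denominators
\[
\frac{\partial\widetilde{{\bf S}_0}}{\partial x}=\frac{J(z_1)\,T(z_2)-T(z_1)\,J(z_2)}{z_1-z_2}.
\]
The essential algebraic input now is the remainder of $T$ on division by the monic $Q$: writing $T=QW+(\rho z+\sigma)$, a short computation matching Lemma \ref{sres1} gives $\rho=-sres_1$ and $\sigma=-sres_{1,0}$, i.e. $T\equiv-(sres_1\,z+sres_{1,0})\pmod{Q}$, hence $T(z_k)=-(sres_1z_k+sres_{1,0})$. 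Writing likewise $J\equiv j_1z+j_0\pmod{Q}$ with $j_1,j_0\in\mathbb{R}[x,y]$, so that $J(z_k)=j_1z_k+j_0$, the bilinear numerator telescopes: the $z_1z_2$ and the constant terms cancel, leaving $(z_1-z_2)\bigl(sres_1\,j_0-sres_{1,0}\,j_1\bigr)$. Thus $\partial_x\widetilde{{\bf S}_0}=sres_1\,j_0-sres_{1,0}\,j_1$ as a genuine identity in $\mathbb{R}[x,y]$ (valid generically where $z_1\neq z_2$, hence everywhere by density).

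For the right-hand side I will reduce $sres_1\cdot J$ inside $\mathbb{R}[x,y,z]/(T,Q)$. First reduce $J$ modulo $Q$ to $J\equiv j_1z+j_0$; next note that the same division shows $sres_1\,z+sres_{1,0}=QW-T\in(T,Q)$, that is $sres_1\,z\equiv-sres_{1,0}\pmod{(T,Q)}$, an \emph{exact} ideal relation. Multiplying, $sres_1 J\equiv j_1(sres_1 z)+sres_1 j_0\equiv sres_1 j_0-sres_{1,0}j_1\pmod{(T,Q)}$, which is precisely the polynomial obtained for the left-hand side, so the two sides are congruent. The obstacle worth flagging is conceptual rather than computational: the claim is an ideal congruence in $\mathbb{R}[x,y,z]$, not merely an equality on the variety $\{T=Q=0\}$ (over $\mathbb{R}(x,y)$ the ideal $(T,Q)$ is the unit ideal, so a pointwise check at the common roots would be vacuous). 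This is exactly why the explicit scalar relation $sres_1z\equiv-sres_{1,0}$, rather than a generic evaluation at the roots, must be used to land the reduction inside $\mathbb{R}[x,y]$; keeping the two computations (implicit differentiation versus ideal reduction) rigorously separate and reconciling them only through the common polynomial $sres_1j_0-sres_{1,0}j_1$ is the delicate point.
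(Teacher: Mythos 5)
Your proof is correct, and it is genuinely different from what the paper does: the paper offers no self-contained argument at all, but simply invokes \cite[Lemma 5.1]{Mourrain2009} (Bus\'e--Mourrain's general factorization result for derivatives of resultants, obtained there from a cofactor identity $\widetilde{{\bf S}_0}=U_0T+V_0Q$) specialized to the affine pair $T,Q$. You instead exploit the very special shape of the present situation --- $Q$ monic quadratic in $z$ --- to give an explicit verification: the Poisson factorization \eqref{fact} plus implicit differentiation yields $\partial_x\widetilde{{\bf S}_0}=\bigl(J(z_1)T(z_2)-T(z_1)J(z_2)\bigr)/(z_1-z_2)$, and the Euclidean division $T\equiv-(sres_1\,z+sres_{1,0})\pmod Q$ (which I checked: the remainder of $z^4+p_2z^2+p_0$ by $z^2+q_1z+q_0$ is exactly $-q_1(q_1^2+p_2-2q_0)z-(q_0q_1^2+p_2q_0-q_0^2-p_0)$, matching Lemma \ref{sres1}) collapses both sides to the same polynomial $sres_1\,j_0-sres_{1,0}\,j_1\in\mathbb{R}[x,y]$. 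Your reduction of the right-hand side via the exact ideal relation $sres_1\,z+sres_{1,0}\in(T,Q)$ is the right way to make the congruence an honest statement in $\mathbb{R}[x,y,z]$ rather than a vacuous one over $\mathbb{R}(x,y)$, and your remark that the generic computation (where $z_1\neq z_2$) extends by density is the only point where care is needed; it is handled adequately. What each approach buys: the citation route is shorter and works for arbitrary degrees, while your computation is elementary, self-contained, and produces as a by-product the explicit value $sres_1\,j_0-sres_{1,0}\,j_1$ of the derivative, which could be useful elsewhere in the paper; it would not scale gracefully if $Q$ had higher degree in $z$.
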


\medskip
The following two results are an immediate consequences of the above lemma. Observe that, in the following corollary, the hypotheses imply that there is no vertical tangency.

\begin{cor}\label{cortangent1} Let $(x_0,y_0,z_0)\in \mathcal{C}_\mathbb{R}$ with $q_1(x_0,y_0)\neq 0$. Then, the Jacobian matrix of $T$ and $Q$ at $(x_0,y_0,z_0)$ has rank one if and only if the partial derivatives of $\widetilde{{\bf S}_0}$ vanish at $(x_0,y_0)$.
\end{cor}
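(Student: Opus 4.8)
The plan is to translate the rank condition on the $2\times 3$ Jacobian of $T$ and $Q$ into the vanishing of $2\times 2$ minors, and to link those minors to the partial derivatives of $\widetilde{{\bf S}_0}$ via Lemma \ref{singular20}. First I would observe that, since $(x_0,y_0,z_0)\in\mathcal{C}_\mathbb{R}$, its projection $(x_0,y_0)$ is on the cutcurve, so the hypothesis $q_1(x_0,y_0)\neq 0$ together with Proposition \ref{q1sres1} yields $sres_1(x_0,y_0)\neq 0$. Moreover $T(x_0,y_0,z_0)=Q(x_0,y_0,z_0)=0$, so the congruences modulo $(T,Q)$ in Lemma \ref{singular20} become equalities after specializing at the point; concretely, $\partial\widetilde{{\bf S}_0}/\partial x$ equals $sres_1(x_0,y_0)\,(T_xQ_z-T_zQ_x)$ and $\partial\widetilde{{\bf S}_0}/\partial y$ equals $sres_1(x_0,y_0)\,(T_yQ_z-T_zQ_y)$, all partials evaluated at $(x_0,y_0,z_0)$.

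The forward implication is then immediate: if the Jacobian has rank one, all three $2\times 2$ minors vanish, in particular $T_xQ_z-T_zQ_x$ and $T_yQ_z-T_zQ_y$; since $sres_1(x_0,y_0)$ is a nonzero scalar, both partial derivatives of $\widetilde{{\bf S}_0}$ vanish at $(x_0,y_0)$.

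For the converse I would start from the vanishing of both partials, which by the above formulas and $sres_1(x_0,y_0)\neq 0$ forces $T_xQ_z-T_zQ_x=0$ and $T_yQ_z-T_zQ_y=0$. The substantive step is to deduce the third minor $T_xQ_y-T_yQ_x=0$, which, combined with the first two, gives rank at most one. Here I would use the absence of vertical tangency flagged before the statement: since $T_z=2z(2z^2+p_2)$ with $p_2=2(R^2-r^2+x^2+y^2)>0$, the equation $T_z=0$ forces $z_0=0$ and hence $Q_z=2z_0+q_1=q_1(x_0,y_0)\neq 0$, so $T_z$ and $Q_z$ cannot vanish simultaneously. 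A short case split then finishes: if $Q_z\neq 0$, solve the two vanishing minors for $T_x,T_y$ in terms of $Q_x,Q_y$ and substitute into $T_xQ_y-T_yQ_x$, which collapses to $0$; if instead $T_z\neq 0$, argue symmetrically solving for $Q_x,Q_y$. Finally, because at least one of $T_z,Q_z$ is nonzero, not every entry of the Jacobian vanishes, so its rank is exactly one rather than zero.

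I expect the main obstacle to be precisely this passage from two vanishing minors to the third, as no purely formal identity forces it: the conclusion rests on the specific structure of $T$ (through $p_2>0$) and the hypothesis $q_1(x_0,y_0)\neq 0$, which together exclude a common vertical tangent and thereby supply a nonzero pivot among $T_z,Q_z$. I would therefore isolate and prove the no-vertical-tangency fact first, since it is exactly what makes the hypotheses of the corollary do their work.
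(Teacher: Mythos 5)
Your proof is correct and follows essentially the same route as the paper: both rely on Lemma \ref{singular20} together with the fact that $sres_1(x_0,y_0)\neq 0$ and the observation that $T_z$ and $Q_z$ cannot vanish simultaneously when $q_1(x_0,y_0)\neq 0$. You merely spell out more explicitly the step the paper leaves implicit, namely that two vanishing minors plus a nonzero pivot among $T_z,Q_z$ force the third minor to vanish and pin the rank at exactly one.
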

  \begin{proof}
One direction of the proof is obvious.

To prove the converse, let's see that if the partial derivatives of $\widetilde{{\bf S}_0}$ vanish at $(x_0,y_0)$, then the Jacobian matrix has rank 1. Since ${T_z}\left( {{x_0},{y_0},{z_0}} \right)=2z_0(2z_0^2 + p_2)$ with $p_2>0$ and ${{Q_z}\left( {{x_0},{y_0},{z_0}} \right)}=q_1 + 2z_0$, by the hypotheses these two derivatives do not vanish simultaneously, that is, there is no vertical tangency. Then, by applying Lemma \ref{singular20}, where the partial derivatives of $\widetilde{{\bf S}_0}$ are multiples of two maximal minors of $J$, we can deduce that the Jacobian matrix evaluated at $( {{x_0},{y_0},{z_0}})$ must have rank equal to one.
%
%

%
%
%
 \end{proof}

Note that the fact of having the Jacobian matrix with rank 1 and derivatives of $\widetilde{{\bf S}_0}$ zero, does not imply the existence of singularities. See for instance Example \ref{ej_toro_cono_vert}, where  $q_1$ is a nonzero constant and  there is tangency along a component of $\mathcal{C}_\mathbb{R}$. The rank of the Jacobian matrix is 1 throughout the entire component, and the derivative of $\widetilde{{\bf S}_0}$ is zero on its projection.
 
The following result allows to compute singularities of the cutcurve in a simple way if the resultant is squarefree.
 	
\begin{cor}\label{q10S}
Let $(x_0,y_0)$ be a point on the cutcurve. If $q_1(x_0,y_0)=0$, then
$$
\frac{{\partial {\widetilde{{\bf S}_0}}}}{{\partial x}}\left( {{x_0},{y_0}} \right) =0 ,\qquad  \frac{{\partial {\widetilde{{\bf S}_0}}}}{{\partial y}}\left( x_0,y_0 \right) = 0 .
$$
In particular, if the resultant polynomial is squarefree,  or $(x_0,y_0)$ is in one simple component of the resultant polynomial, then the point $(x_0,y_0)$ is a singular point on the cutcurve.
\end{cor}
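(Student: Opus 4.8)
The plan is to obtain the first claim directly from Lemma \ref{singular20}, once we know that $sres_1$ vanishes at $(x_0,y_0)$. Since $(x_0,y_0)$ lies on the cutcurve, there is a real $z_0$ with $(x_0,y_0,z_0)\in\mathcal{C}_\mathbb{R}$, so that $T(x_0,y_0,z_0)=Q(x_0,y_0,z_0)=0$. Being on the cutcurve with $q_1(x_0,y_0)=0$, Proposition \ref{q1sres1} gives $sres_1(x_0,y_0)=0$. I would then use that the identities in Lemma \ref{singular20} hold modulo $(T,Q)$: the polynomial difference between $\partial\widetilde{{\bf S}_0}/\partial x$ and $sres_1$ times the displayed $2\times2$ minor $M_x$ belongs to the ideal $(T,Q)$, and therefore vanishes when evaluated at the intersection point $(x_0,y_0,z_0)$. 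This turns the congruence into the honest equality
\[
\frac{\partial\widetilde{{\bf S}_0}}{\partial x}(x_0,y_0)=sres_1(x_0,y_0)\cdot M_x(x_0,y_0,z_0)=0,
\]
and the analogous computation with the $y$-minor gives $\partial\widetilde{{\bf S}_0}/\partial y\,(x_0,y_0)=0$. The actual value of the minor is irrelevant, as it is multiplied by zero; in particular, and in contrast with Corollary \ref{cortangent1}, no control of the vertical tangency is needed.

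For the ``in particular'' clause I would transfer the vanishing of the partials of $\widetilde{{\bf S}_0}$ to the squarefree polynomial that genuinely defines the cutcurve. If $\widetilde{{\bf S}_0}$ is squarefree, I factor $\widetilde{{\bf S}_0}=\widehat{{\bf S}_0}={\bf S}_0\cdot c$ with $c=\gcd(\widehat{{\bf S}_0},\,p_2+q_1^2-2q_0)$. By the argument in the proof of Lemma \ref{sres1q10}, the conic $p_2+q_1^2-2q_0$ is strictly positive along the whole cutcurve, so $c(x_0,y_0)\neq0$. Differentiating this factorization and using ${\bf S}_0(x_0,y_0)=0$ together with the vanishing of the partials of $\widetilde{{\bf S}_0}$ from the first part yields $\partial{\bf S}_0/\partial x\,(x_0,y_0)=\partial{\bf S}_0/\partial y\,(x_0,y_0)=0$; since ${\bf S}_0$ is squarefree and defines the cutcurve, $(x_0,y_0)$ is a singular point of it.

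The simple-component case is handled the same way. I write $\widetilde{{\bf S}_0}=g\cdot h$, where $g$ is the multiplicity-one irreducible factor through $(x_0,y_0)$ and $h(x_0,y_0)\neq0$, the latter because $(x_0,y_0)$ lies on no other component. The product rule, together with $g(x_0,y_0)=0$ and the vanishing of the partials of $\widetilde{{\bf S}_0}$, forces $\partial g/\partial x\,(x_0,y_0)=\partial g/\partial y\,(x_0,y_0)=0$, so $(x_0,y_0)$ is a singular point of the simple component $g=0$, which is the relevant part of the cutcurve.

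The one delicate point is the interpretation of the congruence in Lemma \ref{singular20}: I must justify that a relation valid modulo $(T,Q)$ can be evaluated at a specific common zero of $T$ and $Q$ to produce a pointwise equality. This is precisely where the hypothesis that $(x_0,y_0)$ lies on the cutcurve is used, since it guarantees a $z_0$ with $(x_0,y_0,z_0)\in\mathcal{C}_\mathbb{R}$. Everything else reduces to the product rule applied to factorizations of $\widetilde{{\bf S}_0}$, with the extra factors kept nonzero at $(x_0,y_0)$ exactly because the point sits on the cutcurve.
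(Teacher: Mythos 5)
Your proof is correct and follows exactly the route the paper intends: the paper leaves this corollary unproved as an ``immediate consequence'' of Lemma \ref{singular20}, and your argument supplies precisely the missing details --- evaluating the congruence at a real lift $(x_0,y_0,z_0)$ guaranteed by membership in the cutcurve, using that $q_1\mid sres_1$ to kill the right-hand side, and then transferring the vanishing of the partials to the squarefree defining polynomial via the product rule. Nothing to object to.
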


\medskip

However, the previous corollaries do not relate the point $(x_0,y_0)$ to the nature of its lifting point. This issue is more complex than expected. When studying the correspondence between the singularities of the intersection curve and the cutcurve, the first question that arises is whether all the singularities of the intersection curve project onto singularities in the cutcurve. This is the case if the $z$-coordinate is not zero.


\begin{thm}\label{singarribaabajo}
Let $(x_0,y_0,z_0)$ be a singular point of $\Cr$ with $z_0\ne 0$. Then, $(x_0,y_0)$ is a singularity of the curve defined by ${\bf S}_0$.
\end{thm}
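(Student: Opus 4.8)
The plan is to extract the vanishing of the gradient of the resultant $\widetilde{{\bf S}_0}$ at $(x_0,y_0)$ from Lemma~\ref{singular20}, and then to transfer that vanishing to the reduced, silhouette-cleaned polynomial ${\bf S}_0$ of Definition~\ref{def:S_0 buena}; the hypothesis $z_0\neq0$ will be used twice, once to rule out vertical tangency and once to make the transfer rigorous. First I would exploit $z_0\neq0$ directly: since $T_z(x_0,y_0,z_0)=2z_0(2z_0^2+p_2)$ and $p_2=2(R^2-r^2+x_0^2+y_0^2)>0$, we get $T_z(x_0,y_0,z_0)\neq0$, so $\nabla T\neq0$ and the torus is smooth there. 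As $(x_0,y_0,z_0)$ is singular on $\Cr$, the Jacobian of $(T,Q)$ cannot have rank $2$ at the point, hence it has rank $\le1$ and all its $2\times2$ minors vanish; in particular the minor in the $x,z$ columns and the one in the $y,z$ columns vanish at $(x_0,y_0,z_0)$. Evaluating the two identities of Lemma~\ref{singular20} at this point of $\Cr$, where the reduction modulo $(T,Q)$ disappears, then yields $\partial_x\widetilde{{\bf S}_0}(x_0,y_0)=\partial_y\widetilde{{\bf S}_0}(x_0,y_0)=0$ independently of the value of $sres_1$.

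The crux is to move from $\widetilde{{\bf S}_0}$ to ${\bf S}_0$, since $\nabla\widetilde{{\bf S}_0}$ vanishes automatically along any multiple component of the resultant and so does not by itself certify a singularity of the reduced curve. To bypass this multiplicity bookkeeping I would argue topologically, using $T_z\neq0$ a second time. By the implicit function theorem the torus is locally the graph $z=\psi(x,y)$, so near $(x_0,y_0,z_0)$ the curve $\Cr$ is the graph of $\psi$ over the real planar germ $\{G=0\}$ with $G(x,y)=Q(x,y,\psi(x,y))$, and $\Pi_z$ restricts to a homeomorphism of this germ onto its image (a germ of the cutcurve) with continuous inverse $(x,y)\mapsto(x,y,\psi(x,y))$. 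Differentiating $T(x,y,\psi)=0$ and $G=Q(x,y,\psi)$ shows that $T_zG_x$ and $T_zG_y$ are, up to sign, exactly the two minors already shown to vanish, so $\nabla G(x_0,y_0)=0$; thus $\{G=0\}$ is singular at $(x_0,y_0)$ and, via the homeomorphism, the space singularity is genuinely a non-manifold point of the cutcurve rather than an artifact of some component multiplicity.

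It then remains to identify this with a singularity of ${\bf S}_0$. Because $z_0$ is real, $(x_0,y_0)$ lies on the cutcurve, so by the discussion preceding Definition~\ref{def:S_0 buena} one has $(p_2+q_1^2-2q_0)(x_0,y_0)>0$; hence $(x_0,y_0)$ avoids every factor discarded in forming ${\bf S}_0$, giving ${\bf S}_0(x_0,y_0)=0$ with the same local branches as $\widehat{{\bf S}_0}$. Moreover a real root $z_0\neq0$ of $T(x_0,y_0,\cdot)$ forces $p_0(x_0,y_0)\neq0$, since otherwise the only real root would be a double zero at $z=0$; as $\Delta_{\cal T}=p_0\le0$ on the cutcurve we obtain $\Delta_{\cal T}(x_0,y_0)<0$, so the constraint $\Delta_{\cal T}\le0$ is inactive near $(x_0,y_0)$. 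When in addition $\Delta_{\cal Q}(x_0,y_0)>0$ the region cuts out nothing locally, so the cutcurve coincides as a germ with $\{{\bf S}_0=0\}$; were ${\bf S}_0$ smooth at $(x_0,y_0)$, the cutcurve, hence $\{G=0\}$, hence $\Cr$, would be a smooth arc, contradicting the singularity. Therefore $\nabla{\bf S}_0(x_0,y_0)=0$.

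I expect the real difficulty to lie entirely in this transfer and in its one boundary case $\Delta_{\cal Q}(x_0,y_0)=0$, where $z_0=-q_1(x_0,y_0)/2$ is a double root of $Q$ and the semialgebraic region may locally trim $\{{\bf S}_0=0\}$, so that the germ identity above can fail. There, however, $Q_z(x_0,y_0,z_0)=q_1+2z_0=0$, and together with $T_z\neq0$ the rank-$\le1$ condition forces $Q_x=Q_y=0$, i.e. $(x_0,y_0,z_0)$ is a singular point of the quadric surface; this is the cone-vertex configuration governed by Proposition~\ref{singularidadcono}, for which the projected singularity of ${\bf S}_0$ should be verified directly. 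By contrast, the gradient computation of the first steps is routine, so the \emph{main obstacle} is establishing the local coincidence of the cutcurve with $\{{\bf S}_0=0\}$ and disposing of this degenerate case.
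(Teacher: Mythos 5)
Your opening computation is fine: at a singular point the Jacobian of $(T,Q)$ has rank at most one, so Lemma \ref{singular20} gives $\nabla\widetilde{{\bf S}_0}(x_0,y_0)=0$. But the argument breaks exactly where you locate the crux, and the repair you propose does not work. The notion of singularity used throughout the paper is algebraic (the rank of the Jacobian of a generator system of $I(\Cr)$ drops), not topological, so it is neither preserved nor reflected by homeomorphisms of real point sets: a cusp is homeomorphic to a smooth arc, and a point where complex-conjugate branches cross can be a perfectly smooth real arc. Consequently the step ``$\nabla G(x_0,y_0)=0$, hence $\{G=0\}$ is a non-manifold point'' is false ($G=x^2$ already defeats it), and the concluding contradiction ``were ${\bf S}_0$ smooth at $(x_0,y_0)$, then $\Cr$ would be a smooth arc, contradicting the singularity'' is not a contradiction at all. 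You also leave the boundary case $\Delta_{\cal Q}(x_0,y_0)=0$ unproved; Proposition \ref{singularidadcono} only tells you the cone vertex is singular on $\Cr$, which is the hypothesis, not the conclusion about ${\bf S}_0$ that the theorem asserts.

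The paper's proof avoids all of this with one observation you did not exploit: ${\bf S}_0$ itself, not just $\widetilde{{\bf S}_0}$, vanishes on the cutcurve and hence on $\Cr$, so both $T$ and ${\bf S}_0$ belong to $I(\Cr)$. At a singular point the gradients of all elements of $I(\Cr)$ lie in a subspace of dimension at most one, so the Jacobian of the pair $(T,{\bf S}_0)$ has rank at most one at $(x_0,y_0,z_0)$. Since ${\bf S}_0$ is independent of $z$ while $T_z(x_0,y_0,z_0)=2z_0(2z_0^2+p_2)\ne0$ (this is where $z_0\ne0$ enters), the two minors involving the last column reduce to $-T_z\,\partial_x{\bf S}_0$ and $-T_z\,\partial_y{\bf S}_0$, forcing $\nabla{\bf S}_0(x_0,y_0)=0$. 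Applying the rank condition directly to the reduced polynomial makes the passage through $\widetilde{{\bf S}_0}$, the local germ identification, and the $\Delta_{\cal Q}=0$ case all unnecessary.
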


\begin{proof}
The ideal of the intersection curve contains both ${{\bf S}_0}$ and $T$. Since our point is a real singularity of $\mathcal{T}\cap\mathcal{Q}$, the Jacobian matrix of $T$ and ${{\bf S}_0}$ at $(x_0,y_0,z_0)$ has rank one. As ${{\bf S}_0}$ is independent of $z$, we have $\frac{\partial{{\bf S}_0}}{\partial z}\equiv0$. Furthermore, since $z_0\ne0$ and the point is real, the tangent plane to $\mathcal{T}$ at $(x_0,y_0,z_0)$ is not vertical, implying $\frac{\partial T}{\partial z}(x_0,y_0,z_0)\ne0$. Therefore, for the minors of the Jacobian matrix of $T$ and ${{\bf S}_0}$ involving last column to vanish, $\frac{\partial{{\bf S}_0}}{\partial x}$ and $\frac{\partial{{\bf S}_0}}{\partial y}$ must be zero at $(x_0,y_0,z_0)$ (i.e. at $(x_0,y_0)$).
\end{proof}

Examples \ref{toro-8} and \ref{YVC} illustrate the above theorem. Conversely, given a singular point on the cutcurve, we must differentiate whether it only has one lift or two, and whether or not it is on $\Delta_{\cal T}$ in order to establish specific results. 

\medskip
We shall begin by examining what happens when $q_1\equiv0$, that is, when the quadric $\mathcal{Q}$ is symmetric wrt the plane $z=0$.

\subsection{Case $q_1\equiv 0$ }


\begin{propst}\label{Singq_1Siemprecero_noCorona}
Suppose $q_1\equiv0$. Let $(x_0,y_0)$ be a singular point of the cutcurve with $\Delta_\mathcal{T}(x_0,y_0)\neq 0$. Then $(x_0,y_0)$ is projection of two singular points on $\Cr$.
\end{propst}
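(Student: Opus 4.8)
The plan is to lift the singularity analysis to the two sheets $z=\pm z_0$ of the intersection curve and to transport the whole problem to the geometry of the plane curve $G=0$, where $G:=p_2q_0-q_0^2-p_0$.

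First I would record the reductions forced by $q_1\equiv 0$. In this case $Q=z^2+q_0$, so both $\mathcal{T}$ and $\mathcal{Q}$ are invariant under $z\mapsto -z$; hence $\Cr$ is symmetric with respect to the plane $z=0$, and it suffices to prove that one of the two lifts is singular, the other following by symmetry. Next I would check that $z_0\neq 0$: if $z_0=0$ then $q_0(x_0,y_0)=0$, and the only candidate lift $(x_0,y_0,0)$ satisfies $T(x_0,y_0,0)=p_0(x_0,y_0)=\Delta_{\cal T}(x_0,y_0)\neq 0$, so $(x_0,y_0,0)\notin\Cr$ and $(x_0,y_0)$ could not lie on the cutcurve, contradicting the hypothesis. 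Therefore $q_0(x_0,y_0)<0$ and there are two distinct real lifts $(x_0,y_0,\pm z_0)$ with $z_0=\sqrt{-q_0(x_0,y_0)}>0$; both lie on $\Cr$ because $\widehat{{\bf S}_0}(x_0,y_0)=0$ forces $G(x_0,y_0)=0$, i.e. $T(x_0,y_0,\pm z_0)=0$.

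The second step is the key local computation. Setting $q_1=0$ in \eqref{s0eq} gives $\widetilde{{\bf S}_0}=G^2$ with $G=sres_{1,0}=p_2q_0-q_0^2-p_0$, and on the quadric one has $T|_{z^2=-q_0}=q_0^2-p_2q_0+p_0=-G$. I would then evaluate the $2\times 2$ minors of the Jacobian of $(T,Q)$ at $(x_0,y_0,z_0)$. Since $p_2>0$ and $z_0\neq 0$, both $T_z=2z_0(2z_0^2+p_2)$ and $Q_z=2z_0$ are nonzero (no vertical tangency), so $(x_0,y_0,z_0)$ is singular on $\Cr$ exactly when the two minors involving the last column vanish. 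A direct computation, after substituting $z_0^2=-q_0$, yields
\[
T_xQ_z-T_zQ_x=-2z_0\,\partial_x G,\qquad T_yQ_z-T_zQ_y=-2z_0\,\partial_y G .
\]
Because $z_0\neq 0$, this shows that $(x_0,y_0,\pm z_0)$ is singular on $\Cr$ if and only if $\partial_xG(x_0,y_0)=\partial_yG(x_0,y_0)=0$, i.e. if and only if $(x_0,y_0)$ is a singular point of the plane curve $G=0$.

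Finally I would connect the hypothesis that $(x_0,y_0)$ is singular on the cutcurve to $\nabla G(x_0,y_0)=0$. By the discussion preceding Definition \ref{def:S_0 buena} (from the proof of Lemma \ref{sres1q10}), $p_2-2q_0>0$ at every cutcurve point, so near $(x_0,y_0)$ the polynomials ${\bf S}_0$ and $\widehat{{\bf S}_0}$ cut out the same curve; hence the cutcurve is singular at $(x_0,y_0)$ precisely when $\widehat{{\bf S}_0}=\mathrm{rad}(G)$ is, that is when $\mathrm{mult}_{(x_0,y_0)}(\mathrm{rad}\,G)\geq 2$. Since the multiplicity (order of vanishing) is additive over products, $\mathrm{mult}_{(x_0,y_0)}(G)\geq\mathrm{mult}_{(x_0,y_0)}(\mathrm{rad}\,G)\geq 2$, so $\nabla G(x_0,y_0)=0$, and by the previous paragraph both lifts are singular. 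I expect the main obstacle to be exactly this last paragraph: one must handle the possibility that $G$ is non-squarefree and account for the removal of the factor $\gcd(\widehat{{\bf S}_0},p_2-2q_0)$, ensuring that singularity of the \emph{reduced} cutcurve genuinely forces the gradient of $G$ to vanish. The multiplicity comparison is what makes this implication rigorous, since a naive factor-by-factor argument would have to distinguish whether the point lies on one component of $G$ or on several.
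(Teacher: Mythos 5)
Your reductions (symmetry, $z_0\neq 0$, the identity $\widetilde{{\bf S}_0}=G^2$ with $G=sres_{1,0}$, and the computation of the minors $T_xQ_z-T_zQ_x=-2z_0\,\partial_xG$) are all correct, and your multiplicity argument showing that singularity of the reduced cutcurve forces $\nabla G(x_0,y_0)=0$ is sound. But the step you lean on to finish --- ``$(x_0,y_0,\pm z_0)$ is singular on $\Cr$ if and only if the two minors involving the last column vanish'' --- is a genuine gap, and it is not where you predicted the difficulty would be. Vanishing of the minors of the Jacobian of $(T,Q)$ means only that this particular pair of generators fails the smoothness test; it does not imply that the point is a singular point of $\Cr$, because $T$ and $Q$ need not generate the ideal $I(\Cr)$ locally. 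The paper makes this point explicitly right after Corollary \ref{cortangent1}: when the two surfaces are tangent along a component, the Jacobian of $(T,Q)$ has rank one along that whole component even though its points can be perfectly smooth points of $\Cr$ (Example \ref{ej_toro_cono_vert}). In the present setting this is not a hypothetical worry: a tangential parallel at height $z_0\neq0$ forces $(x^2+y^2-\rho^2)^2\mid G$, hence $\nabla G=0$ along the whole circle, while the lifted points may be smooth. So the implication ``$\nabla G=0\Rightarrow$ singular'' that your argument needs is false without further input; only the converse direction of your claimed equivalence is safe.

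What is missing is a local generating system of the radical ideal of $\Cr$ whose Jacobian you can legitimately test. This is exactly how the paper closes the argument: since $q_1\equiv0$, the polynomial ${\bf S}_0$ (squarefree) together with $Q=z^2+q_0$ cuts out $\Cr$, and because ${\bf S}_0$ is squarefree, $z_0\neq0$ and $Q_z=2z\neq0$ near the point, the ideal $\langle Q,{\bf S}_0\rangle$ is its own radical in a neighbourhood of $(x_0,y_0,z_0)$, hence equals $I(\Cr)$ there. For that pair the Jacobian is
\[
\begin{pmatrix} q_{0,x} & q_{0,y} & 2z\\ ({\bf S}_0)_x & ({\bf S}_0)_y & 0\end{pmatrix},
\]
and singularity of the cutcurve at $(x_0,y_0)$ kills the second row, giving rank one for a genuine generating system and therefore a singular point of $\Cr$. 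Your multiplicity observation is compatible with this (it shows $\nabla{\bf S}_0=0$ as well as $\nabla G=0$), but without the local-radical-ideal step the conclusion does not follow. If you replace your Jacobian-of-$(T,Q)$ criterion by the paper's $(Q,{\bf S}_0)$ criterion, the rest of your write-up goes through.
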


\begin{proof}
Note that, since $q_1\equiv0$, by Equations (\ref{s0eq}) and (\ref{expS0}), ${\bf S_0}$ is either the polynomial $sres_{10}$,
$$
 sres_{10} =p_2q_0  - q_0^2 - p_0   \in \R[x,y],
 $$ 
or a divisor of $sres_{10}$. Moreover, $sres_{10}$ is the remainder of dividing $-T$ by $Q$ as polynomials in $\R(x,y)[z]$, which means that $T$ is an algebraic combination of $Q$ and ${\bf S_0}$. Then, the real intersection of the vertical cylinder defined by ${\bf S_0}$ and $\mathcal{Q}$ is $\Cr$ .

On the other hand, $(x_0,y_0)$ is projection of $(x_0,y_0,\pm\sqrt{-q_0(x_0,y_0)})$ with $q_0(x_0,y_0)\neq 0$. Fix $z_0=\sqrt{-q_0(x_0,y_0)}$. Since ${\bf S_0}$ is squarefree and $z_0\neq 0$, a general point of the intersection of the vertical cylinder and the quadric in a neighbourhood of $(x_0,y_0,z_0)$ in $\C^3$ will not be singular and will have the $z$-coordinate different from zero. Moreover, $\dfrac{\partial Q}{\partial z}=2z$ and thus, the Jacobian matrix of $Q$ and ${\bf S_0}$ will have rank 2. This means that the ideal $\langle Q,{\bf S_0}\rangle$ is its own radical in such neighbourhood, so that it is the ideal of $\Cr$. Now, since $(x_0,y_0)$ is a singular point on the cutcurve, the Jacobian matrix of $\langle Q,{\bf S_0}\rangle$ evaluated at $(x_0,y_0,z_0)$ has rank one and we can conclude that the point $(x_0,y_0,z_0)$ is singular. The same argument is valid for $z_0=-\sqrt{-q_0(x_0,y_0)}$. 

\end{proof}

Observe that under the hypothesis of Proposition \ref{Singq_1Siemprecero_noCorona}, the singular point $(x_0,y_0)$ is actually projection of two different singular points of  $\mathcal{C}_\mathbb{R}$. See Example \ref{cuentasjge-cil}. This situation is only possible when $q_1$ is identically 0.

\begin{propst}\label{Symmetric_q1cero}
Let $(x_0,y_0,z_0)$ and $(x_0,y_0,-z_0)$ be two different real points in $\Cr$, such that the rank of the Jacobian matrix of $T$ and $Q$ at both points is equal to one. Then $q_1\equiv 0$.
\end{propst}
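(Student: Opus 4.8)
The plan is to work directly with the $2\times 3$ Jacobian matrix $J=\left(\begin{smallmatrix}T_x&T_y&T_z\\ Q_x&Q_y&Q_z\end{smallmatrix}\right)$ and to exploit the parity of $T$ and $Q$ under $z\mapsto -z$, rather than invoking the resultant machinery. First I would record the elementary consequence of the hypotheses at the level of the equations themselves. Since $T$ depends on $z$ only through $z^2$, the equality $T(x_0,y_0,z_0)=T(x_0,y_0,-z_0)$ is automatic, so the two symmetric points carry no new information coming from $T$. The membership of both points in $\mathcal{Q}$, however, gives $z_0^2+q_1(x_0,y_0)z_0+q_0(x_0,y_0)=0$ and $z_0^2-q_1(x_0,y_0)z_0+q_0(x_0,y_0)=0$; subtracting yields $2q_1(x_0,y_0)z_0=0$. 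Because the two points are distinct we have $z_0\neq 0$, and therefore $q_1(x_0,y_0)=0$. This only pins $q_1$ to zero at the single point $(x_0,y_0)$; the task is to upgrade this to $q_1\equiv 0$, and for that I would use the rank-one hypothesis.

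Next I would list the derivatives and their parity in $z$. The polynomials $T_x$ and $T_y$ involve $z$ only through $z^2$, hence are \emph{even}, while $T_z=2z(2z^2+p_2)$ is \emph{odd}; on the quadric side $Q_x=ez+(q_0)_x$, $Q_y=fz+(q_0)_y$ and $Q_z=2z+q_1$. Writing $P_+=(x_0,y_0,z_0)$ and $P_-=(x_0,y_0,-z_0)$ and using $q_1(x_0,y_0)=0$, I get $Q_z(P_\pm)=\pm 2z_0$, $T_z(P_-)=-T_z(P_+)$, $T_x(P_-)=T_x(P_+)$, $T_y(P_-)=T_y(P_+)$, while in $Q_x,Q_y$ only the linear terms $ez,fy$ change sign. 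The rank-one hypothesis at both points says that every $2\times 2$ minor of $J$ vanishes at $P_+$ and at $P_-$.

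The key computation is the elimination. I would take the minor formed by the $x$- and $z$-columns, namely $T_xQ_z-T_zQ_x=T_xQ_z-ez\,T_z-(q_0)_x\,T_z$, and evaluate it at $P_+$ and at $P_-$. Under $z\mapsto -z$ the first and third summands are odd and the middle one is even, so \emph{adding} the two evaluations cancels the odd terms and leaves $-2e\,z_0\,T_z(P_+)$; the analogous manoeuvre with the $y$- and $z$-columns leaves $-2f\,z_0\,T_z(P_+)$. The decisive point is that $T_z(P_+)=2z_0\bigl(2z_0^2+p_2(x_0,y_0)\bigr)\neq 0$, because $p_2=2(R^2-r^2+x^2+y^2)>0$ (as $R>r>0$) and $z_0\neq 0$, i.e. there is no vertical tangency at the real point $P_+$. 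Since both minors vanish, dividing out $z_0T_z(P_+)$ forces $e=0$ and $f=0$, whence $q_1=ex+fy+i$ reduces to the constant $i$; combined with $q_1(x_0,y_0)=0$ this gives $i=0$ and therefore $q_1\equiv 0$.

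The main obstacle, and the heart of the argument, is the elimination step: one must notice that summing the $xz$- and $yz$-minors over the two symmetric points $P_\pm$ exploits the $z\mapsto-z$ symmetry of the torus to cancel every contribution except the $e$- and $f$-terms. Everything else is bookkeeping, and the only analytic input needed is the strict positivity of $p_2$ together with $z_0\neq 0$, which guarantees $T_z(P_+)\neq 0$ and is precisely what allows the final division.
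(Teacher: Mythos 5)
Your proof is correct, but it takes a genuinely different route from the paper. The paper's proof is a certified computation: it collects $T$, $Q$, and all $2\times 2$ minors of the Jacobian evaluated at both points $A=(x_0,y_0,z_0)$ and $B=(x_0,y_0,-z_0)$, adjoins $Rrz_0t-1$ to force $R,r,z_0\neq 0$, computes a Gr\"obner basis of the resulting ideal, and reads off that $ex_0$, $fx_0$, $ey_0$, $fy_0$ and $i$ lie in it; realness of the points (which excludes $x_0=y_0=0$, since the real torus has no points over the origin) then yields $e=f=i=0$. You instead first extract $q_1(x_0,y_0)=0$ by subtracting the two quadric equations (using $z_0\neq 0$ from the distinctness of the points), and then exploit the parity of $T$ under $z\mapsto -z$: summing the $xz$- and $yz$-minors over $P_\pm$ kills all odd contributions and isolates $-2ez_0T_z(P_+)$ and $-2fz_0T_z(P_+)$, after which $T_z(P_+)=2z_0\bigl(2z_0^2+p_2(x_0,y_0)\bigr)\neq 0$ (from $p_2>0$ and $z_0\in\R\setminus\{0\}$) forces $e=f=0$ and hence $i=q_1(x_0,y_0)=0$. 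Your argument is checkable by hand, makes the role of realness explicit (it enters only through the positivity of $p_2$ and of $z_0^2$), and replaces the Gr\"obner-basis black box with a two-line cancellation; the paper's version buys brevity of exposition at the cost of relying on a machine computation. The only blemish is the typo ``$ez,fy$'' where you mean the terms $ez$ and $fz$ in $Q_x$ and $Q_y$; it does not affect the argument.
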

\begin{proof}
Recall that  $Q(x,y,z)= z^2 + (ex + fy + i)z + ax^2 + by^2 + dxy + gx + hy + j$. We call the points $A=(x_0,y_0,z_0)$ and $B=(x_0,y_0,-z_0)$. By hypothesis, $z_0\neq 0$ and the Jacobian matrix of $T$ and $Q$ at both points, $A$ and $B$, has rank one.
Then, the Gr\"{o}bner basis of the ideal generated by the polynomials $T(A)$, $Q(A)$, $T(B)$, $Q(B)$, $Q_x(A)T_z(A)-Q_z(A)T_x(A)$, $Q_z(A)T_y(A)-Q_y(A)T_z(A)\; $, $Q_x(A)T_y(A)-Q_y(A)T_x(A),\; $ $Q_x(B)T_z(B)-Q_z(B)T_x(B), $ $Q_z(B)T_y(B) $ $-$ $Q_y(B)T_z(B)$, $Q_x(B)T_y(B)-Q_y(B)T_x(B)$ and $R r z_0 t-1$ wrt the graded reverse lexicographic order, contains $ex_0$, $fx_0$, $ey_0$, $fy_0$ and $i$. Since the points are real, that implies $e=f=i=0$ and so $q_1$ is identically zero. 
\end{proof}
Therefore, when $q_1\not\equiv 0$, a point of the cutcurve can never be lifted to two singular points.

\begin{cor}\label{singnotwosing}
Let $(x_0,y_0)$ be a point on the cutcurve such that $q_1(x_0,y_0)=0$ and $q_1\not\equiv 0$. Then
$(x_0,y_0)$ cannot be projection of two different singular points of $\mathcal{C}_\mathbb{R}$.  \end{cor}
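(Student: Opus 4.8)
The plan is to argue by contradiction and reduce the statement directly to Proposition \ref{Symmetric_q1cero}, of which it is essentially the contrapositive. Suppose, contrary to the claim, that $(x_0,y_0)$ is the projection of two different singular points of $\mathcal{C}_\mathbb{R}$. Since $q_1(x_0,y_0)=0$ and $(x_0,y_0)$ lies on the cutcurve, the only possible lifts are $(x_0,y_0,z_0)$ and $(x_0,y_0,-z_0)$ with $z_0=\sqrt{-q_0(x_0,y_0)}$, as recalled in the summary of Subsection \ref{lifting}. For these two lifts to be distinct we must have $z_0\neq 0$, equivalently $q_0(x_0,y_0)\neq 0$.

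Next I would verify that at each of these two points the Jacobian matrix of $T$ and $Q$ has rank exactly one. By the definitions given at the start of Section \ref{lifting1}, a point of $\mathcal{C}_\mathbb{R}$ is singular precisely when the rank of this Jacobian is $0$ or $1$; so both lifts, being singular, have rank at most one. To exclude rank $0$, note that $Q_z=2z+q_1$, whence at $(x_0,y_0,\pm z_0)$, using $q_1(x_0,y_0)=0$, we obtain $Q_z=\pm 2z_0\neq 0$. Thus the Jacobian has rank at least one at both lifts, and combining the two bounds forces the rank to be exactly one at each.

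We are then exactly in the hypotheses of Proposition \ref{Symmetric_q1cero}, applied to the two distinct real points $(x_0,y_0,z_0)$ and $(x_0,y_0,-z_0)$ of $\mathcal{C}_\mathbb{R}$ at which the Jacobian of $T$ and $Q$ has rank one. That proposition yields $q_1\equiv 0$, contradicting the hypothesis $q_1\not\equiv 0$, and this contradiction completes the argument.

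The only step requiring genuine care—and the one I would regard as the main, if minor, obstacle—is the rank bookkeeping in the middle: the definition of a singular point of the space curve permits rank $0$ as well as rank $1$, whereas Proposition \ref{Symmetric_q1cero} is stated for rank exactly one. One must therefore invoke $Q_z\neq 0$ explicitly (which is available precisely because $q_1(x_0,y_0)=0$ and $z_0\neq 0$) to rule out the rank-zero case before the proposition can be applied. Everything else is a direct substitution into an already-established result.
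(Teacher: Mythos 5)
Your proof is correct and follows the same route as the paper, which states the corollary as an immediate consequence of Proposition \ref{Symmetric_q1cero} without writing out the details; you simply make explicit the rank bookkeeping (singular $\Rightarrow$ rank of the Jacobian of $T$ and $Q$ at most one, and $Q_z=\pm 2z_0\neq 0$ rules out rank zero) that the paper leaves implicit. The only nitpick is that the paper's definition of singularity is stated via the ideal $I(\mathcal{C}_\mathbb{R})$, so the correct statement is the one-directional implication ``rank $2$ for $T,Q$ implies regular'' rather than your ``precisely when''; since you only use that direction, the argument stands.
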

%
%
%

\medskip
The previous results describe cases where there is a mapping between singularities on $\mathcal{C}_\mathbb{R}$ and the cutcurve. However, as we mentioned before, it is not always the case. Next, we will see that the only singularities of $\cal C_\mathbb{R}$ whose projections might not be singularities of the cutcurve are actually in the plane $z=0$.

\begin{thm}\label{Singq_1Siemprecero_enCorona}
Suppose $q_1\equiv0$ and let $(x_0,y_0,0)\in\Cr$.
Then $(x_0,y_0,0)$ is a singular point of $\Cr$ iff $ (x_0,y_0) $ is a non-transversal intersection of $\Delta_\mathcal{Q}$ and ${\bf S_0}$ and $\Delta_\mathcal{Q}$ is not a component of ${\bf S_0}$.
\end{thm}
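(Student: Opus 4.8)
The plan is to trade the uninformative pair $(T,Q)$ for the pair $(Q,{\bf S}_0)$ as local generators of the curve, exploit the vertical tangency forced at $z=0$, and then read off the singularity from a purely planar Jacobian which turns out to be the transversality determinant of ${\bf S}_0$ and $\Delta_{\mathcal Q}$. First I would record the elementary consequences of $q_1\equiv0$ together with $z_0=0$: membership $(x_0,y_0,0)\in\Cr$ is equivalent to $p_0(x_0,y_0)=q_0(x_0,y_0)=0$, so $\Delta_{\mathcal T}$ and $\Delta_{\mathcal Q}=-4q_0$ both vanish at $(x_0,y_0)$; and since $T_z=2z(2z^2+p_2)$ and $Q_z=2z$ both vanish at $z=0$, the tangent planes of $\mathcal T$ and $\mathcal Q$ there are vertical, so the naive Jacobian of $(T,Q)$ has a zero third column and cannot by itself decide smoothness.

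To obtain a reliable local model I would use that, for $q_1\equiv0$, the real set $\{{\bf S}_0=0\}\cap\{Q=0\}$ equals $\Cr$ (established in the discussion around Proposition \ref{Singq_1Siemprecero_noCorona}). Because ${\bf S}_0$ is $z$-free while $Q=z^2+q_0$ has degree two in $z$, the polynomials $Q$ and ${\bf S}_0$ share no common factor, hence $V(Q,{\bf S}_0)\subset\mathbb C^3$ is a complete-intersection curve; being Cohen--Macaulay it is reduced as soon as it is generically reduced. This is exactly the step where the hypothesis that $\Delta_{\mathcal Q}$ is not a component of ${\bf S}_0$ enters: it forces $q_0$ not to vanish identically on any component of $\{{\bf S}_0=0\}$, so at a generic point of each such component $z=\pm\sqrt{-q_0}\neq0$, and the minor of the Jacobian of $(Q,{\bf S}_0)$ built from the ${\bf S}_0$-row and the $z$-column equals $2z\,({\bf S}_0)_x$ (or $2z\,({\bf S}_0)_y$), which is nonzero. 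Thus $V(Q,{\bf S}_0)$ is generically reduced, hence reduced, so $\langle Q,{\bf S}_0\rangle$ is radical and is the ideal of $\Cr$ near $(x_0,y_0,0)$; consequently $(x_0,y_0,0)$ is singular if and only if the Jacobian of $(Q,{\bf S}_0)$ drops rank there.

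Evaluating that Jacobian at $(x_0,y_0,0)$, the third column is zero (both $({\bf S}_0)_z\equiv0$ and $Q_z=2z=0$), so the rank is below two precisely when
\[
\det\begin{pmatrix}({\bf S}_0)_x & ({\bf S}_0)_y\\ (q_0)_x & (q_0)_y\end{pmatrix}(x_0,y_0)=0 .
\]
Since $\Delta_{\mathcal Q}=-4q_0$, this determinant equals $-\tfrac14$ times $\frac{\delta({\bf S}_0,\Delta_{\mathcal Q})}{\delta(x,y)}$, so its vanishing is exactly the non-transversal intersection of ${\bf S}_0$ and $\Delta_{\mathcal Q}$ at $(x_0,y_0)$ (with the convention that a singular point of either curve counts as non-transversal). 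This yields, under the standing hypothesis that $\Delta_{\mathcal Q}$ is not a component of ${\bf S}_0$, the equivalence that $(x_0,y_0,0)$ is singular if and only if ${\bf S}_0$ and $\Delta_{\mathcal Q}$ meet non-transversally at $(x_0,y_0)$; this already gives the implication $\Leftarrow$ and the non-transversality half of $\Rightarrow$.

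It remains to justify the component clause in $\Rightarrow$, namely that a singular point forces $\Delta_{\mathcal Q}$ not to be a component of ${\bf S}_0$, which I would prove by contraposition. If some real component $\gamma$ of $\{q_0=0\}$ is also a component of ${\bf S}_0$, then $\gamma\mid{\bf S}_0\mid sres_{10}=p_2q_0-q_0^2-p_0$ and, since $q_0$ vanishes on $\gamma$, one gets $p_0\equiv0$ on $\gamma$; as $p_0=\big((R+r)^2-x^2-y^2\big)\big((R-r)^2-x^2-y^2\big)$ factors into the two irreducible torus equators, $\gamma$ must be one of these circles, and hence $q_0$ is, up to a constant, rotationally symmetric. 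Then both $\mathcal T$ and $\mathcal Q=\{z^2+q_0=0\}$ are surfaces of revolution about the $z$-axis, so $\Cr$ is invariant under all rotations about that axis, and the circle $\gamma\times\{0\}\subset\Cr$ is a single orbit; since a reduced algebraic curve has only finitely many singular points, not every point of $\gamma$ can be singular, whence all of them are regular. This contradicts singularity of $(x_0,y_0,0)$ and completes the equivalence. The step I expect to be the main obstacle is the radicality/reducedness argument at the vertical-tangency locus $z=0$: the projection folds there and the Jacobian of $(T,Q)$ degenerates, so one must show that replacing $T$ by ${\bf S}_0$ restores a reduced local model, which is precisely what the non-component hypothesis secures.
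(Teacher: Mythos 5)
Your proposal is correct and its computational core coincides with the paper's: both proofs trade $(T,Q)$ for $(Q,{\bf S}_0)$ as local equations of $\Cr$ (legitimate because, for $q_1\equiv 0$, $T$ lies in $\langle Q, sres_{1,0}\rangle$), and both observe that at $z=0$ the $z$-column of the Jacobian of $(Q,{\bf S}_0)$ vanishes, so the rank condition collapses to the planar transversality determinant of ${\bf S}_0$ and $\Delta_{\mathcal Q}=-4q_0$. Where you genuinely diverge is in the scaffolding. You prove once and for all that $\langle Q,{\bf S}_0\rangle$ is radical (complete intersection, hence Cohen--Macaulay, hence reduced as soon as generically reduced, which the non-component hypothesis guarantees), and then read both directions of the equivalence off a single Jacobian criterion; the paper instead argues the directions separately and more locally (rank two implies smooth in one direction; for the converse, the point is isolated in the rank-drop locus of $\Cr$, hence singular). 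For the component clause you use rotational symmetry: $q_0$ proportional to a circle of $p_0$ forces $\mathcal Q$ to be a surface of revolution, so $\gamma\times\{0\}$ is a single orbit and finiteness of the singular locus makes every point of it regular; the paper instead shows explicitly that every component of ${\bf S}_0$ is then a concentric circle and that the only branch of $\Cr$ through the point is the smooth circle $\{\Delta_{\mathcal Q}=0\}\cap\{z=0\}$. Both routes are valid; yours is more systematic and reuses the divisibility chain $\gamma\mid{\bf S}_0\mid sres_{1,0}$, $\gamma\mid q_0\Rightarrow\gamma\mid p_0$ in two places.

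Two spots deserve tightening. First, in the generic-reducedness step you assert that ``$\Delta_{\mathcal Q}$ is not a component of ${\bf S}_0$'' forces $q_0$ not to vanish on any component of $\{{\bf S}_0=0\}$; as stated the hypothesis only excludes the full conic $q_0$ dividing ${\bf S}_0$, and you need the observation from your last paragraph (any common irreducible factor of $q_0$ and ${\bf S}_0$ divides $p_0$, whose factors are irreducible circles, so that factor must be all of $q_0$) to rule out a linear factor of a degenerate $q_0$ sneaking into ${\bf S}_0$; that observation should be moved up front. Second, radicality gives $\langle Q,{\bf S}_0\rangle=I\bigl(V_{\mathbb C}(Q,{\bf S}_0)\bigr)$, whereas the paper's notion of singularity is phrased through $I(\Cr)$, the ideal of the real points; to conclude that a rank drop implies singularity one should either note that isolated points of $\Cr$ are singular by convention and that otherwise the real branches force the relevant complex components to have Zariski-dense real traces, or argue as the paper does that the point is isolated among the rank-drop points of $\Cr$. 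This is bookkeeping rather than a flaw --- the paper's own proof is no more explicit there --- but it is the one step where ``radical'' does not immediately buy you ``ideal of $\Cr$''.
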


\begin{proof}
Notice that, in this particular case, $\Delta_Q=-4q_0$ and $\Cr$ is the real intersection of $\mathcal{Q}$ and the cylinder over ${\bf S_0}$ (see the proof of Proposition \ref{Singq_1Siemprecero_noCorona}).

Let $(x_0,y_0,0)$ a singularity of $\Cr$. Then $(x_0,y_0)$ is obviously in $\Delta_{\cal Q}\cap\Delta_{\cal T}$.
Suppose now that $\Delta_{\cal Q}$ and ${\bf S_0}$ intersects transversally at $(x_0,y_0)$. Then the Jacobian matrix of $\Delta_\mathcal{Q}$ and ${\bf S_0}$ has rank 2 at $(x_0,y_0)$ and so, the Jacobian matrix of $Q=z^2+q_0$ and ${\bf S_0}$ evaluated at $(x_0,y_0,0)$ has also rank 2. Hence the point $(x_0,y_0,0)$ is a smooth point of the intersection of $\mathcal{Q}$ and the vertical cylinder over ${\bf S_0}$ (i.e. $\mathcal{T}\cap\mathcal{Q}$), which is a contradiction.


Suppose now that $\Delta_\mathcal{Q}$ is a component of ${\bf S_0}$. Hence, for every point $(x_1,y_1)$ on the curve defined by $\Delta_\mathcal{Q}$, we have $(x_1,y_1,0) \in \mathcal{T}\cap\mathcal{Q}$, and so $\Delta_\mathcal{T}(x_1,y_1)=0$. Consequently, $\Delta_\mathcal{Q}$ is also a component of $\Delta_\mathcal{T}$, so it is a circle centered at the origin.

Recall that  ${\bf S_0}$ is the squarefree part of $sres_{10}$,
$
 sres_{10} =p_2q_0  - q_0^2 - p_0
$, $p_0=\Delta_{\cal T}$ and $p_2=2(R^2-r^2+x^2+y^2)$.
Therefore, $\Delta_\mathcal{Q}$, $p_2$ and the other factor of $\Delta_\mathcal{T}$ are of type $kx^2+ky^2+$ constant, with $k\in\R$. This means that the quotient of $sres_{10}$ divided by $\Delta_\mathcal{Q}$ is also of that type, so any other component of ${\bf S_0}$, if real, is a circle centered at the origin, without real points in common with $\Delta_\mathcal{Q}$. This means that the only component of $\Cr$ passing through $(x_0,y_0,0)$ is the circle $\{\Delta_\mathcal{Q}=0 \}\cap \{z=0\}$ in $\R^3$, which is smooth. This yields another contradiction.

Finally, suppose that $\Delta_\mathcal{Q}$ and ${\bf S_0}$ intersect non-transversally at $(x_0,y_0)$ and $\Delta_\mathcal{Q}$ is not a component of ${\bf S_0}$.
Then, $(x_0,y_0)$ is an isolated point of ${\bf S_0}\cap\Delta_\mathcal{Q}$, so that $(x_0,y_0,0)$ is an isolated point of $\Cr\cap\{z=0\}$. Now, since ${\bf S_0}$ is squarefree, there is a neighbourhood $U$ of $(x_0,y_0)$ where the gradient of ${\bf S_0}$ does not vanish outside $(x_0,y_0)$ and $U\cap{\bf S_0}\cap\Delta_\mathcal{Q}=\{(x_0,y_0)\}$. Then, the Jacobian matrix of $Q$ and ${\bf S_0}$ has rank two at any point in  $\mathcal{T}\cap\mathcal{Q}\cap(U\times\R)$ but $(x_0,y_0,0)$. Then, either $(x_0,y_0,0)$ is an isolated point of $\Cr$, or it is an isolated point of the intersection of $\Cr$ with the variety defined by the $2\times2$ minors of the Jacobian matrix of $Q$ and ${\bf S_0}$. Both cases imply that the point is singular for $\Cr$.
\end{proof}

Examples \ref{curvareal} and \ref{toro9-2sheet} illustrate  Theorem \ref{Singq_1Siemprecero_enCorona}. Next, we see what happens when $q_1\not\equiv 0$. 

\subsection{Case $q_1\not\equiv 0$}

Given a point $(x_0,y_0)$ on the cutcurve, recall that if $q_1\not\equiv 0$ and $q_1(x_0,y_0) \ne0$, then it is projection of a unique point of $\cal{T}\cap \cal{Q}$. In this case, we obtain the following result.

\begin{propst}\label{Singq_1nocero_noCorona}
%
%
Let $(x_0,y_0)$ be a point of the cutcurve with $q_1(x_0,y_0) \ne0$. Then,
\begin{enumerate}
\item If $\Delta_\mathcal{T}(x_0,y_0)= 0$, then $(x_0,y_0)$ and $(x_0,y_0,0)$ are both regular.
\item If $\Delta_\mathcal{T}(x_0,y_0)\ne0$ and $(x_0,y_0)$ is a singular point, then, it is the projection of a singular point of $\Cr$.
 \end{enumerate}
\end{propst}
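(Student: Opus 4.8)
The plan is to handle the two cases separately, using throughout that the hypothesis $q_1(x_0,y_0)\neq 0$ forces, via Proposition \ref{q1sres1}, that $sres_1(x_0,y_0)\neq 0$, so that by Proposition \ref{z0L} the point $(x_0,y_0)$ has a single lift $(x_0,y_0,z_0)\in\mathcal{C}_\mathbb{R}$. The local computation I would rely on at this lift is that, writing $J$ for the Jacobian matrix of $T$ and $Q$, one has $T_z(x_0,y_0,z_0)=2z_0(2z_0^2+p_2)$ and $Q_z(x_0,y_0,z_0)=q_1(x_0,y_0)+2z_0$, together with $T_x(x_0,y_0,z)=(p_2)_x z^2+(\Delta_{\mathcal T})_x$ and similarly for $T_y$.

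For part (1), when $\Delta_{\mathcal T}(x_0,y_0)=p_0(x_0,y_0)=0$ the polynomial $T(x_0,y_0,z)=z^2\bigl(z^2+p_2(x_0,y_0)\bigr)$ has, since $p_2>0$, the single real root $z=0$; as the lift must be real this forces $z_0=0$ and $q_0(x_0,y_0)=0$. I would then observe that $T_z(x_0,y_0,0)=0$ while $Q_z(x_0,y_0,0)=q_1(x_0,y_0)\neq0$, and that $T_x(x_0,y_0,0)=(\Delta_{\mathcal T})_x(x_0,y_0)$, $T_y(x_0,y_0,0)=(\Delta_{\mathcal T})_y(x_0,y_0)$. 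A short separate step shows $\nabla\Delta_{\mathcal T}(x_0,y_0)\neq(0,0)$: since $\Delta_{\mathcal T}=\bigl((R+r)^2-x^2-y^2\bigr)\bigl((R-r)^2-x^2-y^2\bigr)$ is a product of two disjoint concentric circles, at a zero of $\Delta_{\mathcal T}$ exactly one factor vanishes, its gradient $(-2x,-2y)$ is nonzero there (the origin lies on neither circle), and the other factor is a nonzero constant multiple. Consequently the $2\times2$ minor $T_xQ_z-T_zQ_x=T_x\,q_1$ (and likewise $T_yQ_z-T_zQ_y=T_y\,q_1$) is nonzero, so $J$ has rank $2$ and $(x_0,y_0,0)$ is regular on $\mathcal{C}_\mathbb{R}$. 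For the cutcurve, Lemma \ref{singular20} evaluated at the lift gives $\nabla\widetilde{{\bf S}_0}(x_0,y_0)=sres_1(x_0,y_0)\,q_1(x_0,y_0)\,\nabla\Delta_{\mathcal T}(x_0,y_0)\neq(0,0)$, from which I would conclude regularity of ${\bf S}_0$ at $(x_0,y_0)$ (see the final paragraph).

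For part (2), assuming $(x_0,y_0)$ is singular on the cutcurve, I would first argue that $\nabla\widetilde{{\bf S}_0}(x_0,y_0)=(0,0)$, and then invoke Corollary \ref{cortangent1}, which applies precisely because $q_1(x_0,y_0)\neq0$, to deduce that $J$ has rank one at $(x_0,y_0,z_0)$; that is, the unique lift is a singular point of $\mathcal{C}_\mathbb{R}$, as claimed.

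The technical heart common to both directions is the passage between the gradient of the full resultant $\widetilde{{\bf S}_0}$ and the singularity/regularity of the reduced defining polynomial ${\bf S}_0$ of the cutcurve. Writing $\widetilde{{\bf S}_0}=\prod_i h_i^{m_i}$ with the $h_i$ the distinct irreducible factors, and recalling that on the cutcurve the divided-out factor $p_2+q_1^2-2q_0$ does not vanish (so the factors removed when forming ${\bf S}_0$ from $\widehat{{\bf S}_0}$ are nonzero at $(x_0,y_0)$), I would apply the product rule: if two factors of ${\bf S}_0$ vanish at $(x_0,y_0)$, or a single factor vanishes together with its gradient, or any vanishing factor has multiplicity $m_i\ge2$, then every summand of $\nabla\widetilde{{\bf S}_0}$ carries a factor vanishing at $(x_0,y_0)$ and the gradient is zero; conversely $\nabla\widetilde{{\bf S}_0}(x_0,y_0)\neq(0,0)$ forces the point to lie on a single simple factor with nonzero gradient, hence to be a regular point of the squarefree part, which coincides with ${\bf S}_0$ near $(x_0,y_0)$. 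This bookkeeping with multiplicities and the excised conic is the step I expect to require the most care; the remainder reduces to the two explicit partial-derivative evaluations above together with the already established facts about $sres_1$, $q_1$ and $\Delta_{\mathcal T}$.
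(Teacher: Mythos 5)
Your treatment of part (1) is correct and essentially the paper's argument: both reduce to showing $z_0=0$, $T_z=0$, $Q_z=q_1\neq 0$, deducing rank $2$ for the Jacobian of $T$ and $Q$, and then transferring this to $\nabla\widetilde{{\bf S}_0}\neq(0,0)$ via Lemma \ref{singular20}/Corollary \ref{cortangent1}; your explicit verification that $\nabla\Delta_{\mathcal T}\neq(0,0)$ and your closing bookkeeping between $\nabla\widetilde{{\bf S}_0}$ and the squarefree ${\bf S}_0$ are sound and, if anything, more detailed than the paper.

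Part (2), however, has a genuine gap at its last step. From $\nabla{\bf S}_0(x_0,y_0)=(0,0)$ you correctly get $\nabla\widetilde{{\bf S}_0}(x_0,y_0)=(0,0)$, and Corollary \ref{cortangent1} then tells you that the Jacobian matrix of $T$ and $Q$ has rank one at the lift. But rank one of the Jacobian of $T$ and $Q$ does \emph{not} certify that the lift is a singular point of $\mathcal{C}_\mathbb{R}$: the paper's definition of a singular point of a space curve is phrased in terms of a generating system of the ideal $I(\mathcal{C})$, and $\{T,Q\}$ need not generate that ideal (the intersection may be non-reduced). The paper itself warns, immediately after Corollary \ref{cortangent1} and in Example \ref{ej_toro_cono_vert}, that the Jacobian of $T$ and $Q$ can have rank one along an entire \emph{smooth} component when the surfaces are tangent there, with $\nabla\widetilde{{\bf S}_0}$ vanishing on its projection. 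The paper closes this gap by a different mechanism: away from the isolated-point case (handled separately), it uses Proposition \ref{z0L} to show that on $U\times\mathbb{R}$ the curve is cut out by ${\bf S}_0(x,y)=0$ together with the linear lifting relation $sres_1\,z+sres_{1,0}=0$, argues that this pair generates the ideal of $\mathcal{C}_\mathbb{R}$ locally because its Jacobian has rank two at a general nearby point (${\bf S}_0$ squarefree, $q_1\neq 0$), and then observes that this Jacobian drops to rank one at $(x_0,y_0,z_0)$ exactly because $\nabla{\bf S}_0(x_0,y_0)=(0,0)$ while the $z$-entry $sres_1(x_0,y_0)$ is nonzero. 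You would need to supply an argument of this kind (or otherwise rule out that the lift sits on a multiple, tangential component) before concluding singularity from rank one.
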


\begin{proof}  
\begin{enumerate}
\item 
First observe that $q_0(x_0,y_0)=p_0(x_0,y_0)=0$. The hypothesis $q_1(x_0,y_0)\neq 0$ implies that the degree of the gcd of $T(x_0,y_0,z)$ and $Q(x_0,y_0,z)$ is equal to $1$. In fact, since $\Delta_\mathcal{T}(x_0,y_0)=0$, the gcd is equal to $z$. Then, since $z=0$ is double root of $T(x_0,y_0,z)=z^4+p_2(x_0,y_0)z^2$, $z=0$ must be a simple root of $Q(x_0,y_0,z)$. This implies that $(x_0,y_0,0)$ is regular for $Q$ and that the tangent plane to $\mathcal{Q}$ at $(x_0,y_0,0)$ is not vertical ($Q_z\ne0$), so the intersection with $\mathcal{T}$ is transversal, and $(x_0,y_0,0)$ is a regular point of $\mathcal{T}\cap\mathcal{Q}$. Besides, by Corollary \ref{cortangent1}, the partial derivatives of $\widetilde{{\bf S}_0}$ do not vanish at $(x_0,y_0)$ simultaneously, which means that $(x_0,y_0)$ is a regular point of the resultant curve, so it is also regular for the cutcurve.

\item
If $(x_0,y_0)$ is an isolated point of the cutcurve, then it must be the projection of a singular point. Otherwise, there is a neighbourhood $U$ of $(x_0,y_0)$ where we can lift the cutcurve using Proposition \ref{z0L}. This means that, in $U\times\R$, the intersection curve is defined by ${\bf S_0}(x,y)=0$ and Equation \eqref{z0}. Observe that the Jacobian matrix of both equations has rank two if $q_1(x,y)\ne0$ and one of the partial derivatives of ${\bf S_0}$ does not vanish. Thus, since ${\bf S_0}$ is squarefree, this happens for a general point of $(U\times\R)  \cap \Cr$. This means that the ideal of the intersection curve in the open subset $U\times\R$ is generated by ${\bf S_0}$ and Equation \eqref{z0}, whose Jacobian matrix has rank one at $(x_0,y_0,z_0)$, $z_0$ given by Equation \eqref{z0}, because the partial derivatives of ${\bf S_0}$ vanish. This means that $(x_0,y_0,z_0)$ is singular in $\Cr$.
\end{enumerate}
\end{proof}



 Example \ref{toro-8} illustrates the above results. 
%
%
Hence, when there is only one lift, the situation is quite clear. However, it deserves much more care to study the case $q_1\not\equiv 0$ and $q_1(x_0,y_0) =0$. With this objective, we must introduce results that help us identify when there is tangency along a component of $\mathcal{C}_\mathbb{R}$. 


\begin{thm}\label{cortangent2}
Let $(x_0,y_0,0)\in\Cr$. Then, the following are equivalent:
 \begin{enumerate}
 \item The point $(x_0,y_0)$ is a non-transversal intersection point of $\Delta_{\cal T}(x_0,y_0)$ and $\Delta_{{\cal Q}}(x_0,y_0)$.
 \item The Jacobian matrix of $T$ and $Q$ at $(x_0,y_0,0)$ has rank one.
 
\end{enumerate}
 
%
%

\end{thm}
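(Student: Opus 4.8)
The plan is to evaluate both $T$ and $Q$ along $z=0$ and show that each of the two conditions collapses to the vanishing of a single $2\times2$ determinant. First I would record the consequences of the hypothesis $(x_0,y_0,0)\in\Cr$. Since $T(x_0,y_0,0)=p_0(x_0,y_0)$ and $Q(x_0,y_0,0)=q_0(x_0,y_0)$, the point lying on $\Cr$ forces $p_0(x_0,y_0)=q_0(x_0,y_0)=0$. Hence $\Delta_{\cal T}(x_0,y_0)=0$ holds automatically, while $\Delta_{\cal Q}(x_0,y_0)=q_1(x_0,y_0)^2-4q_0(x_0,y_0)=q_1(x_0,y_0)^2$. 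In particular $(x_0,y_0)$ always lies on $\Delta_{\cal T}$, and it lies on $\Delta_{\cal Q}$ (so is a genuine intersection point of the two silhouettes) precisely when $q_1(x_0,y_0)=0$.

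Next I would compute the Jacobian matrix of $T$ and $Q$ at $(x_0,y_0,0)$. Using $T_z=4z^3+2p_2z$ and $Q_z=2z+q_1$, at $z=0$ we get $T_z=0$ and $Q_z=q_1(x_0,y_0)$, whereas $T_x=\frac{\partial p_0}{\partial x}$, $T_y=\frac{\partial p_0}{\partial y}$, $Q_x=\frac{\partial q_0}{\partial x}$, $Q_y=\frac{\partial q_0}{\partial y}$, all evaluated at $(x_0,y_0)$. Thus the Jacobian is
$$J=\begin{pmatrix}\dfrac{\partial p_0}{\partial x} & \dfrac{\partial p_0}{\partial y} & 0\\[4pt] \dfrac{\partial q_0}{\partial x} & \dfrac{\partial q_0}{\partial y} & q_1\end{pmatrix}.$$
The structural fact I would invoke here is that $\Delta_{\cal T}=p_0=\bigl((R+r)^2-x^2-y^2\bigr)\bigl((R-r)^2-x^2-y^2\bigr)$ is the union of two disjoint concentric circles of radii $R+r$ and $R-r$ (as $R>r>0$), whose gradient never vanishes on its zero set. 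Consequently the first row of $J$ is nonzero, so $\mathrm{rank}\,J\ge 1$ and "rank one'' means exactly "rank not equal to two''. Now $\mathrm{rank}\,J=1$ iff all three $2\times2$ minors vanish: the two minors using the last column are $\tfrac{\partial p_0}{\partial x}\,q_1$ and $\tfrac{\partial p_0}{\partial y}\,q_1$, which (since $\nabla p_0\ne0$) both vanish iff $q_1(x_0,y_0)=0$; the remaining minor is $\tfrac{\partial p_0}{\partial x}\tfrac{\partial q_0}{\partial y}-\tfrac{\partial p_0}{\partial y}\tfrac{\partial q_0}{\partial x}$. Hence $\mathrm{rank}\,J=1$ is equivalent to the pair $q_1(x_0,y_0)=0$ and $\tfrac{\partial p_0}{\partial x}\tfrac{\partial q_0}{\partial y}-\tfrac{\partial p_0}{\partial y}\tfrac{\partial q_0}{\partial x}=0$.

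Finally I would translate this into condition (1) through a differential computation. When $q_1(x_0,y_0)=0$, differentiating $\Delta_{\cal Q}=q_1^2-4q_0$ gives $\nabla\Delta_{\cal Q}=-4\,\nabla q_0$ at $(x_0,y_0)$, so the Jacobian determinant $\frac{\delta(\Delta_{\cal T},\Delta_{\cal Q})}{\delta(x,y)}$ equals $-4$ times the determinant $\tfrac{\partial p_0}{\partial x}\tfrac{\partial q_0}{\partial y}-\tfrac{\partial p_0}{\partial y}\tfrac{\partial q_0}{\partial x}$. Since being an intersection point of $\Delta_{\cal T}$ and $\Delta_{\cal Q}$ already requires $q_1(x_0,y_0)=0$, and since non-transversality means precisely the vanishing of this Jacobian determinant (per the transversality criterion recalled earlier), condition (1) is equivalent to the two conditions found above, hence to $\mathrm{rank}\,J=1$. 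The complementary case $q_1(x_0,y_0)\ne0$ is handled uniformly: then $(x_0,y_0)\notin\Delta_{\cal Q}$ so (1) fails, while the minors $\tfrac{\partial p_0}{\partial x}\,q_1,\ \tfrac{\partial p_0}{\partial y}\,q_1$ cannot both vanish, giving $\mathrm{rank}\,J=2$ so that (2) fails as well.

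I expect the main obstacle to be the bookkeeping around the paper's definition of a "non-transversal intersection point'': one must keep in mind that merely \emph{being} an intersection point already forces $q_1(x_0,y_0)=0$, and one must check that the Jacobian-determinant criterion correctly absorbs the degenerate subcase in which $\Delta_{\cal Q}$ is itself singular at $(x_0,y_0)$ (there $\nabla q_0=0$, so the determinant vanishes trivially). The two enabling facts that make everything reduce to a single determinant are the nonvanishing of $\nabla p_0$ on the torus silhouette and the identity $\nabla\Delta_{\cal Q}=-4\,\nabla q_0$ valid at points with $q_1=q_0=0$.
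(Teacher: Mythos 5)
Your proof is correct and follows essentially the same route as the paper's: both reduce the statement to comparing the $2\times 2$ minors of the Jacobian of $T$ and $Q$ at $z=0$ with the Jacobian determinant of the two silhouettes, using that the factor $R^2+r^2-x^2-y^2$ (equivalently, the gradient of $p_0$) cannot vanish on $\Delta_{{\cal T}}$ and that the minors involving the $z$-column are multiples of $q_1$. The only difference is presentational: you organize the computation in terms of $\nabla p_0$, $\nabla q_0$ and the identity $\nabla\Delta_{{\cal Q}}=-4\,\nabla q_0$ at points with $q_1=q_0=0$, where the paper expands all determinants explicitly in the coefficients of $Q$ and factors them.
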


\begin{proof}
Observe that, since $(x_0,y_0,0)\in\mathcal{T}\cap\mathcal{Q}$, it follows that $\Delta_{{\cal T}}(x_0,y_0)={{\bf S}_0}(x_0,y_0)=q_0(x_0,y_0)=0$.

The point $(x_0,y_0)$ is a non-transversal intersection of $\Delta_{{\cal T}}$ and $\Delta_{{\cal Q}}$ if and only if
$\Delta_{{\cal T}}(x_0,y_0)= \Delta_{{\cal Q}}(x_0,y_0)=0$ and
\begin{multline}\label{td}
\left| {\begin{array}{*{20}{c}}
{\Delta_{{{\cal T}}_x}\left(x_0,y_0 \right)}&{\Delta_{{{\cal Q}}_x}\left( x_0,y_0 \right)}\\
{\Delta_{{{\cal T}}_y}\left( x_0,y_0\right)}&{\Delta_{{{\cal Q}}_y}\left( x_0,y_0 \right)}
\end{array}} \right|
=\\
=-8 (R^2 + r^2 - x_0^2 - y_0^2)\left( (fx_0-ey_0)q_1(x_0,y_0)+4ax_0y_0 - 4bx_0y_0 - 2dx_0^2 + 2dy_0^2 + 2gy_0 - 2hx_0\right)=0.
\end{multline}
Since $0<r<R$, we have $|R^2 + r^2 - x^2 - y^2|=2Rr\neq 0$ on the curve defined by $\Delta_{\cal T}$. Then, the above conditions are equivalent to $S_0(x_0,y_0)=p_0(x_0,y_0)=q_1(x_0,y_0)=q_0(x_0,y_0)=2ax_0y_0 - 2bx_0y_0 - dx_0^2 + dy_0^2 + gy_0 - hx_0 =0$.

On the other hand,
$$\left| {\begin{array}{*{20}{c}}
{{T_x}\left( x_0,y_0,0 \right)}&{{T_z}\left( x_0,y_0,0 \right)}\\
{{Q_x}\left( x_0,y_0,0 \right)}&{{Q_z}\left( x_0,y_0,0 \right)}
\end{array}} \right|= -4x_0(R^2 + r^2 - x_0^2 - y_0^2)q_1(x_0,y_0),$$
$$
\left| {\begin{array}{*{20}{c}}
{{T_y}\left( x_0,y_0,0 \right)}&{{T_z}\left(x_0,y_0,0 \right)}\\
{{Q_y}\left( x_0,y_0,0\right)}&{{Q_z}\left( x_0,y_0,0 \right)}
\end{array}} \right|=-4y_0(R^2 + r^2 - x_0^2 - y_0^2)q_1(x_0,y_0),
$$
and
$$
\left| {\begin{array}{*{20}{c}}
{{T_x}\left( {{x_0},{y_0},{0}} \right)}&{{T_y}\left( {{x_0},{y_0},{0}} \right)}\\
{{Q_x}\left( {{x_0},{y_0},{0}} \right)}&{{Q_y}\left( {{x_0},{y_0},{0}} \right)}
\end{array}} \right|=-4(R^2 + r^2 - x_0^2 - y_0^2 )(2ax_0y_0 - 2bx_0y_0 - dx_0^2 + dy_0^2 + gy_0 - hx_0).
$$
Therefore, it is clear that these determinants vanish and so, the rank of the Jacobian matrix is 1, if and only if $(x_0,y_0)$ is a non-transversal intersection point of $\Delta_{\cal T}(x_0,y_0)$ and $\Delta_{{\cal Q}}(x_0,y_0)$.

Observe that the fact that the rank of the Jacobian matrix is 1 means that the tangent plane at $(x_0,y_0,0)$ to the torus is contained into the tangent plane at $(x_0,y_0,0)$ to the quadric.
%
\end{proof}

To better understand what Theorem \ref{cortangent2} means, see Examples \ref{varios-5} and \ref{cuentasjge-cil}. In Example \ref{varios-5}, with $q_1\not\equiv 0$, the point on the cutcurve that satisfies Theorem \ref{cortangent2} is singular and the lifting point is also singular on the intersection curve.
In contrast, in Example \ref{cuentasjge-cil}, with $q_1\equiv 0$, the points on the cutcurve that satisfy the hypotheses of Theorem \ref{cortangent2} are not singular, and neither are their liftings. In fact, the hypotheses of Theorem
\ref{Singq_1Siemprecero_enCorona} are not satisfied. However, there is tangency along a component of the space curve. Proposition \ref{multiple_components} and Theorem \ref{lma:mult_comp} explain this situation.


%
%
%


\begin{propst}\label{multiple_components}Suppose $q_1\not\equiv0$. Let $f\in\mathbb{R}[x,y]$ be an irreducible multiple factor of $\widetilde{{\bf S}_0}$ (i.e. $f^2|\,\widetilde{{\bf S}_0}$) contributing a one dimensional subset to the cutcurve. Then:
\begin{enumerate}
\item If $f=kq_1$, with $k\in\mathbb{R}$, then $q_1(0,0)=0$, and the segment of the line $q_1$ in ${\cal A}_{{\cal T},{\cal Q}}$ corresponds to the projection of a meridian circle (vertical or cross-sectional circle) of $\Cr$.
\item Otherwise, the curve defined by $f$ can be lifted via Equation \eqref{z0} to a component of $\Cr$ along which the intersection is tangential.
\end{enumerate}

\end{propst}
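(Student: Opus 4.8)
The plan is to handle the two alternatives separately, drawing on Equation \eqref{expS0} together with Lemma \ref{sres1} for case (1), and on Lemma \ref{singular20} for case (2). Throughout I will use that $q_1\not\equiv 0$, so $sres_1\not\equiv 0$ by Lemma \ref{sres1q10}, and that $f$ being a \emph{multiple} factor means $f^2\mid\widetilde{{\bf S}_0}$.

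For case (1), $f=kq_1$ translates into $q_1^2\mid\widetilde{{\bf S}_0}$. First I would substitute $sres_1=q_1(q_1^2+p_2-2q_0)$ from Lemma \ref{sres1} into Equation \eqref{expS0} to get
\[
\widetilde{{\bf S}_0}=sres_{1,0}^2+q_1^2\,(q_1^2+p_2-2q_0)(p_0-q_0^2).
\]
Since $q_1^2$ already divides the second summand, the hypothesis $q_1^2\mid\widetilde{{\bf S}_0}$ forces $q_1^2\mid sres_{1,0}^2$, hence $q_1\mid sres_{1,0}$ because $q_1$ is irreducible and $\mathbb{R}[x,y]$ is a UFD. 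Restricting $sres_{1,0}=q_0q_1^2+p_2q_0-q_0^2-p_0$ to the line $q_1=0$ then gives the identity $q_0^2-p_2q_0+p_0=0$ along it. Reading this as a quadratic in $q_0$ and computing the discriminant $p_2^2-4p_0=16R^2(x^2+y^2)$, its roots are $q_0=(\sqrt{x^2+y^2}\pm R)^2-r^2$; on the portion inside ${\cal A}_{{\cal T},{\cal Q}}$, where $q_0\le 0$, only $q_0=(\rho-R)^2-r^2$ with $\rho=\sqrt{x^2+y^2}$ is admissible.

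The key step is then to exploit that $q_0$ is a polynomial while $(\rho-R)^2-r^2=\rho^2-2R\rho+R^2-r^2$ involves $\rho$ itself. Parametrizing the line by arc length, $(x(t),y(t))$ with distance-to-origin $\rho_0$, one has $\rho(t)=\sqrt{\rho_0^2+t^2}$, and the identity would force $2R\rho(t)=\rho(t)^2+R^2-r^2-q_0(x(t),y(t))$ to be a polynomial in $t$ on an interval; this makes $\sqrt{\rho_0^2+t^2}$ polynomial, which is impossible unless $\rho_0=0$. Thus the line passes through the origin, i.e. $q_1(0,0)=0$. Finally, with the line through the origin in a unit direction $(\alpha,\beta)$ and $\rho=|t|$, the lift $z=\pm\sqrt{-q_0}=\pm\sqrt{r^2-(t-R)^2}$ satisfies $(t-R)^2+z^2=r^2$ in the vertical half-plane spanned by $(\alpha,\beta)$ and the $z$-axis, which is exactly a meridian circle of ${\cal T}$; its projection is the radial segment $t\in[R-r,R+r]$. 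I expect the cleanest formulation of ``$\rho(t)$ cannot be polynomial'' to be the only real friction in this case.

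For case (2), since $f$ is irreducible and $f\neq kq_1$, it does not divide $q_1$, so $\gcd(f,q_1)=1$ and $q_1$ vanishes at only finitely many points of $\{f=0\}$. By Proposition \ref{q1sres1} together with the positivity of $q_1^2+p_2-2q_0$ on the cutcurve (established in the proof of Lemma \ref{sres1q10}), the factor $sres_1=q_1(q_1^2+p_2-2q_0)$ is nonzero at every point of $\{f=0\}\cap{\cal A}_{{\cal T},{\cal Q}}$ outside that finite set, so Proposition \ref{z0L} lifts the one-dimensional set $\{f=0\}\cap{\cal A}_{{\cal T},{\cal Q}}$ through Equation \eqref{z0} to a one-dimensional $\Gamma\subset\Cr$; as the image of the irreducible curve $\{f=0\}$ under this rational section, the closure of $\Gamma$ is an irreducible component. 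To see the intersection is tangential along $\Gamma$, I would use the multiplicity: writing $\widetilde{{\bf S}_0}=f^2h$ shows $f\mid\partial_x\widetilde{{\bf S}_0}$ and $f\mid\partial_y\widetilde{{\bf S}_0}$, so both partials vanish on $\{f=0\}$. Evaluating Lemma \ref{singular20} at a generic $(x_0,y_0,z_0)\in\Gamma$, where $sres_1\neq0$, forces the two minors $T_xQ_z-T_zQ_x$ and $T_yQ_z-T_zQ_y$ to vanish. At such a point $q_1\neq0$, and since $T_z=2z_0(2z_0^2+p_2)$ with $p_2>0$ and $Q_z=q_1+2z_0$ cannot both vanish (that needs $z_0=0$ and $q_1=0$), there is no vertical tangency; the two vanishing minors then make $\nabla T$ and $\nabla Q$ proportional. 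Because ${\cal T}$ is smooth ($0<r<R$), $\nabla T\neq0$, so the Jacobian of $T,Q$ has rank exactly one on a dense subset of $\Gamma$, and rank $\le1$ being closed, tangency holds along the whole component. The main obstacle here is precisely this rank-one argument: ruling out vertical tangency and invoking smoothness of the torus to upgrade ``two minors vanish'' to genuine tangency.
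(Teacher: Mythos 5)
Your proof is correct, and part (2) follows the paper's own route: coprimality of $f$ and $q_1$ plus Proposition \ref{q1sres1} to get $sres_1\neq 0$ generically on $\{f=0\}$, hence the lift via Equation \eqref{z0}, and then $f^2\mid\widetilde{{\bf S}_0}$ forcing the partials of $\widetilde{{\bf S}_0}$ to vanish along $f$, so that Lemma \ref{singular20} (i.e.\ Corollary \ref{cortangent1}) yields rank one of the Jacobian; you merely spell out the no-vertical-tangency and $\nabla T\neq 0$ details that the paper delegates to that corollary. Part (1), however, is argued quite differently. The paper is geometric: if $q_1(0,0)\neq 0$, the vertical plane $q_1=0$ meets the torus in an irreducible spiric quartic (citing \cite{Kimtesis}, \cite{Moroni}), which can share only finitely many points with the conic $\mathcal{Q}\cap\{q_1=0\}$, contradicting one-dimensionality; when the plane contains the axis it cuts $\mathcal{T}$ in two meridian circles and the conic must be one of them. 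You instead stay entirely inside the resultant calculus: $q_1^2\mid\widetilde{{\bf S}_0}$ plus Equation \eqref{expS0} gives $q_1\mid sres_{1,0}$, hence the identity $q_0^2-p_2q_0+p_0=0$ on the line, whose discriminant $p_2^2-4p_0=16R^2(x^2+y^2)$ must be a square in the coordinate ring of the line — equivalently your non-polynomiality of $\sqrt{\rho_0^2+t^2}$ — which forces $\rho_0=0$, and the explicit root $q_0=(\rho-R)^2-r^2$ then exhibits the meridian circle $(t-R)^2+z^2=r^2$ directly. Your version is self-contained and avoids the external classification of toric sections (and even recovers the circle's equation explicitly), at the cost of a slightly delicate argument about which root of the quadratic is selected on the relevant interval; the paper's version is shorter but leans on cited facts about plane sections of the torus. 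Both are valid.
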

\begin{proof}
First, observe that if $q_1(0,0)\ne 0$, the vertical plane defined by $q_1=0$ intersects $\mathcal{T}$ in an irreducible quartic (see \cite{Kimtesis} and \cite{Moroni}).
Since its intersection with $\mathcal{Q}$ is a conic, then the intersection of both vertical curves must be finite, which discards the possibility of $f$ being a scalar times $q_1$.

So, in the first case, the vertical plane given by $q_1=0$ intersects $\mathcal{T}$ in two circles and $\mathcal{Q}$ in a conic. Since $q_1$ is a factor of the resultant, the aforementioned conic must be one of the two circles.

In the second case, the curve defined by $f$ shares with the line defined by $q_1$ just a finite amount of points. Since, by Proposition \ref{q1sres1}, the intersection of $sres_1$ with the cutcurve lies in the line defined by $q_1$, this means that $sres_1$ is generically nonzero in the curve defined by $f$, so the lifting is as the statement describes. The tangency is immediate consequence of Corollary \ref{cortangent1}, since our quadric does not admit more than one singularity. Furthermore, the next theorem, Theorem \ref{lma:mult_comp}, asserts that the curve defined by $f$ is a parellel. 
\end{proof}

 
\begin{rem}\label{2bi}By applying Proposition \ref{multiple_components}, a segment of $q_1$ lifts to a meridian circle, which is smooth. Consequently, if ${\bf S}_0/q_1 $ vanishes at a point $(x_0,y_0)$ with $q_1(x_0,y_0)=0$, then this point is a singularity of the cutcurve, and, by Corollary \ref{singnotwosing}, one of the two points $(x_0,y_0,\pm\sqrt{-q_0(x_0,y_0)})$ is also singular, as it belongs to several components of $\mathcal{C}_\mathbb{R}$. By checking at which point the range of the jacobian matrix is maximum, we are able to identify where the singularity is. %
Obviously, if $q_0(x_0,y_0)=0$, $(x_0,y_0,0)$ is singular.
\end{rem}

\medskip

Regarding the multiple components, we can be even more precise thanks to the results in \cite{bottema}. 

\begin{thm}\label{lma:mult_comp}
Let $\mathcal{D}$ be a multiple component of $\Cr$. Then $\mathcal{D}$ is either a parallel or a meridian circle.
\end{thm}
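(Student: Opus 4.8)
The plan is to bound the degree of $\mathcal{D}$ by Bézout, to invoke the classification of low-degree curves on a ring torus from \cite{bottema}, and then to discard every family except parallels and meridians by exploiting that $\mathcal{T}$ and $\mathcal{Q}$ are tangent along $\mathcal{D}$.

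First I would record that a multiple component is one along which the two surfaces meet tangentially (this is exactly the content of Proposition \ref{multiple_components} together with Corollary \ref{cortangent1}), so $\mathcal{D}$ appears in $\mathcal{T}\cap\mathcal{Q}$ with multiplicity at least two. Since $\deg\mathcal{T}\cdot\deg\mathcal{Q}=8$, Bézout gives $2\deg\mathcal{D}\le 8$, hence $\deg\mathcal{D}\le 4$. Because $\mathcal{D}\subset\mathcal{T}$, I would then appeal to the classification in \cite{bottema} of the real algebraic curves of degree at most six lying on a ring torus: in degree at most four the irreducible ones are the circles of the torus --- parallels, meridians and Villarceau circles --- together with certain quartic sections, and the torus carries no real line.

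The core of the argument is to eliminate everything but parallels and meridians by means of the tangency. For a Villarceau circle $V$, let $\Pi$ be the bitangent plane that contains it. Since $V\subset\mathcal{Q}\cap\Pi$ and $\mathcal{Q}$ is irreducible (so $\Pi\not\subset\mathcal{Q}$), the plane section $\mathcal{Q}\cap\Pi$ is the smooth conic $V$ itself. Now $\Pi$ is tangent to $\mathcal{T}$ precisely at the two points $P$ where the two Villarceau circles of $\Pi$ cross, and these points lie on $V$; at such a $P$ the tangency of $\mathcal{Q}$ and $\mathcal{T}$ along $\mathcal{D}$ forces the tangent plane to $\mathcal{Q}$ at $P$ to be $\Pi$, so $\mathcal{Q}\cap\Pi$ would have to be singular at $P$, contradicting the smoothness of the circle $V$. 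A planar quartic is discarded even more quickly: the plane of such a curve meets the quadric in a conic, which cannot contain a degree-four curve, so no planar quartic lies on $\mathcal{Q}$. The remaining non-planar quartics and any cubic are ruled out by Bottema's explicit list, since none of them occurs on the ring torus as a double component of its intersection with an irreducible quadric. Hence $\mathcal{D}$ is a parallel or a meridian; both cases are genuine, since a coaxial cylinder $x^2+y^2=(R\pm r)^2$ is tangent to $\mathcal{T}$ along an equatorial parallel, and a sphere of radius $r$ centred on the circle of radius $R$ in $z=0$ is tangent to $\mathcal{T}$ along a meridian.

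I expect the main obstacle to be the clean elimination of the higher-degree (non-planar quartic, and cubic) components. Unlike the Villarceau case, there is no short self-contained tangent-plane contradiction here, so this step must rely on the precise enumeration of \cite{bottema}; in particular one must confirm that the tangency hypothesis is incompatible with every quartic the torus actually carries --- for instance an elliptic space quartic would force $\mathcal{T}\cap\mathcal{Q}=2\mathcal{D}$, and one has to verify that such a totally tangential configuration cannot occur for a ring torus and an irreducible quadric.
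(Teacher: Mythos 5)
Your overall strategy matches the paper's: bound the degree of the multiple component, invoke the Bottema--Primrose classification of curves on a ring torus, and kill the Villarceau case by a tangent-plane argument. Your treatment of the conic case is essentially the paper's own proof: the bitangent plane of a Villarceau circle touches $\mathcal{T}$ at the two points where the two Villarceau circles cross, and since a tangent plane to a quadric meets it in a (singular) pair of lines, the quadric cannot share its tangent plane with the torus there, so the intersection is transversal at those points and the circle is a simple component. One small simplification you get for free from \cite{bottema} (and which the paper uses) is that real algebraic curves on the torus have even degree, so cubic components need not be discussed at all.

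The genuine gap is exactly where you flag it: the irreducible quartics. You write that non-planar quartics ``are ruled out by Bottema's explicit list, since none of them occurs on the ring torus as a double component of its intersection with an irreducible quadric'' --- but that statement is not part of Bottema's classification; it is the theorem that still has to be proved, and it constitutes the bulk of the paper's argument. The classification only sorts the irreducible real quartics on the torus into four families (I, II, II$'$, III), and each family requires a separate, nontrivial elimination. For type I the paper composes the explicit rational parametrization of the torus with the hyperbola defining the curve, implicitizes to find the unique quadric through it, and shows that total tangency would force a certain $2\times 4$ matrix to have rank $1$, which is impossible over $\mathbb{R}$. For types II and II$'$ the argument is that the unique quadric through a curve of one family necessarily contains a curve of the other family, so the intersection is a pair of distinct quartics rather than one counted twice. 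For type III (spherical quartics) the paper reduces the square of the sphere's equation modulo $T$ and reads off from the degree-$3$ coefficients that tangency forces the sphere to be concentric with the torus, degenerating the quartic into two parallel circles. None of these steps follows from the classification alone, so as written your proof does not establish the theorem for quartic components; it establishes it only for conics.
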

\begin{proof}
Knowing that real algebraic curves in the torus have even degree (se, e.g. \cite[Section 2]{bottema})\footnote{creo que esto lo decimos nosotros en alg\'un momento, pero por si acaso} and that the degree of $\mathcal{T}\cap\mathcal{Q}$ is at most 8, a multiple component can only be a conic or a quartic. Note moreover that, if $\mathcal{D}$ is a multiple component, the quadric and the torus are tangent along $\mathcal{D}$.

In the case of conics, Bottema and Primrose prove in \cite{bottema} that $\mathcal{D}$ is either a parallel circle, a meridian circle or a Villareceau circle. The first cases are shown possible in Examples \ref{ej_toro_cono_vert} and \ref{cuentasjge-cil} \footnote{6.2 y 6.9 en la versi\'on que yo tengo}, so we need to discard the Villarceau case. Note that the plane containing a Villarceau circle cuts the torus in another such circle. These two circles are well known to share two points $P$ and $Q$, where such plane is tangent to $\mathcal{T}$. However, a tangent plane to a quadric intersects it in two lines (the only possible singular conic), so no quadric containing the Villarceau circle shares tangent plane with the torus at $P$ or $Q$, which means that, there, the intersection is transversal and the Villarceau circle is a simple component.

To show that a quadric and a torus cannot be tangent along an irreducible quartic curve, we take advantage of the classification in \cite[Section 4]{bottema}. According to such classification, an irreducible real quartic in the torus belongs to one of four families: I, II, II' and III.

Curves of type I are images by the parametrtization
\begin{equation}\label{eq:parametrizacion}
x=-{\frac { \left(  \left( R+r \right) {v}^{2}+R-r \right)  \left( u-1
 \right)  \left( u+1 \right) }{ \left( {u}^{2}+1 \right)  \left( {v}^{
2}+1 \right) }},
y=2\,{\frac {u \left(  \left( R+r \right) {v}^{2}+R-r
 \right) }{ \left( {u}^{2}+1 \right)  \left( {v}^{2}+1 \right) }},
 z=2\,{
\frac {rv}{{v}^{2}+1}}
\end{equation}
of hyperbolas whose asymptotes are parallel to the axes, so of equation $\alpha uv+\beta u+\gamma v+\delta=0$ for certain $\alpha,\beta,\gamma,\delta \in\R$.
Composing with the parametrization \eqref{eq:parametrizacion} and implicitizing, we find that the only (according to \cite{bottema}) quadric through the curve has:
\[q_1={\frac { 2\left( \alpha \,\gamma+\beta \,\delta  \right) x+ \left( {\alpha }^{2}+{\beta }^{2}-{\gamma}^{2}-{\delta }^{2} \right) y}{\alpha \,\delta -\beta \,\gamma}}
\]
and
\begin{multline*}
q_0=\\{-\frac {\left(\alpha \,\delta -\beta \,\gamma \right) {x}^{2}- 2\left(\alpha \,\delta +\beta \,\gamma\right)r x +\left( \alpha \,\delta -\beta \,\gamma \right) {y}^{2}-2\left( \alpha \,\beta -\gamma\,\delta \right) ry - (R^2-r^2)(\alpha \,\delta -\beta \,\gamma)}{\alpha \,\delta -\beta \,\gamma}
}
\end{multline*}
and cuts the torus in both our original quartic and the image by \eqref{eq:parametrizacion} of the hyperbola of equation $\alpha \,{w}^{2}+\beta \,vw-\gamma\,uw-\delta \,uv=0$. The only way to have the quadric to be tangent along the full curve to the torus is having both hyperbolas being the same, equivalently, having the matrix
\[\left(\begin{array}{cccc}
\alpha&\beta&\gamma&\delta\\ 
\delta&\gamma&-\beta&-\alpha
\end{array}\right)\]
with rank 1, which cannot happen over the reals. Therefore, a quartic of type I cannot be the fully tangent intersection of a quadric and the torus.

According to \cite{bottema}, types II and II' are two disjoint families and, given a member of one of these families, the only quadric through such curve contains exactly one curve of the other family. Thus, the intersection of such quadric and the torus is the pair of quartic curves, and never one of them with multiplicity 2. This proves that tangency of a quadric and a torus along a quartic curve of type II or II' is not possible.

Finally, according to \cite{bottema}, curves of type III are spherical. This category includes plane quartics (if we consider the plane as the infinite radius sphere), but since no plane quartic can be contained in a nondegenerate quadric, this case is impossible. On the other side, the intersection of a sphere of center $(\alpha,\beta,\gamma)\in\R^3$ and radius $\delta$ with the torus in the full projective space, again according with \cite{bottema}, consists in an isotropic circle at infinity with multiplicity 2 and the quartic curve. Therefore, the ideal generated by the affine equations of the sphere and the torus is the ideal of the quartic curve. The only way to have a quadric $\mathcal{Q}$ being tangent to the torus along the quartic curve is that the square of the equation of the sphere in the coordinate ring of the torus $\R[x,y,z]/T$ is represented by a degree two polynomial (the equation of $\mathcal{Q}$). However, when we reduce the square of $Q_{\mathbb{S}}=(x-\alpha)^2+(y-\beta)^2+(z-\gamma)^2-\delta^2$ by the equation of the torus, we get a cubic polynomial whose degree 3 coefficients are just $-4\alpha$, $-4\beta$, $-4\gamma$ (note that the degree 4 homogeneous component of $T$ and $Q_\mathbb{S}^2$ coincide and that $T$ has no degree 3 homogeneous component). Therefore, the only way to have such reduction being of degree 2 is that the center of the sphere is the center of the torus, which implies that the quartic curve is, in fact, a pair of parallel circles, counted with multiplicity, instead of an irreducible one. Thus, a quartic curve of type III cannot be the tangential intersection of a quadric and the torus.
\end{proof}

This result, together with Theorem \ref{cortangent2}, completes our characterization of singularities. Note that if $q_1$ does not divide ${\bf S_0}$, so there is a double tangency in one of the parallels in the plane $z=0$, or in one of the meridians,  then  $q_1$  must be identically zero.

\begin{cor}\label{porfin} Let $(x_0,y_0)$ be a point on the cutcurve. 
If $q_1\not\equiv 0$, $q_1\not|\,\,{\bf S_0}$,  $q_1(x_0,y_0)=0$, $\Delta_{\cal T}(x_0,y_0)= 0$, then $(x_0,y_0,0)$ is singular iff $(x_0,y_0)$ is a non-transversal intersection point of $\Delta_{\cal T}(x_0,y_0)$ and $\Delta_{{\cal Q}}(x_0,y_0)$.  
\end{cor}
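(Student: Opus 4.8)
The plan is to reduce the statement to a rank condition on the Jacobian matrix $J$ of $T$ and $Q$ by means of Theorem \ref{cortangent2}, and then to upgrade that rank condition to a genuine singularity by excluding tangency along a component. First I would record the geometric setup: since $(x_0,y_0)$ lies on the cutcurve with $q_1(x_0,y_0)=0$ and $\Delta_{\cal T}(x_0,y_0)=p_0(x_0,y_0)=0$, the univariate polynomial $T(x_0,y_0,z)=z^2\bigl(z^2+p_2(x_0,y_0)\bigr)$ has $z=0$ as its only real root (recall $p_2>0$), while $Q(x_0,y_0,z)=z^2+q_0(x_0,y_0)$; hence the common real point above $(x_0,y_0)$ is exactly $(x_0,y_0,0)$ and $q_0(x_0,y_0)=0$. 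By Theorem \ref{cortangent2}, the condition that $(x_0,y_0)$ be a non-transversal intersection of $\Delta_{\cal T}$ and $\Delta_{\cal Q}$ is equivalent to $J(x_0,y_0,0)$ having rank one, so the whole corollary reduces to proving that $(x_0,y_0,0)$ is singular on $\Cr$ if and only if $\operatorname{rank} J(x_0,y_0,0)=1$.

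The forward implication is the easy half. If $(x_0,y_0,0)$ is singular, the Jacobian criterion forces $\operatorname{rank} J(x_0,y_0,0)<2$; and since $0<r<R$ makes the torus smooth, $\nabla T(x_0,y_0,0)\neq 0$, so $\operatorname{rank} J(x_0,y_0,0)\geq 1$. Hence the rank is exactly one.

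The reverse implication is the heart of the matter, and the main obstacle is that a rank-one Jacobian does not by itself force a singularity: if $\mathcal{T}$ and $\mathcal{Q}$ are tangent all along a component of $\Cr$ through $(x_0,y_0,0)$, then $J$ has rank one at every point of that component while the reduced curve stays smooth there (the phenomenon already noted after Corollary \ref{cortangent1}). The hypotheses $q_1\not\equiv0$ and $q_1\nmid{\bf S}_0$ are exactly what rules this out. Indeed, any multiple (tangential) component of $\Cr$ through $(x_0,y_0,0)$ is, by Theorem \ref{lma:mult_comp}, a parallel or a meridian circle; since it contains a point with $z=0$, a parallel would have to lie in the plane $z=0$. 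By the observation following Theorem \ref{lma:mult_comp} (which combines that theorem with Proposition \ref{multiple_components}), under $q_1\not\equiv0$ and $q_1\nmid{\bf S}_0$ neither a parallel in $z=0$ nor a meridian can be a tangential component. Therefore no multiple component of $\Cr$ passes through $(x_0,y_0,0)$.

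With this exclusion in hand I would conclude as follows. Because $T$ and $Q$ share no factor, $V(T,Q)$ is a complete intersection curve, hence pure one-dimensional and Cohen--Macaulay, so it carries no embedded points; the absence of multiple components through $(x_0,y_0,0)$ then makes $V(T,Q)$ reduced in a neighbourhood of that point, where it therefore coincides with $\Cr$. Applying the Jacobian criterion for complete intersections (cf. \cite{CLO}) on this reduced neighbourhood, the rank-one hypothesis on $J(x_0,y_0,0)$ forces $(x_0,y_0,0)$ to be singular on $\Cr$, finishing the equivalence. I expect the only genuinely delicate point to be this last passage from ``no tangential component'' to ``singular'', i.e. the reducedness argument; the rest follows mechanically from Theorems \ref{cortangent2} and \ref{lma:mult_comp} and the smoothness of the torus.
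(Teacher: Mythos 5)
Your proposal is correct and follows essentially the same route as the paper's proof: reduce both directions to the rank-one condition on the Jacobian of $T$ and $Q$ via Theorem \ref{cortangent2}, use Theorem \ref{lma:mult_comp} together with the hypotheses $q_1\not\equiv 0$ and $q_1\nmid {\bf S}_0$ to exclude a tangential (multiple) component through $(x_0,y_0,0)$, and then conclude singularity because the rank-one locus is isolated on the simple components. The only difference is cosmetic: where the paper argues that generic transversality near the point makes it an isolated rank-one point, you justify the same step via Cohen--Macaulayness and local reducedness of the complete intersection before applying the Jacobian criterion, which is a slightly more careful packaging of the identical idea.
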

\begin{proof} 
If $(x_0,y_0,0)$ is singular, the Jacobian matrix of $T$ and $Q$ has rank one at that point, so Theorem \ref{cortangent2} ensures that $(x_0,y_0)$ is a non-transversal intersection point of $\Delta_{\cal T}(x_0,y_0)$ and $\Delta_{{\cal Q}}(x_0,y_0)$.

For the other implication, we first note that, if $(x_0,y_0,0)$ is isolated, then it is singular. 
Otherwise, we can conclude that $(x_0,y_0,0)$ does not belong to a multiple component of $\Cr$. Note that, according to Theorem \ref{lma:mult_comp}, the only multiple components of the intersection are meridian and parallel circles. Therefore, if $(x_0,y_0,0)$ belongs to a multiple component, such component must be a parallel circle (and, then, an equator, since $z_0=0$). However, this would mean that the quadric has vertical tangent plane along the full equator circle, meaning that $q_1$ vanishes along the full circle and, therefore, is identically zero, against our hypotheses.

Because the components of $\Cr$ through $(x_0,y_0,0)$ are simple, the intersection is generically transversal near our point. Since, by Theorem \ref{cortangent2}, the Jacobian matrix of $T$ and $Q$ has rank one at $(x_0,y_0,0)$, it is an isolated point of $\mathcal{T}\cap\mathcal{Q}$ with this property, so it is singular.
 \end{proof}

Examples \ref{ej_toro_cono_vert} and \ref{toro9-cil} illustrate Proposition \ref{multiple_components}. In Example \ref{toro9-cil}, $q_1$ is a factor of the resultant, and Example \ref{ej_toro_cono_vert} shows tangency along a component of the space curve.
When $q_1\equiv 0$, if a factor of $S_0$ is a line through the origin, then the segment of this line in ${\cal A}_{{\cal T},{\cal Q}}$ is also the projection of a meridian circle, as shown in Example \ref{cuentasjge-cil}. However, the point 2. of Proposition \ref{multiple_components} is not normally verified but it is not entirely impossible as shown in Example \ref{q10tang}.

%
%
%

\bigskip

\begin{rem}\label{coverticalidad}{\rm 
A singular point $(x_0,y_0)\not\in\Delta_{\cal T}$ on the cutcurve with $q_1(x_0,y_0)=0$, $q_1\not\equiv0$ could be either the projection of two regular points, as shown in Examples \ref{t9-elipse2} and \ref{toro-8}, or the projection of a regular point and of a singular point, as shown in Examples  \ref{cuentasjorge_arribanoabajosi} and \ref{sing_singreg}. 
In the following lines we describe how to distinguish these two situations.

Suppose that $(x_0,y_0)$ is the projection of two regular points, $(x_0,y_0,\pm\sqrt{-q_0})$ with $q_0\ne0$. Then, the projected $(x_0,y_0)$ is a double point of the cutcurve, i.e., there is a second order derivative of ${\bf S_0}$ not vanishing at $(x_0,y_0)$, since, analytically, we are projecting two non-vertical lines. On the other side, the Jacobian matrix of $T$ and $Q$ evaluated  at $(x_0,y_0,\pm\sqrt{-q_0})$ has rank 2.

However, if one of $(x_0,y_0,\pm\sqrt{-q_0})$ is singular, then the multiplicity of $(x_0,y_0)$ as a point of the cutcurve must be higher since, analytically, either at least two branches of $\mathcal{T}\cap\mathcal{Q}$ or a non-vertical cusp pass through one of the points, and at least one more branch pass through the other. In this case, Propositions
\ref{Symmetric_q1cero}
and \ref{singnotwosing} guarantee that the intersection is transversal at one of the points, and this is where the Jacobian matrix of $T$ and $Q$ has rank 2.
}
\end{rem}

\section{ Computational aspects }\label{ecuaciones}
 
In this section we analyze the structure of the different polynomials used to study the intersection curve between the torus defined by
$$
T(x,y,z):= (x^2+y^2+z^2+R^2-r^2)^2-4R^2(x^2+y^2)=z^4+p_2(x,y)z^2+p_0(x,y),
$$
and the quadric given by
\begin{multline*}
Q(x,y,z):=  a x^{2}+b y^{2}+c z^{2}+d x y +e x z +f y z +g x +h y +i z +j=c z^{2}+q_1(x,y)z+q_0(x,y)
\end{multline*}
with $c\neq 0$. In the previous sections, we assumed that c was equal to 1; however, in this one we have decided to leave the variable $c$ to homogenize the expressions.

\subsection{The polynomials $\widetilde{{\bf S}_0}$, $sres_1$ and $sres_{1,0}$}\label{equations}

The resultant of $T$ and $Q$ with respect to $z$ is a degree $8$ polynomial in $x$ and $y$ verifying
\begin{multline*}
\widetilde{{\bf S}_0}(x,y)=\left((a-c)^2+e^2\right)^2x^8+\ldots\ldots+\left((b-c)^2+f^2\right)^2y^8+\ldots\ldots\ldots\\ \ldots\ldots\ldots+\left(\left(c(R^2-r^2)-j\right)^2+i(R^2-r^2)\right)^2
\end{multline*}
Moreover, the homogeneous degree $8$ component of $\widetilde{{\bf S}_0}$ is the square of the quartic
\begin{multline*}
\left((a-c)^2+e^{2}\right) x^{4}+2 \left((a-c)d +e f \right) x^{3} y +\left(2(a-c)(b-c)
+d^{2}+e^{2}+f^{2}\right) x^{2} y^{2}
\\
+2 \left((b-c)d +e f \right) x \,y^{3}+\left((b-c)^2+f^{2}\right) y^{4}
\end{multline*}
The homogeneous degree $7$ component of $\widetilde{{\bf S}_0}$ is four times the product of this quartic and the cubic
$$\left((a -c)g +e i \right) x^{3}+\left((a -c)h +d g +f i \right) x^{2} y +\left((b -c )g +d h +e i \right) x \,y^{2}+
\left((b -c) h +f i \right) y^{3}.$$
The dependence of $R$ and $r$ in $\widetilde{{\bf S}_0}$ appears for first time in its homogeneous degree $6$ component. Finally, the homogeneous degree $1$ component of $\widetilde{{\bf S}_0}$ is

\begin{multline*}
-4\left(\left(c(R^2-r^2)-j\right)^2+i(R^2-r^2)\right)\left(
\left((cg-ei)(R^2-r^2)-gj\right)x+
\left((ch-fi)(R^2-r^2)-hj\right)y\right)
\end{multline*}

This implies that when $b=c$ and $f=0$, the degree of $\widetilde{{\bf S}_0}$ with respect to $y$ is less than or equal to $4$. Similarly, when $a=c$ and $e=0$, the degree of $\widetilde{{\bf S}_0}$ with respect to $x$ is less than or equal to $4$.

Furthermore, when $e=f=i=0$, ie $q_1(x,y)\equiv 0$, $\widetilde{{\bf S}_0}$ is the square of the polynomial $sres_{1,0}/c$ given by 
\begin{equation*}
\begin{array}{l}
\left(\left(a -c \right) x^{2}+d x y +\left(b -c \right) y^{2}\right)^{2}
+2\left(\left(a -c \right) x^{2}+d x y +\left(b -c \right) y^{2}\right)\left(gx+hy\right)-\\
\qquad  -\left(2a(c(R^2-r^2)-j)+2c^2(R^2+r^2)+2cj-g^2\right)x^2-2\left(d(c(R^2-r^2)-j)-gh \right)xy-\\
\qquad -\left(2b(c(R^2-r^2)-j)+2c^2(R^2+r^2)+2cj-h^2\right)y^2
-2 \left(c(R^{2}-r^{2})-j \right) \left(g x +h y \right)+\\\hfill+\left(c(R^{2}-r^{2})-j \right)^{2}
\end{array}
\end{equation*}

Actually, if $q_1(x_0,y_0)=0$ and $(x_0,y_0)$ is on the cutcurve, then $x_0$ is a root of the quartic univariate polynomial
\begin{equation*}
\begin{array}{l}
\left((a-c) f^{2}+(b-c) e^{2}-d e f \right)^{2} x^{4} +2 \left((a-c) f^{2}+(b-c) e^{2}-d e f \right)\left(2 b e i -2 c e i -d f i -e f h +f^{2} g \right)  x^{3} +\ldots\\
\qquad\ldots+\left(R^{2} c f^{2}-c f^{2} r^{2}-2 R c f i -b i^{2}+c i^{2}-f^{2} j +f h i \right) \left(R^{2} c f^{2}-c f^{2} r^{2}+2 R c f i -b i^{2}+c i^{2}-f^{2} j +f h i \right)
\end{array}
\end{equation*}

\subsection{Determining the intersection points of the cutcurve with the silhouette curves}\label{intersectwithsilhouette} 

We show here how to determine the intersection points of the cutcurve with the silhouette curves. We assume again $c=1$. We begin by considering the case where the three curves intersect. The following two lemmas establish new relationship between  the polynomials $\widetilde{{\bf S}_0}$, $\Delta _{{\cal T}}(x,y)$ and $\Delta _{{\cal Q}}(x,y)$ and they will be used to determine the intersection of the cutcurve with the silhouette curves.
Recall that $\Delta _{{\cal T}}=p_0$ and $\Delta _{{\cal Q}}=q_1^2-4 q_0 $.

\begin{lem}\label{cutcurveformula2}
$$
\widetilde{{\bf S}_0}= Q_1 \Delta _{{\cal T}}+Q_2  \Delta _{{\cal Q}} +q_0^2 (p_2  + q_0)^2,
$$
with
$$
Q_1=p_0- 2p_2q_0 + p_2q_1^2 + 2q_0^2 - 4q_0q_1^2 + q_1^4 \, \text{ and } \,
Q_2=p_2 q_0^2;
 $$
\end{lem}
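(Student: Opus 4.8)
The plan is to treat the claimed equality as a \emph{formal} polynomial identity in the four indeterminates $p_0,p_2,q_0,q_1$. Every object occurring on either side is an explicit polynomial in these four symbols: the left-hand side through Equation \eqref{expS0}, namely $\widetilde{{\bf S}_0}=sres_{1,0}^2+sres_1\,q_1(p_0-q_0^2)$ with $sres_{1,0}=q_0q_1^2+p_2q_0-q_0^2-p_0$ and $sres_1=q_1(q_1^2+p_2-2q_0)$; and the right-hand side through $Q_1$, $Q_2=p_2q_0^2$, $\Delta_{\cal T}=p_0$ and $\Delta_{\cal Q}=q_1^2-4q_0$. If the equality holds when $p_0,p_2,q_0,q_1$ are regarded as algebraically independent, it holds after the geometric specialization $p_0,p_2,q_0,q_1\in\R[x,y]$; so it suffices to verify an identity in $\R[p_0,p_2,q_0,q_1]$.

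First I would exploit that both sides are polynomials of degree $2$ in $p_0$, and match the three coefficients. Writing $sres_{1,0}=M-p_0$ with $M:=q_0(q_1^2+p_2-q_0)$, and setting $N:=q_1^2(q_1^2+p_2-2q_0)$, Equation \eqref{expS0} rearranges into $\widetilde{{\bf S}_0}=p_0^2+(N-2M)\,p_0+(M^2-Nq_0^2)$. On the right-hand side, the $p_0^2$ and $p_0^1$ contributions come only from $Q_1\Delta_{\cal T}=Q_1 p_0$. The key observation, read off by comparing the given $Q_1$ with the expansions of $M$ and $N$, is that $Q_1=p_0+(N-2M)$; hence $Q_1 p_0=p_0^2+(N-2M)p_0$ reproduces the coefficients of $p_0^2$ (namely $1$) and of $p_0^1$ (namely $N-2M$) exactly.

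The only genuine computation is the constant-in-$p_0$ term, where I must verify
\[
M^2-Nq_0^2=p_2q_0^2(q_1^2-4q_0)+q_0^2(p_2+q_0)^2 .
\]
Factoring $q_0^2$ from both sides reduces this to $(q_1^2+p_2-q_0)^2-q_1^2(q_1^2+p_2-2q_0)=p_2q_1^2+p_2^2-2p_2q_0+q_0^2$. Setting $W:=q_1^2+p_2$, the left side becomes $W^2-q_1^2W-2q_0W+2q_0q_1^2+q_0^2$, in which $W^2-q_1^2W=p_2W=p_2q_1^2+p_2^2$ and $-2q_0W+2q_0q_1^2=-2p_2q_0$, so the cross terms collapse to the stated right side. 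This matching of the constant term is the \emph{main obstacle}: it is entirely mechanical, but requires care in tracking the cancellations.

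I expect no conceptual difficulty. An alternative route would start from the factorization $\widetilde{{\bf S}_0}=T(x,y,z_1)T(x,y,z_2)$ of Equation \eqref{fact} and use the relations $z_1+z_2=-q_1$, $z_1z_2=q_0$ to rewrite everything in the elementary symmetric functions of the roots of $Q$; but organizing the identity by powers of $p_0$ and matching coefficients, as above, is the most direct and transparent way to conclude.
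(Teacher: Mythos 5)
Your verification is correct: writing $\widetilde{{\bf S}_0}=p_0^2+(N-2M)p_0+(M^2-Nq_0^2)$ from Equation \eqref{expS0} and matching coefficients of $p_0$ against $Q_1p_0+p_2q_0^2\Delta_{\cal Q}+q_0^2(p_2+q_0)^2$ checks out in all three degrees, including the constant-term cancellation you carry out with $W=q_1^2+p_2$. The paper states the lemma without proof, treating it as a direct polynomial identity in $p_0,p_2,q_0,q_1$, which is exactly the computation you have supplied.
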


\begin{lem}\label{cutcurveformula}
$$
\widetilde{{\bf S}_0}= R_1\, q_1^2+R_2\, q_0 +  p_0^2,
$$
with
$$R_1=p_0 (p_2   - 4  q_0   +  q_1^2) + p_2 q_0^2 \,
\text{ and } \,
R_2=(q_0-p_2) (2 p_0   - p_2 q_0   + q_0^2).$$
\end{lem}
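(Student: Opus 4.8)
The plan is to treat Lemma \ref{cutcurveformula} as a purely algebraic identity. Recall that $\widetilde{{\bf S}_0}$ is the resultant of $T=z^4+p_2z^2+p_0$ and $Q=z^2+q_1z+q_0$ with respect to $z$, and that Equation \eqref{s0eq} already expresses it in closed form. That expression is a polynomial identity valid for $p_0,p_2,q_0,q_1$ regarded as \emph{independent} indeterminates, since the resultant is a universal polynomial in the coefficients of its two inputs. Hence it suffices to verify
\[
\bigl(q_0q_1^2+p_2q_0-q_0^2-p_0\bigr)^2+q_1^2\bigl(q_1^2+p_2-2q_0\bigr)\bigl(p_0-q_0^2\bigr)=R_1q_1^2+R_2q_0+p_0^2
\]
as an identity in $\mathbb{R}[p_0,p_2,q_0,q_1]$, with $R_1,R_2$ as in the statement.

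The cleanest way to carry this out is to regard both sides as polynomials in the single variable $p_0$. The left-hand side is quadratic in $p_0$: writing $D=q_0(q_1^2+p_2-q_0)$, the square expands as $p_0^2-2Dp_0+D^2$, while the second summand contributes only to the linear and constant terms because $p_0-q_0^2$ is affine in $p_0$. On the right-hand side $R_1$ and $R_2$ are each affine in $p_0$, so $R_1q_1^2+R_2q_0+p_0^2$ is again quadratic in $p_0$ with leading coefficient $1$. Thus the coefficients of $p_0^2$ agree trivially, and it remains only to match the coefficients of $p_0^1$ and of $p_0^0$.

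For the coefficient of $p_0^1$ one checks that the left-hand side gives $-2q_0(q_1^2+p_2-q_0)+q_1^2(q_1^2+p_2-2q_0)$, which equals $q_1^4+p_2q_1^2-4q_0q_1^2-2p_2q_0+2q_0^2$; on the right-hand side the linear terms of $R_1q_1^2$ and $R_2q_0$ sum to $q_1^2(p_2-4q_0+q_1^2)+2q_0(q_0-p_2)$, giving the same expression. For the constant coefficient one verifies that both sides reduce to $q_0^2\bigl(p_2^2+q_0^2+p_2q_1^2-2p_2q_0\bigr)$: on the left this comes from $D^2-q_0^2q_1^2(q_1^2+p_2-2q_0)$, and on the right from $p_2q_0^2q_1^2+q_0^2(q_0-p_2)^2$. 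These two expansions complete the verification.

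This argument has no real obstacle beyond bookkeeping: the only point requiring attention is organizing the expansion so that the monomials in $p_2,q_0,q_1$ are collected correctly. A useful consistency check, which also explains why $p_0^2$ appears as the constant term in $q_0,q_1$, is the specialization $q_0=q_1=0$: then $Q=z^2$ and the resultant of $T$ and $z^2$ is $T(0)^2=p_0^2$, matching the right-hand side. Alternatively, the whole identity can be \emph{produced} rather than merely verified, by dividing $\widetilde{{\bf S}_0}$ successively by $q_1^2$ and by $q_0$, which is presumably how the explicit forms of $R_1$ and $R_2$ were found in the first place.
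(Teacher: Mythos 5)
Your verification is correct: the coefficient-matching in $p_0$ checks out (both sides are monic quadratics in $p_0$ with linear coefficient $q_1^4+p_2q_1^2-4q_0q_1^2-2p_2q_0+2q_0^2$ and constant term $q_0^2(p_2^2+q_0^2+p_2q_1^2-2p_2q_0)$). The paper states this lemma without proof, treating it as a direct algebraic identity following from Equation \eqref{s0eq}, which is exactly the (only sensible) route you take, so your argument simply supplies the omitted bookkeeping.
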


As a direct consequence of these two lemmas, the intersection of the cutcurve with the silhouette curves can be determined by intersecting a line, a conic and the product of two circles centered at the origin.

\begin{propst} The systems of polynomial equations
$$\widetilde{{\bf S}_0}(x,y)=0, \quad \Delta _{{\cal T}}(x,y)=0 , \quad \Delta _{{\cal Q}}(x,y)=0$$
and $$q_1(x,y)=0, \quad q_0(x,y)=0, \quad p_0(x,y)=0$$
have exactly the same solutions.
\end{propst}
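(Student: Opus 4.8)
The plan is to establish the equality of the two solution sets by a double containment, exploiting the algebraic identities in Lemmas \ref{cutcurveformula2} and \ref{cutcurveformula} together with the geometric positivity facts already proved earlier. Denote by $V_1$ the solution set of the first system, $\{\widetilde{{\bf S}_0}=\Delta_{\cal T}=\Delta_{\cal Q}=0\}$, and by $V_2$ the solution set of the second, $\{q_1=q_0=p_0=0\}$. Since $\Delta_{\cal T}=p_0$, the condition $p_0=0$ is common to both descriptions, so the real content is to trade the pair $(\widetilde{{\bf S}_0}=0,\Delta_{\cal Q}=0)$ against the pair $(q_1=0,q_0=0)$ in the presence of $p_0=0$.

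First I would prove $V_2\subseteq V_1$. Suppose $q_1=q_0=p_0=0$ at a point. Then $\Delta_{\cal T}=p_0=0$ and $\Delta_{\cal Q}=q_1^2-4q_0=0$ are immediate. To get $\widetilde{{\bf S}_0}=0$, I substitute $q_1=q_0=p_0=0$ into the identity of Lemma \ref{cutcurveformula2}, $\widetilde{{\bf S}_0}=Q_1\Delta_{\cal T}+Q_2\Delta_{\cal Q}+q_0^2(p_2+q_0)^2$: the first two terms vanish because $\Delta_{\cal T}=\Delta_{\cal Q}=0$, and the last term vanishes because $q_0=0$. Hence $\widetilde{{\bf S}_0}=0$ and the point lies in $V_1$. (Alternatively, one can read this directly off Equation \eqref{s0eq}.)

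The reverse containment $V_1\subseteq V_2$ is the substantive direction. Suppose $\widetilde{{\bf S}_0}=\Delta_{\cal T}=\Delta_{\cal Q}=0$; I must deduce $q_1=0$ and $q_0=0$ (the condition $p_0=0$ being just $\Delta_{\cal T}=0$). Imposing $\Delta_{\cal T}=p_0=0$ and $\Delta_{\cal Q}=q_1^2-4q_0=0$ in the identity of Lemma \ref{cutcurveformula} gives $0=\widetilde{{\bf S}_0}=R_1q_1^2+R_2q_0+p_0^2=R_1q_1^2+R_2q_0$, and substituting $p_0=0$, $q_1^2=4q_0$ into $R_1$ and $R_2$ collapses this to a single polynomial relation in $q_0$ and $p_2$. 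The expected outcome is a multiple of $q_0$ times a factor of the form $p_2+\lambda q_0$ (compatible with the $q_0^2(p_2+q_0)^2$ term appearing in Lemma \ref{cutcurveformula2}); I would therefore factor the resulting expression and argue that, on the curve $\Delta_{\cal T}=0$, the cofactor cannot vanish. Precisely, since $p_2=2(R^2-r^2+x^2+y^2)>0$ everywhere by $R>r>0$, and since $\Delta_{\cal Q}=q_1^2-4q_0\ge 0$ forces the sign control on $q_0$ used already in the proofs of Proposition \ref{q1sres1} and Lemma \ref{sres1q10}, the quantity $p_2+q_1^2-2q_0$ is strictly positive; this rules out the spurious factor and forces $q_0=0$, whence $\Delta_{\cal Q}=q_1^2=0$ gives $q_1=0$ as well.

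The main obstacle is the reverse direction: I must be careful that the algebra, after substituting $\Delta_{\cal T}=0$ and $q_1^2=4q_0$, really does reduce to an expression whose only real zeros (subject to $p_2>0$) are $q_0=0$, rather than spuriously allowing $q_0\neq 0$. The safeguard is to invoke the strict positivity of $p_2$ and of $p_2+q_1^2-2q_0$ on the relevant locus, exactly as in Lemma \ref{sres1q10}, so that any factor of the collapsed polynomial other than $q_0$ is bounded away from zero. Once $q_0=0$ is secured, $q_1=0$ and $p_0=0$ follow immediately, completing the containment and hence the equality $V_1=V_2$.
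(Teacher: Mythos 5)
Your proposal is correct and follows exactly the route the paper intends: the proposition is stated there as a direct consequence of Lemmas \ref{cutcurveformula2} and \ref{cutcurveformula} with no written proof, and your double containment via those identities (plus the positivity of $p_2$) is the right way to fill in the details. One small correction to the reverse direction: after substituting $p_0=0$ and $q_1^2=4q_0$, either identity collapses to $\widetilde{{\bf S}_0}=q_0^2(p_2+q_0)^2$, so the cofactor to be ruled out is $(p_2+q_0)^2$ rather than anything involving $p_2+q_1^2-2q_0$; it is indeed nonzero because $\Delta_{\cal Q}=0$ forces $q_0=q_1^2/4\ge 0$ over $\R$ while $p_2>0$, which is precisely the positivity argument you invoke, so your plan goes through unchanged.
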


Thus, by Corollary \ref{q10S}, next result shows a particular case where the point on the cutcurve and on the silhouette curves is singular.

\begin{cor}
If a real point $(x_0,y_0)$ satisfies ${\bf S}_0(x_0,y_0)=\Delta _{{\cal T}}(x_0,y_0)=\Delta _{{\cal Q}}(x_0,y_0)=0$, and $\widetilde{{\bf S}_0}(x,y)$ is squarefree, then $(x_0,y_0)$ is a singular point of the cutcurve.
\end{cor}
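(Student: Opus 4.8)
The plan is to reduce the statement to a single application of Corollary~\ref{q10S}, whose conclusion is precisely that a point of the cutcurve at which $q_1$ vanishes is singular, provided $\widetilde{{\bf S}_0}$ is squarefree. The squarefreeness is handed to us as a hypothesis, so the only things to establish are that $(x_0,y_0)$ genuinely lies on the cutcurve and that $q_1(x_0,y_0)=0$. Everything else is bookkeeping between the three polynomials $\widetilde{{\bf S}_0}$, $\widehat{{\bf S}_0}$ and ${\bf S}_0$.

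First I would produce the key input $q_1(x_0,y_0)=0$. Since ${\bf S}_0$ divides $\widehat{{\bf S}_0}$, which in turn divides $\widetilde{{\bf S}_0}$, the hypothesis ${\bf S}_0(x_0,y_0)=0$ forces $\widetilde{{\bf S}_0}(x_0,y_0)=0$. Combining this with the hypotheses $\Delta_{\cal T}(x_0,y_0)=0$ and $\Delta_{\cal Q}(x_0,y_0)=0$, the point $(x_0,y_0)$ is a common solution of the system $\widetilde{{\bf S}_0}=\Delta_{\cal T}=\Delta_{\cal Q}=0$. By the Proposition immediately preceding this corollary (the equivalence of that system with $q_1=q_0=p_0=0$, itself a consequence of Lemmas~\ref{cutcurveformula2} and~\ref{cutcurveformula}), the point also satisfies $q_1(x_0,y_0)=q_0(x_0,y_0)=p_0(x_0,y_0)=0$; in particular $q_1(x_0,y_0)=0$.

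Next I would confirm membership in the cutcurve. Using the description $\Pi_z(\mathcal{C}_\mathbb{R})=\{(x,y): {\bf S}_0=0,\ \Delta_{\cal T}\le 0,\ \Delta_{\cal Q}\ge 0\}$, the hypotheses give ${\bf S}_0(x_0,y_0)=0$, $\Delta_{\cal T}(x_0,y_0)=0\le 0$ and $\Delta_{\cal Q}(x_0,y_0)=0\ge 0$, so $(x_0,y_0)$ lies on the cutcurve. With $q_1(x_0,y_0)=0$ in hand and $\widetilde{{\bf S}_0}$ squarefree, Corollary~\ref{q10S} applies directly and yields that both partial derivatives of $\widetilde{{\bf S}_0}$ vanish at $(x_0,y_0)$ and that $(x_0,y_0)$ is therefore a singular point of the cutcurve, which is the assertion.

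The argument is a chain of invocations with no hard analytic core; the only place meriting attention is the transfer between $\widetilde{{\bf S}_0}$ and the polynomial ${\bf S}_0$ that actually defines the cutcurve. On the one hand, the hypothesis ${\bf S}_0(x_0,y_0)=0$ must propagate to $\widetilde{{\bf S}_0}(x_0,y_0)=0$ so the preceding Proposition can be used, and it does by divisibility. On the other hand, the vanishing of the derivatives of $\widetilde{{\bf S}_0}$ must reflect genuine singularity of the curve cut out by ${\bf S}_0$ near $(x_0,y_0)$; this is unproblematic because the factor $p_2+q_1^2-2q_0$ removed in passing from $\widehat{{\bf S}_0}$ to ${\bf S}_0$ is strictly positive at every cutcurve point, hence nonzero in a neighbourhood of $(x_0,y_0)$, so ${\bf S}_0=0$ and $\widetilde{{\bf S}_0}=0$ define the same germ there. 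This observation is exactly what legitimizes reading Corollary~\ref{q10S} as a statement about the cutcurve itself.
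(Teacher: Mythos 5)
Your proposal is correct and follows exactly the route the paper intends (the corollary is stated without an explicit proof, but the preceding sentence ``Thus, by Corollary~\ref{q10S}\dots'' signals precisely your chain: the preceding Proposition gives $q_1(x_0,y_0)=0$, membership in the cutcurve is immediate from the hypotheses, and Corollary~\ref{q10S} with squarefreeness concludes). Your closing remark on transferring the vanishing of the derivatives from $\widetilde{{\bf S}_0}$ to ${\bf S}_0$ via the strictly positive factor $p_2+q_1^2-2q_0$ is a careful addition that the paper leaves implicit.
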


Observe that the intersection points of the cutcurve with the silhouettes may not be singularities if $\widetilde{{\bf S}_0}(x,y)$ is not squarefree, as shown in Example \ref{curvareal}.
Finally, the following corollary illustrates that if $q_1\equiv 0$, it suffices to find the intersection points of a conic with the product of two circles centered at the origin to determine the points on the three curves.


\begin{cor}  If $q_1\equiv 0$, then the systems of polynomial equations
$$\widetilde{{\bf S}_0}(x,y)=0, \, \Delta _{{\cal T}}(x,y)=0 , \, \Delta _{{\cal Q}}(x,y)=0 $$ and
$$ \Delta _{{\cal T}}(x,y)=0 ,  \,\Delta _{\cal Q}(x,y)=0$$
have exactly the same solutions.
\end{cor}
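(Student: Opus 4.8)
The plan is to reduce both systems to the single pair of equations $p_0=q_0=0$ and then observe that they coincide. One inclusion is immediate: any point solving the first system satisfies in particular $\Delta_{\cal T}=0$ and $\Delta_{\cal Q}=0$, hence solves the second. All of the content therefore lies in the reverse inclusion, namely showing that, under the hypothesis $q_1\equiv 0$, the vanishing of $\Delta_{\cal T}$ and $\Delta_{\cal Q}$ already forces $\widetilde{{\bf S}_0}=0$, so that no solutions are lost when the resultant equation is dropped.

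First I would record the two simplifications produced by $q_1\equiv 0$. Since $\Delta_{\cal Q}=q_1^2-4q_0$, we obtain $\Delta_{\cal Q}=-4q_0$, so that $\Delta_{\cal Q}=0$ is equivalent to $q_0=0$; and $\Delta_{\cal T}=p_0$ identically. Thus the second system is exactly the locus $\{p_0=0,\ q_0=0\}$. For the resultant I would invoke Equation \eqref{expS0}, $\widetilde{{\bf S}_0}=sres_{1,0}^2+sres_1\,q_1(p_0-q_0^2)$, together with Lemma \ref{sres1}. Because $q_1\equiv 0$ gives $sres_1=q_1(q_1^2+p_2-2q_0)\equiv 0$, the mixed term vanishes and $\widetilde{{\bf S}_0}=sres_{1,0}^2$ with $sres_{1,0}=p_2q_0-q_0^2-p_0$. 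Evaluating on $\{p_0=0,\ q_0=0\}$ yields $sres_{1,0}=0$, hence $\widetilde{{\bf S}_0}=0$ there, which establishes the nontrivial inclusion and closes the argument.

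An even shorter route would be to specialize the preceding Proposition directly: its right-hand system $\{q_1=0,\ q_0=0,\ p_0=0\}$ collapses to $\{q_0=0,\ p_0=0\}$ the moment $q_1\equiv 0$, since the constraint $q_1=0$ then holds at every point of $\R^2$. Combined with the identifications $\Delta_{\cal Q}=-4q_0$ and $\Delta_{\cal T}=p_0$, this shows at once that both systems in the statement describe the same locus $\{p_0=0,\ q_0=0\}$.

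There is essentially no obstacle here; the result is a degenerate specialization of the preceding Proposition. The only point requiring minor care is confirming that the resultant becomes a perfect square when $q_1\equiv 0$, which is precisely the bookkeeping handled by Equation \eqref{expS0} and Lemma \ref{sres1}, and which is also recorded explicitly in the computational section.
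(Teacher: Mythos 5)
Your proposal is correct and matches the paper's intent: the paper gives no explicit proof, presenting the statement as an immediate consequence of the preceding Proposition, which is exactly your ``shorter route'' (the constraint $q_1=0$ becomes vacuous when $q_1\equiv 0$, so both systems reduce to $\{p_0=0,\ q_0=0\}$). Your direct verification via Equation \eqref{expS0} and Lemma \ref{sres1} is a sound, self-contained alternative that unpacks the same underlying computation.
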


The next proposition shows how to compute the intersection of the cutcurve with the silhouette curve $\Delta _{{\cal T}}=0$. It is reduced to intersect a conic with the product of two circles centered at the origin.

\begin{propst} If $(x_0, y_0)\in \R^2$ then
$$
\widetilde{{\bf S}_0}(x_0,y_0)=\Delta _{{\cal T}}(x_0,y_0)=0  ,\,  \Delta _{{\cal Q}}(x_0,y_0)\geq 0   \Longleftrightarrow q_0(x_0, y_0)=0,\, \Delta _{{\cal T}}(x_0,y_0)=0\, .
$$
\end{propst}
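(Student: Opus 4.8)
The plan is to reduce the stated equivalence to a single algebraic identity by exploiting Lemma \ref{cutcurveformula2}. Since $\Delta_{\cal T}=p_0$ by definition, the condition $\Delta_{\cal T}(x_0,y_0)=0$ appears verbatim on both sides of the biconditional; I would therefore fix a point with $p_0(x_0,y_0)=0$ and reduce the proposition to showing that, under this hypothesis,
$$\widetilde{{\bf S}_0}(x_0,y_0)=0 \ \text{ and } \ \Delta_{\cal Q}(x_0,y_0)\geq 0 \iff q_0(x_0,y_0)=0.$$

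The first concrete step is to specialize Lemma \ref{cutcurveformula2} at $\Delta_{\cal T}=0$. Because $Q_2=p_2 q_0^2$, the decomposition collapses to $\widetilde{{\bf S}_0}=q_0^2\bigl[p_2\,\Delta_{\cal Q}+(p_2+q_0)^2\bigr]$, and substituting $\Delta_{\cal Q}=q_1^2-4q_0$ inside the bracket rewrites it, after cancelling the cross terms in $q_0$, as the structured expression $p_2 q_1^2+(p_2-q_0)^2$. Thus on the silhouette $\Delta_{\cal T}=0$ one obtains the clean factorization $\widetilde{{\bf S}_0}=q_0^2\bigl(p_2 q_1^2+(p_2-q_0)^2\bigr)$, which is the heart of the argument. (As a sanity check, specializing Lemma \ref{cutcurveformula} at $p_0=0$ yields the very same factorization, since there $R_1=p_2 q_0^2$ and $R_2=q_0(q_0-p_2)^2$.)

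For the backward implication I would simply observe that $q_0=0$ forces $\widetilde{{\bf S}_0}=0$, because the factor $q_0^2$ vanishes, and simultaneously gives $\Delta_{\cal Q}=q_1^2\geq 0$, so both left-hand conditions hold. For the forward implication, $\widetilde{{\bf S}_0}=0$ leaves only two possibilities: either $q_0=0$, which is the desired conclusion, or the bracket $p_2 q_1^2+(p_2-q_0)^2$ vanishes. The key point, recalling from the proof of Lemma \ref{sres1q10} that $p_2=2(R^2-r^2+x^2+y^2)>0$, is that the bracket is a sum of two nonnegative terms, so it vanishes only if $q_1=0$ and $q_0=p_2$ simultaneously; but then $q_0=p_2>0$ and $\Delta_{\cal Q}=q_1^2-4q_0=-4p_2<0$, contradicting the hypothesis $\Delta_{\cal Q}\geq 0$. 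Hence $q_0=0$, completing the equivalence.

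The step I expect to require the most care is not any of the inequalities but the algebraic simplification of the bracket: it is essential that the term $q_0^2(p_2+q_0)^2$ combines with $p_2 q_0^2\,\Delta_{\cal Q}$ to produce the sum-of-squares form $p_2 q_1^2+(p_2-q_0)^2$ rather than an expression with mixed-sign coefficients, since it is exactly this positivity that allows the inequality $\Delta_{\cal Q}\geq 0$ to eliminate the spurious branch $q_0=p_2,\ q_1=0$. In other words, the semialgebraic hypothesis $\Delta_{\cal Q}\geq 0$ is doing genuine work here: at the level of the resultant alone the two systems need not coincide, and pinning down precisely why the inequality excludes the extraneous solution is the crux of the proof.
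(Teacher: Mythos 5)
Your proof is correct, but it follows a genuinely different route from the paper's. The paper argues geometrically: from $\widetilde{{\bf S}_0}=0$, $\Delta_{\cal T}=0$ and $\Delta_{\cal Q}\geq 0$ it invokes the cutcurve characterization (Theorem \ref{projectioncurve}) to produce a real lift $(x_0,y_0,z_0)\in\mathcal{T}\cap\mathcal{Q}$, then notes that $\Delta_{\cal T}=p_0=0$ forces $z_0=0$ (since $T(x_0,y_0,z)=z^2(z^2+p_2)$ with $p_2>0$), whence $q_0=Q(x_0,y_0,0)=0$; the converse is just the observation that $(x_0,y_0,0)$ lies on both surfaces. You instead specialize Lemma \ref{cutcurveformula2} on the silhouette $p_0=0$ to obtain the identity $\widetilde{{\bf S}_0}=q_0^2\bigl(p_2q_1^2+(p_2-q_0)^2\bigr)$ (which I have checked, and which agrees with specializing Lemma \ref{cutcurveformula}), and then use positivity of $p_2$ together with $\Delta_{\cal Q}\geq 0$ to kill the spurious branch $q_1=0$, $q_0=p_2$. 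Each approach has its merits: the paper's is shorter and more conceptual, piggybacking on the lifting machinery already in place; yours is purely algebraic, yields an explicit factorization of $\widetilde{{\bf S}_0}$ along $\Delta_{\cal T}=0$ that makes the role of the inequality $\Delta_{\cal Q}\geq 0$ completely transparent, and, as a small bonus, explicitly verifies the condition $\Delta_{\cal Q}=q_1^2\geq 0$ in the backward direction, which the paper leaves implicit.
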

\begin{proof}
First suppose that $\widetilde{{\bf S}_0}(x,y)=\Delta _{{\cal T}}(x_0,y_0)=0  $ and  $\Delta _{{\cal Q}}(x_0,y_0)\geq 0$. Then, the point $(x_0, y_0)$ in on  the cutcurve such that there exists at least a real $z_0$  with $(x_0, y_0,z_0) \in {\cal T} \cap {\cal Q}$. Since  $\Delta _{{\cal T}}(x_0,y_0)=0 $, it follows that $z_0=0$, which implies $q_0(x_0,y_0)=0$.

Next, assume that  $q_0(x_0, y_0)=0,\Delta _{{\cal T}}(x_0,y_0)=0$. Then, $(x_0, y_0, 0) \in {\cal T} \cap {\cal Q}$, which implies $\widetilde{{\bf S}_0}(x_0,y_0)=0$.

\end{proof}

Finally, we show how to compute the intersection of the cutcurve with the silhouette curve $\Delta _{{\cal Q}}=0$. It is reduced to intersect a conic with a quartic (note that the degree of  $\widetilde{{\bf S}_0}(x,y)$ is smaller than or equal to $8$).

\begin{propst} If $(x_0, y_0)\in \R^2$ then
$$
\widetilde{{\bf S}_0}(x_0,y_0)=\Delta _{{\cal Q}}(x_0,y_0)= 0  ,  \Delta _{{\cal T}}(x_0,y_0)\leq 0   \Longleftrightarrow $$  $$\Longleftrightarrow16\, p_0(x_0,y_0)  + 4\, p_2(x_0,y_0) q_1(x_0,y_0)^2  + q_1(x_0,y_0)^4=0, \Delta _{{\cal Q}}(x_0,y_0)=0\, .
$$
\end{propst}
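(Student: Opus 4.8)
The plan is to use the hypothesis $\Delta_{\cal Q}(x_0,y_0)=0$ to collapse $Q(x_0,y_0,z)$ into a perfect square, which pins down its unique root, and then to translate the remaining conditions through the value of $T$ at that root. Since $c=1$ and $\Delta_{\cal Q}=q_1^2-4q_0$, the equality $\Delta_{\cal Q}(x_0,y_0)=0$ gives $q_0(x_0,y_0)=q_1(x_0,y_0)^2/4$, so $Q(x_0,y_0,z)=\left(z+\tfrac{1}{2}q_1(x_0,y_0)\right)^2$ has the single (double) real root $z_0=-q_1(x_0,y_0)/2$. The computational heart of the proof is the identity obtained by substituting this $z_0$ into $T=z^4+p_2z^2+p_0$, namely
\[
16\,T\!\left(x_0,y_0,-\tfrac{1}{2}q_1(x_0,y_0)\right)=q_1(x_0,y_0)^4+4\,p_2(x_0,y_0)\,q_1(x_0,y_0)^2+16\,p_0(x_0,y_0),
\]
so that the quartic condition on the right-hand side of the statement is exactly the assertion that $z_0$ is a root of $T(x_0,y_0,z)$.

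For the forward implication I would note that, because the leading coefficients of $T$ and $Q$ in $z$ are both $1$, the vanishing $\widetilde{{\bf S}_0}(x_0,y_0)=0$ is equivalent to $T(x_0,y_0,z)$ and $Q(x_0,y_0,z)$ sharing a common root. As $\Delta_{\cal Q}(x_0,y_0)=0$ leaves $z_0=-q_1(x_0,y_0)/2$ as the only root of $Q(x_0,y_0,z)$, that common root must be $z_0$; hence $T(x_0,y_0,z_0)=0$, which by the identity above is precisely the quartic condition. The converse is the same reasoning run backwards: the quartic condition together with $\Delta_{\cal Q}(x_0,y_0)=0$ says that $z_0$ is a common root of $T$ and $Q$, so these share a factor in $z$ and $\widetilde{{\bf S}_0}(x_0,y_0)=0$.

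The one point that needs real care is recovering the inequality $\Delta_{\cal T}(x_0,y_0)\le 0$ in the reverse direction, since it is absent from the right-hand side and must be shown to be automatic. Here I would exploit that $z_0$ is real: setting $w_0:=z_0^2\ge 0$, the number $w_0$ is a nonnegative root of the quadratic $g(w)=w^2+p_2(x_0,y_0)\,w+p_0(x_0,y_0)$. Because $p_2(x_0,y_0)=2(R^2-r^2+x_0^2+y_0^2)>0$ under the standing hypothesis $R>r>0$, the derivative $g'(w)=2w+p_2(x_0,y_0)$ is strictly positive on $[0,\infty)$, so $g$ is strictly increasing there. Since $g(w_0)=0$ with $w_0\ge 0$, this forces $\Delta_{\cal T}(x_0,y_0)=p_0(x_0,y_0)=g(0)\le g(w_0)=0$. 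I expect this monotonicity step to be the main (though still mild) obstacle: it is the only place where the geometry of the torus, encoded in $p_2>0$, is genuinely used, and it is what lets the two equalities on the right absorb the semialgebraic constraint $\Delta_{\cal T}\le 0$ appearing on the left.
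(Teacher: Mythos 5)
Your proposal is correct and follows essentially the same route as the paper: both arguments hinge on the fact that $\Delta_{\cal Q}(x_0,y_0)=0$ collapses $Q(x_0,y_0,z)$ to the double root $z_0=-q_1(x_0,y_0)/2$, so that $\widetilde{{\bf S}_0}(x_0,y_0)$ vanishes exactly when $T(x_0,y_0,z_0)=0$, and $16\,T(x_0,y_0,-q_1/2)=q_1^4+4p_2q_1^2+16p_0$ — this is precisely what the paper's factorization $\widetilde{{\bf S}_0}=T(x,y,z_1)T(x,y,z_2)$ specializes to. The only (minor) divergence is in recovering $\Delta_{\cal T}(x_0,y_0)\le 0$ in the reverse direction: the paper simply observes that $(x_0,y_0)$ lies on the cutcurve and invokes its earlier characterization, whereas you give a self-contained argument via the monotonicity of $w\mapsto w^2+p_2w+p_0$ on $[0,\infty)$ using $p_2>0$; both are valid, and yours has the small advantage of not depending on the cutcurve theorem.
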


\begin{proof}
First suppose that $\widetilde{{\bf S}_0}(x_0,y_0)=\Delta _{{\cal Q}}(x_0,y_0)=0  $ and  $\Delta _{{\cal T}}(x_0,y_0)\leq 0$. Then, by Equation (\ref{fact}), we have
$$
\widetilde{{\bf S}_0}(x_0,y_0)={\big(16\, p_0(x_0,y_0)  + 4\, p_2(x_0,y_0) q_1(x_0,y_0)^2 + q_1(x_0,y_0)^4\big)^2}/{2^8 },
$$
and  $ 16\, p_0(x_0,y_0)  + 4\, p_2(x_0,y_0 )q_1(x_0,y_0)^2  + q_1(x_0,y_0)^4 =0$.

Next, assume that   $ 16\, p_0(x_0,y_0)  + 4\, p_2(x_0,y_0)q_1(x_0,y_0)^2  + q_1(x_0,y_0)^4 =\Delta _{{\cal Q}}(x_0,y_0)=0$.
Then $\widetilde{{\bf S}_0}(x_0,y_0)=0$ so that there exists $z_0$ with ${\cal T}(x_0,y_0,z_0)={\cal Q}(x_0,y_0,z_0)=0$. In fact, $z_0=-q_1(x_0,y_0)/2  \in \mathbb{R}$, which implies $(x_0,y_0)$ is on the cutcurve and thus $p_0(x_0,y_0)\leq 0$.
\end{proof}

\subsection{The intersection of the torus $\mathcal{T}$ with the quadric $$\frac{x^{2}}{A}+\frac{y^{2}}{B}+\frac{z^{2}}{C}=1$$}\label{canonical}
In this case, $\widetilde{{\bf S}_0}(x,y)$ is the square of the polynomial
\begin{multline*}
{\bf S}_0(x,y)=
\left( B(A -C) x^{2}+A (B -C ) y^{2}  \right)^2
+2 A B^{2} \left(-A R^{2}-A r^{2}-C R^{2}+C r^{2}+C A -C^{2}\right) x^{2}+\\
+2 A^{2} B \left(-B R^{2}-B r^{2}-C R^{2}+C r^{2}+B C -C^{2}\right) y^{2}
+A^{2} B^{2} \left(R^{2}-r^{2}+C \right)^{2}
\end{multline*}

Replacing $x^2=X$ and $y^2=Y$ we obtain a conic with matrix  
\[
{\cal C} =
\left(\begin{array}{ccc}
{\cal C}_{11} & {\cal C}_{12} & {\cal C}_{13} \\
{\cal C}_{12} & {\cal C}_{22} & {\cal C}_{23} \\
{\cal C}_{13} & {\cal C}_{23} & {\cal C}_{33}
\end{array}\right)
\]
with ${\cal C}_{11} = B^{2} \left(A -C \right)^{2}$, ${\cal C}_{12} = A B \left(B -C \right) \left(A -C \right)$,
${\cal C}_{13} = A B^{2} \left(-A R^{2}-A r^{2}-C R^{2}+C r^{2}+C A -C^{2}\right)$, ${\cal C}_{22} = A^{2} \left(B -C \right)^{2}$, ${\cal C}_{23} = A^{2} B \left(-B R^{2}-B r^{2}-C R^{2}+C r^{2}+B C -C^{2}\right)$ and $ {\cal C}_{33} = A^{2} B^{2} (R^{2}-r^{2}+C )^{2}$.

Since $\det({\cal C}[1..2,1..2])=0$ and $$\det({\cal C})=-4 A^{4} B^{4} C^{2} R^{4} \left(A -B \right)^{2}\leq 0$$
we conclude that, when $A\neq B$, the conic given by ${\cal C}$ is a parabola and,  when $A=B$, it is the product the two parallel lines given by
$$(A-C)(X+Y)+\alpha=0\qquad (A-C)(X+Y)+\beta=0$$
with
\begin{multline*}
\alpha=\frac{2 A \left(-A \,R^{2}-A r^{2}-C R^{2}+C r^{2}+C A -C^{2}\right)}{A -C}+\frac{2 A R \sqrt{A \left(A \,r^{2}+C R^{2}-C r^{2}-C A +C^{2}\right)}}{A -C}\\
\beta=\frac{2 A \left(-A R^{2}-A r^{2}-C R^{2}+C r^{2}+C A -C^{2}\right)}{A -C}-\frac{2 A R \sqrt{A \left(A r^{2}+C R^{2}-C r^{2}-C A +C^{2}\right)}}{A -C}
\end{multline*}
This implies that the cutcurve is the image of the conic ${\cal C}$ under the map
$$(X,Y)\mapsto (\pm\sqrt{X},\pm\sqrt{Y)}$$
providing a piecewise parametric representation (not rational) of the cutcurve and of the searched intersection curve between $\mathcal{T}$  and the considered quadric.

\bigskip
\section{Examples}\label{secej}
The following examples illustrate the different results that appear in this work.

\begin{examp}\label{esfera} {\rm Consider the torus with $r=2,R=7$ and the sphere $ {\cal Q}: x^2 + y^2 + z^2 - 6^2=0$. We have $q_1\equiv 0$ and $\widetilde{S}_0=(196x^2 + 196y^2 - 6561)^2.$ In this example, the circle defined by the squarefree part of the resultant defines the cutcurve.
Furthermore, $\cal C_\mathbb R$ is described by two parallel circles (see \cite{bottema}; also called profile circles in \cite{Kim} and \cite{Li2004AlgebraicAF}) whose projections overlap. Obviously, $\mathcal{C}_\mathbb{R}$ is symmetric wrt the plane $z=0$ and every point  $(x_0,y_0)$ of the cutcurve is projection of $(x_0,y_0,\pm\sqrt{-q_0(x_0,y_0)})$. See Figure \ref{ex_esfera}. There are no singularities.


\begin{figure}[H]
\centering
 \fbox{\includegraphics[scale=0.25]{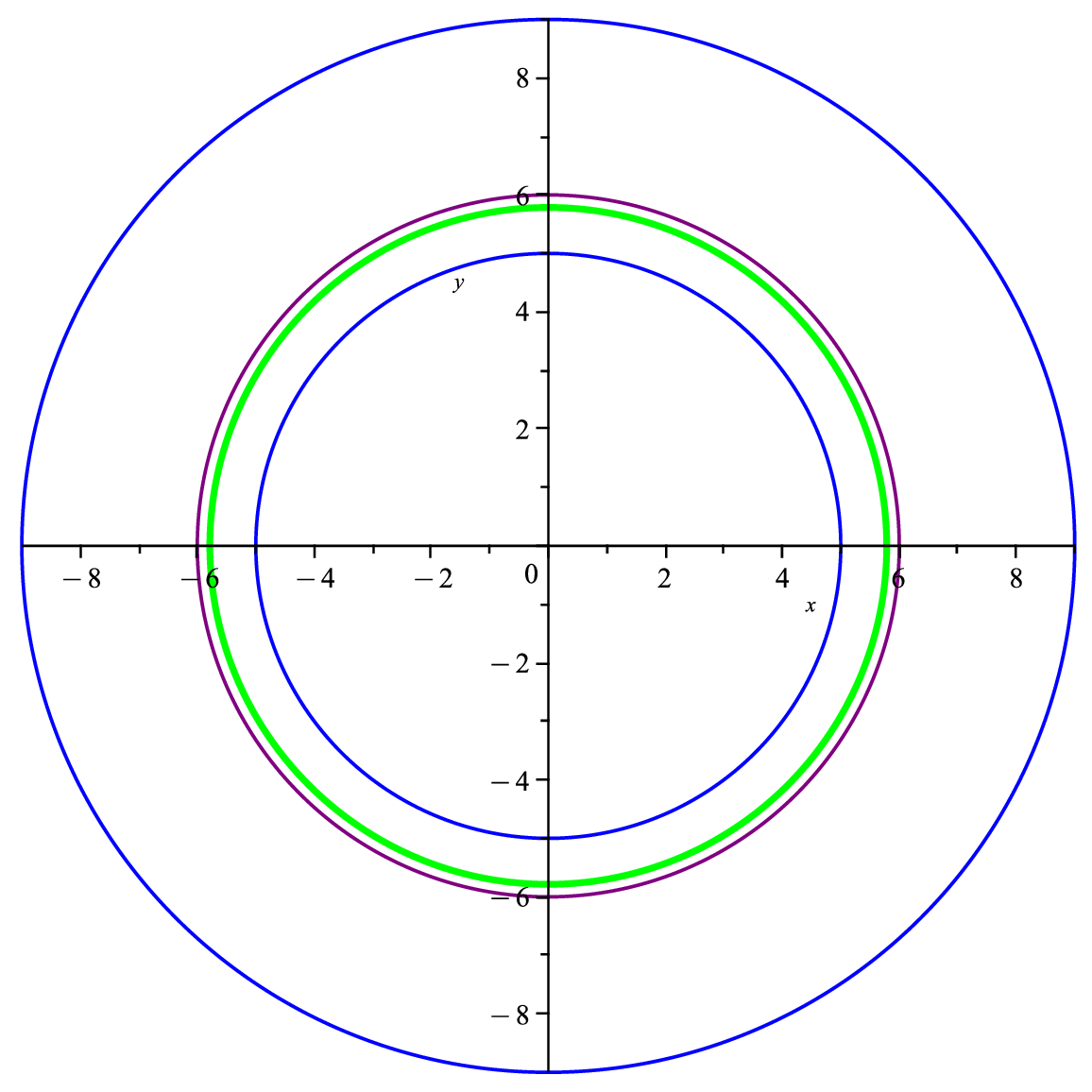}}\qquad
 \fbox{\includegraphics[scale=0.3]{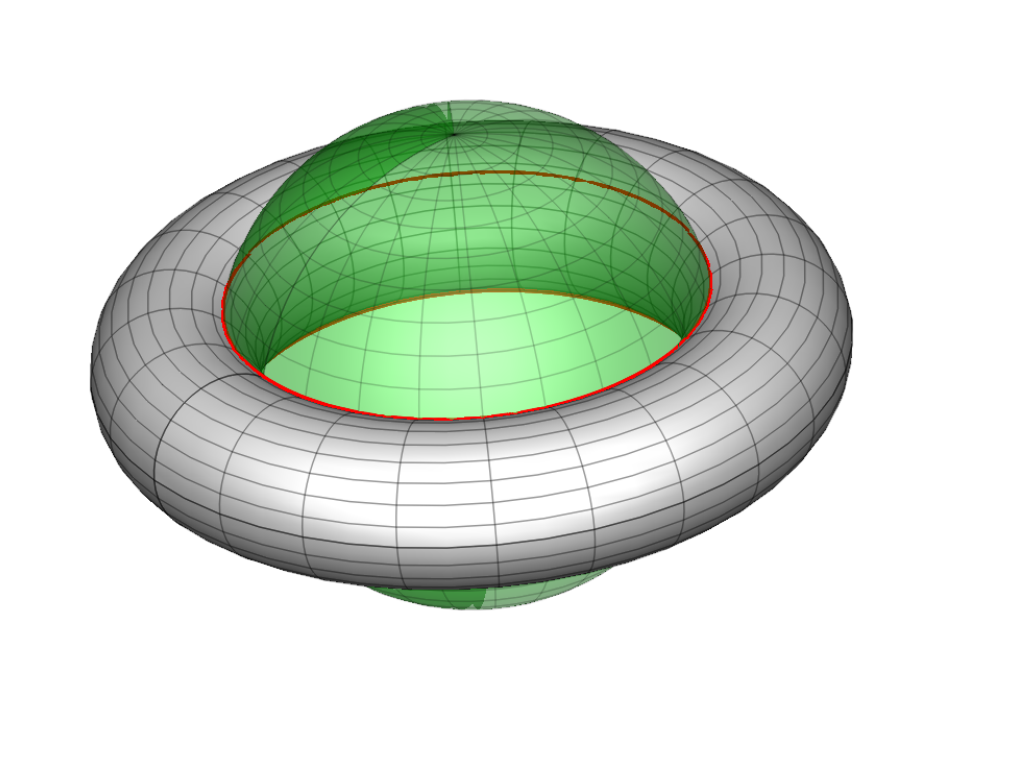}}
\caption{\label{ex_esfera} Left: $\Delta_{\cal T}$ (blue), $\Delta_{\cal Q}$ (purple) and the cutcurve (green). Right: The intersection curve defined by two parallel circles (red).}
\end{figure}
}\end{examp}

\begin{examp}\label{ej_toro_cono_vert} {\rm 
Consider the torus with $r=5$ and $R=6$, and a circular cone with the same axis as the torus (z-axis),
$ Q = (z - \frac{7}{4})^2 - \frac{9}{16} x^2 - \frac{9}{16} y^2 .$ We have that $q_1$ is a constant and the resultant is neither squarefree nor a perfect square,
$$\widetilde{{\bf S}_0}=\frac{390625}{65536} \left((x^2+y^2)^2 -\frac{43506}{625}(x^2+y^2) +81 \right) (x^2+y^2-9)^2 .$$
Here the discriminant of the cone is never negative over $\R$ and the cutcurve is directly defined by the squarefree part of the resultant, that is, by three circles. Every circle is projection of a parallel, but it should be noted that
the cone and a torus are tangent along just one, the parallel circle of radius $3$.
See Figure \ref{ex_toro_cono_1}. This example illustrates Propositions \ref{resultant_es_cutc} and  \ref{multiple_components} and Theorem \ref{lma:mult_comp}. There are no singularities.

\begin{figure}[H]
\centering
 \fbox{\includegraphics[scale=0.25]{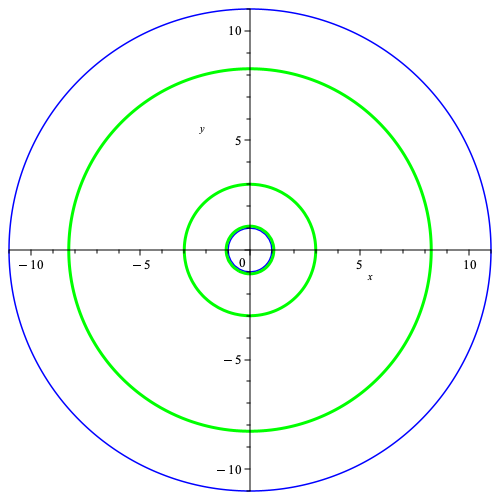}}\qquad
 \fbox{\includegraphics[scale=0.3]{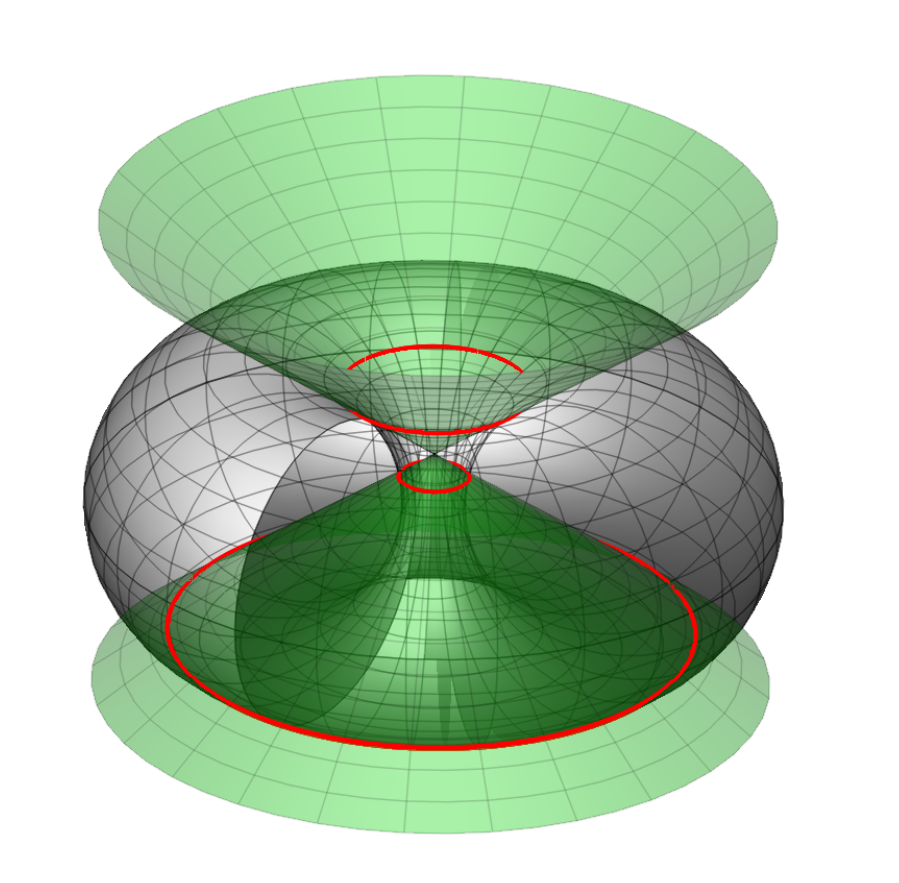}}
\caption{\label{ex_toro_cono_1} Left: $\Delta_{\cal T}$ (blue) and the cutcurve (green). Right: The intersection curve defined by three parallel circles (red).}
\end{figure}
}
\end{examp}

\begin{examp}\label{curvareal} {\rm 
Consider the torus with $r=2$ and $R=7$, and  ${\cal Q}$ is a hyperboloid of one sheet, defined by
\begin{eqnarray*}
Q(x,y,z)&=&\frac{x^2}{25}+\frac{y^2}{100}-\frac{z^2}{16}-1.
\end{eqnarray*}
Here, $q_1\equiv 0$ and 
\[
\widetilde{{\bf S}_0}= \frac{1}{160000^2}
\left(1681 x^{4}+2378 x^{2} y^{2}+841 y^{4}-63050 x^{2}-80450 y^{2}+525625\right)^{2}\,.
\]
The curve defined by ${\bf S}_0(x,y)$ is not completely contained inside the region  ${\cal A}_{{\cal T},{\cal Q}}$ (see Figure \ref{ex1}). In fact, the cutcurve is given by two arcs of the resultant curve. There are two singular points on $ \mathcal{C}_\mathbb{R}$ with $z_0=0$, whose projection are smooth points of the cutcurve, tangential intersection points of  ${\bf S}_0(x,y)$, $\Delta_{\cal Q}$ and $\Delta_{\cal T}$. This example illustrates Theorem \ref{Singq_1Siemprecero_enCorona}.

\begin{figure}[H]
\centering
 \fbox{\includegraphics[scale=0.25]{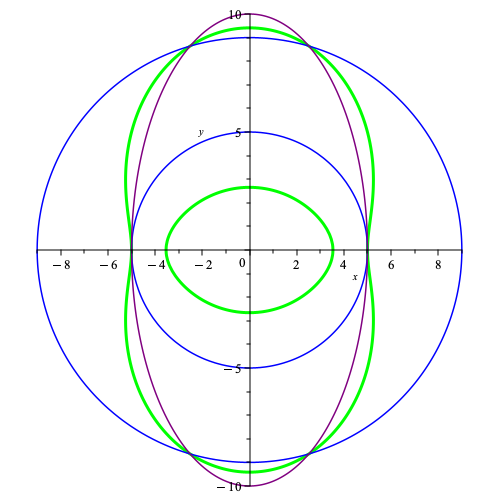}}\qquad
 \fbox{\includegraphics[scale=0.30]{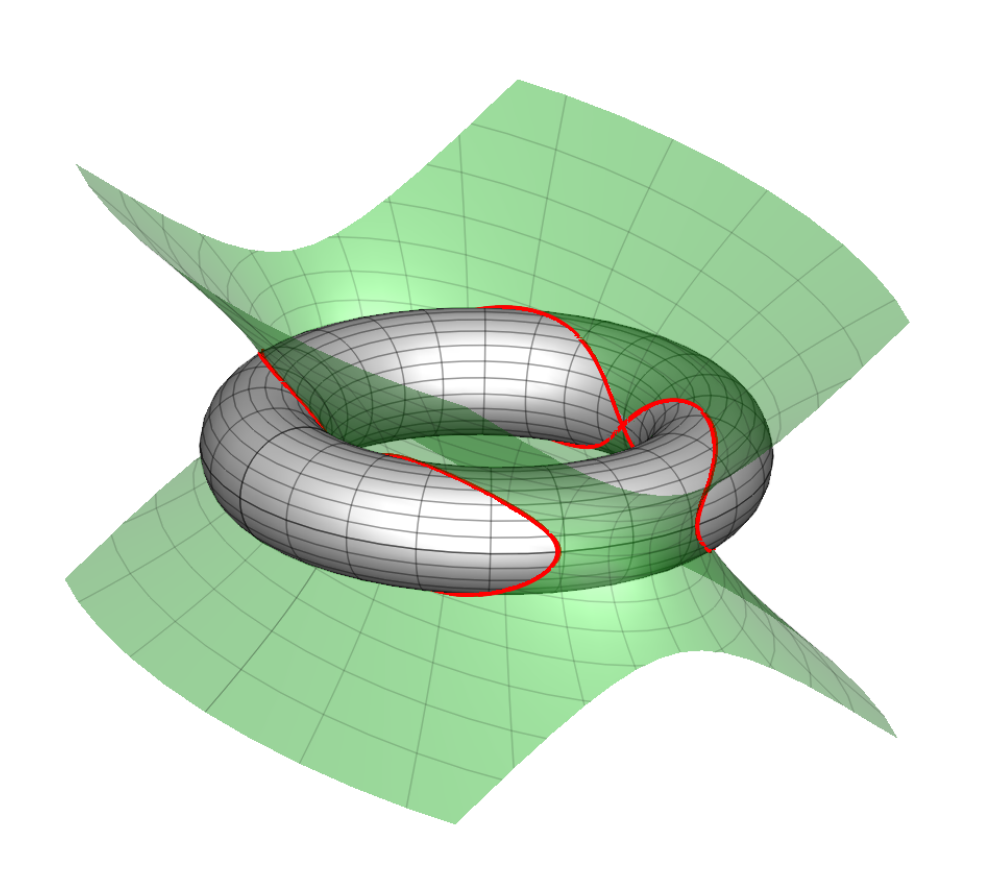}}
\caption{\label{ex1} Left: $\Delta_{\cal T}$ (blue), $\Delta_{\cal Q}$ (purple) and the curve defined by ${\bf S}_0(x,y)$ (green) for Example \ref{curvareal}. Right: The intersection curve (red).}
\end{figure}
}
\end{examp}

\begin{examp}\label{t9-elipse2} {\rm 
Consider the torus with $r=\frac{3}{2}$ and $R=3$, and the ellipsoid ${\cal Q}$, defined by
$Q(x,y,z)=z^2 - \frac{3}{5} xz +\frac{3}{4} x^2+\frac{3}{25} y^2 - 3 .$
Here $q_1=- \frac{3}{5} x$ and the resultant is squarefree. Hence, the cutcurve is directly defined by the resultant polynomial.

The cutcurve has four singularities; all of them are in the line $\{q_1=0 \}$ and they are all projection of regular points. 
Each of them is lifted to a pair of symmetric regular points on $ \mathcal{C}_\mathbb{R}$. This example illustrates Proposition \ref{resultant_es_cutc}, Corollary \ref{q10S} and Remark \ref{coverticalidad}. See Figure \ref{ex3}.
 

\begin{figure}[H]
\centering
 \fbox{\includegraphics[scale=0.25]{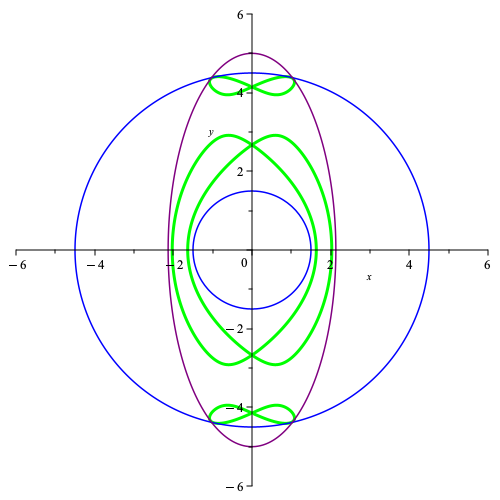}}\qquad
 \fbox{\includegraphics[scale=0.35]{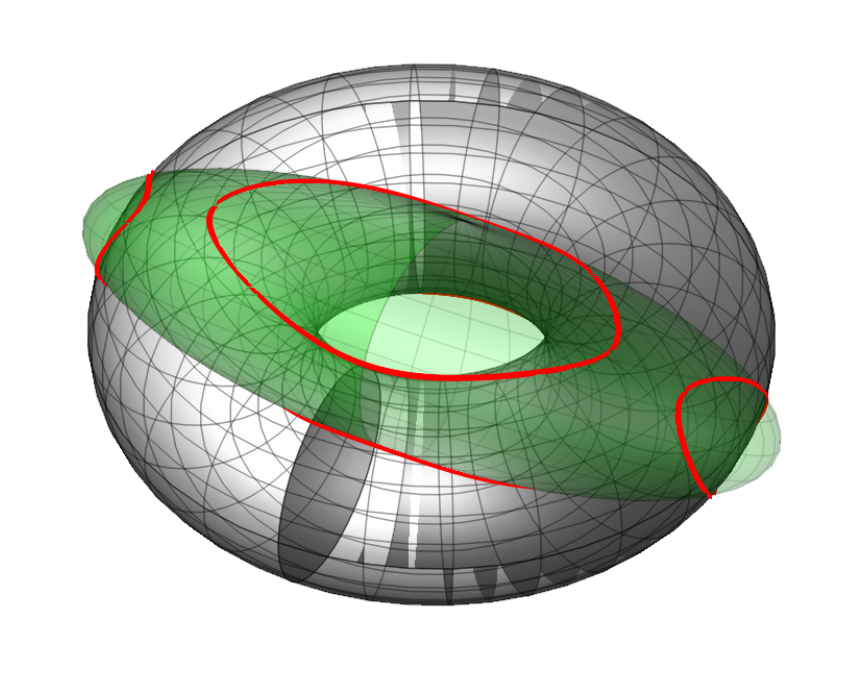}}
\caption{\label{ex3} Left: $\Delta_{\cal T}$ (blue), $\Delta_{\cal Q}$ (purple) and the cutcurve (green).
Right: The intersection curve (red). For example \ref{t9-elipse2}}
\end{figure}
 }\end{examp}

\begin{examp}\label{toro-8} {\rm 
Consider the torus with $r=1$ and $R=3$, and the ellipsoid defined by
\begin{eqnarray*}
Q(x,y,z)&=&z^{2}+\left(\frac{10 y}{21}-\frac{5}{42}+\frac{10 x}{21}\right) z +\frac{5 x^{2}}{3}+\frac{5 y^{2}}{12}-\frac{5 y}{12}-\frac{155}{48}+\frac{10 x}{3}\, .
\end{eqnarray*}

The cutcurve is directly defined by the resultant, which is squarefree, and has two singularities, see Figure \ref{ex4}. One of them is on the line $q_1=0$ and it is projection of two regular points. 
However, the other one is the projection of a singular point of the intersection curve with $z_0\neq 0$. 
This example illustrates Corollary \ref{q10S}, Theorem \ref{singarribaabajo}  and Proposition \ref{Singq_1nocero_noCorona}.

\begin{figure}[H]
\centering
 \fbox{\includegraphics[scale=0.25]{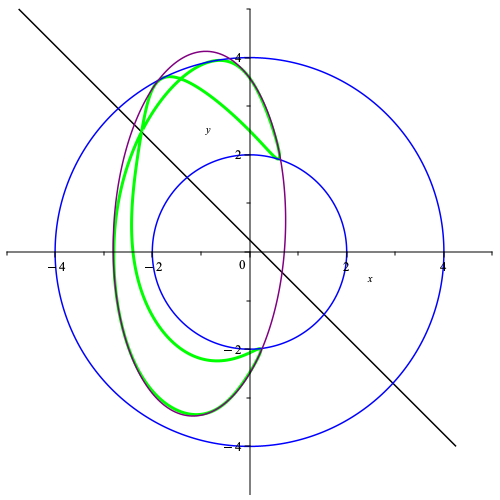}}\qquad
  \fbox{\includegraphics[scale=0.35]{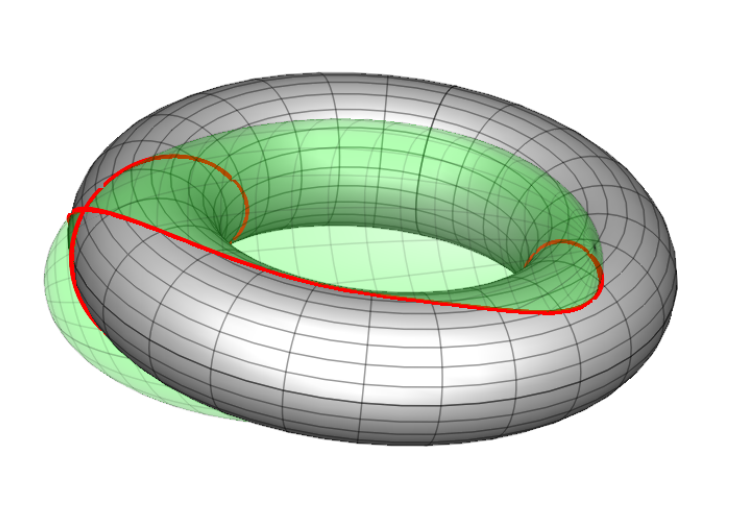}}
\caption{\label{ex4} Left: $\Delta_{\cal T}$ (blue), $\Delta_{\cal Q}$ (purple), the cutcurve (green) and the line $q_1=0$ (black) for Example \ref{toro-8}.
Right: The intersection curve (red).}
\end{figure}

}\end{examp}

\begin{examp}\label{YVC} {\rm 
Consider the torus with $r=1$ and $R=3$, and the sphere defined by
\begin{eqnarray*}
Q(x,y,z)&=&\left(z -2\right)^{2}+\left(x -\frac{\sqrt{6}}{2}\right)^{2}+y^{2}-\frac{27}{2}.
\end{eqnarray*}
Observe that $q_1$ is a constant. The cutcurve is directly defined by the resultant, which is squarefree, and has two singularities. 
Both are  projections of singular points of the intersection curve, which is defined by two Yvone-Villarceau circles  (see \cite{Kim}),  with $z_0\neq 0$. This example illustrates Theorem \ref{singarribaabajo} and Proposition \ref{Singq_1nocero_noCorona}. See Figure \ref{exyvc}.

\begin{figure}[H]
\centering
 \fbox{\includegraphics[scale=0.25]{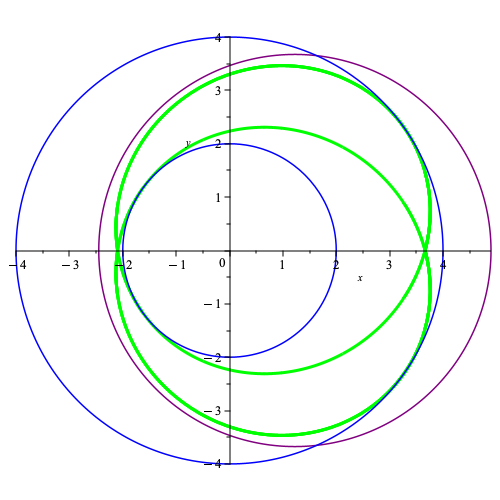}}\qquad
  \fbox{\includegraphics[scale=0.3]{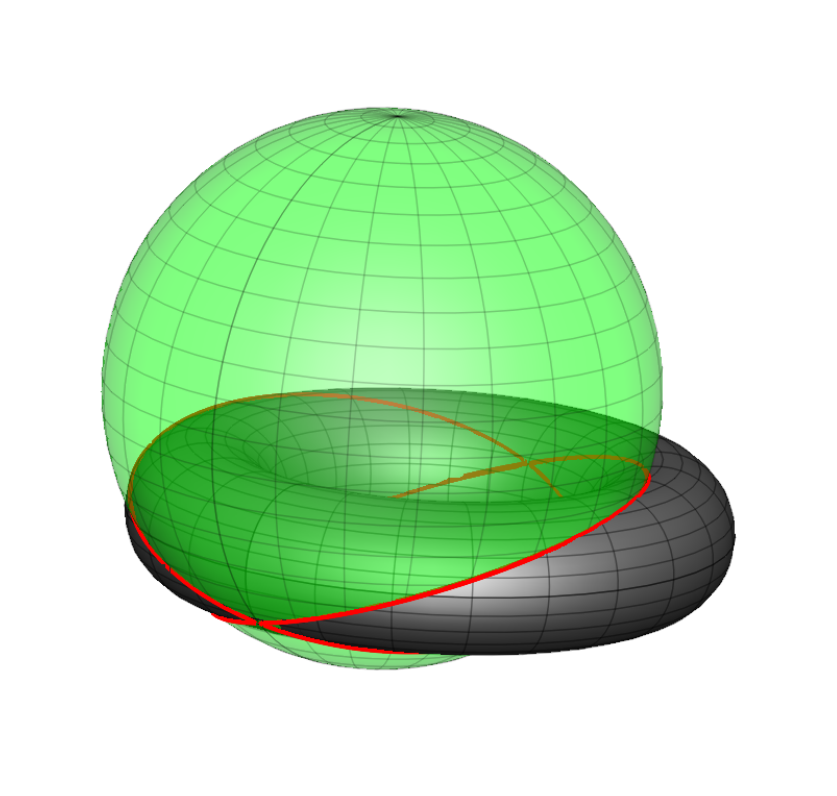}}
\caption{\label{exyvc} Left: $\Delta_{\cal T}$ (blue), $\Delta_{\cal Q}$ (purple) and the cutcurve (green) .
Right: Intersection curve (red) for Example \ref{YVC}.}
\end{figure}

}
\end{examp}

%
%
%
%
%

\begin{examp}\label{varios-5} {\rm 
Consider the torus with $r=1, R=3$ and the ellipsoid defined by
$$Q(x,y,z)=z^{2}+\left(-x -y +2\right) z +3 x^{2}-20+3 y^{2}+4 x = 0 .
$$
Here $q_1\not\equiv 0$ and the resultant is also squarefree. The cutcurve has two singular points and both of them satisfy $q_1=0$. One is the point $(2,0)$, where both silhouettes meet tangentially. This point lifts to the singular point $(2,0,0)$. 
However, the other singular point lifts to two regular points in the intersection curve. 
This example illustrates Corollary \ref{q10S}, Theorem \ref{cortangent2} and Corollary \ref{porfin}. See Figure \ref{toro-elip-5}.
 
 \begin{figure}[H]
\centering
 \fbox{\includegraphics[scale=0.25]{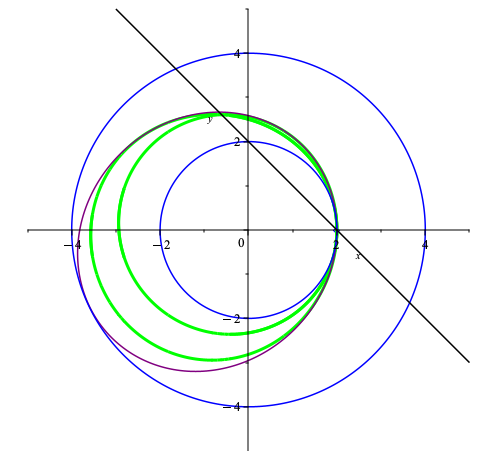}} \hspace*{0.5cm}
 \fbox{\includegraphics[scale=0.3]{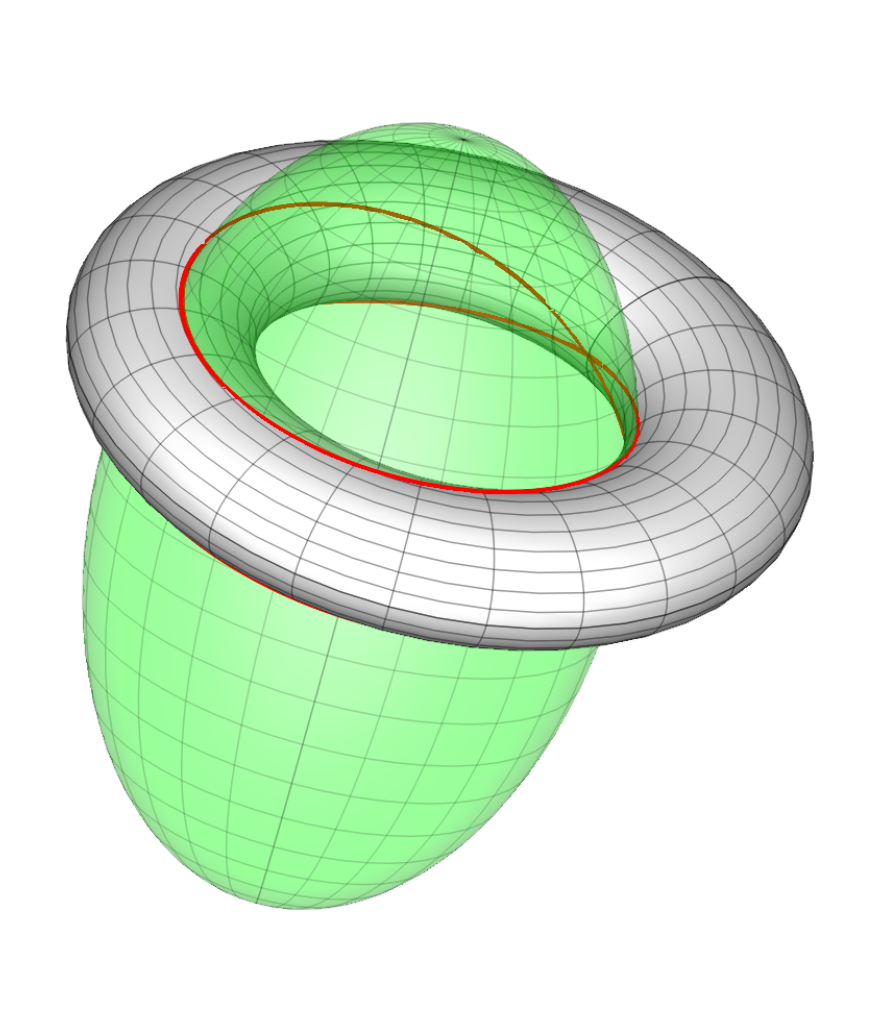}}
\caption{\label{toro-elip-5}
Left: $\Delta_{\cal T}$ (blue), $\Delta_{\cal Q}$ (purple), the line $q_1=0$ (black) and the cutcurve (green). Right: The intersection curve (red) for Example \ref{varios-5}. }
\end{figure}
}
\end{examp}


\begin{examp}\label{toro9-cil} {\rm 
Consider the torus with $r=1$ and $R=4$, and the elliptic cylinder
$$ Q(x,y,z)=(x-4)^2 + (z+y)^2 -1 =0 .$$
Then $ \widetilde{{\bf S}_0}= 16 \left(y^{6}+\left(2 x^{2}-2\right) y^{4}+\left(x^{4}+62 x^{2}-512 x +961\right) y^{2}+64 x^{4}-512 x^{3}+960 x^{2}\right) y^{2} $.

Observe that $q_1$ is equal to $2 y$, so that $q_1^2$ divides the resultant. Thus, by Proposition \ref{multiple_components}, there is a segment on the cutcurve which is the projection of a meridian circle of $\cal T \cap \cal Q$, the segment that joins $(3, 0)$ to $(5, 0)$. Moreover, these points are singularities of the cutcurve because they belong to the two components of the resultant curve. This implies that the intersection curve has two components as well, and their intersection points correspond precisely to the singular points that project onto $(3, 0)$ and $(5, 0)$, case. 
See Figure \ref{toro-cyl-elip}.
\begin{figure}[H]
\centering
 \fbox{\includegraphics[scale=0.25]{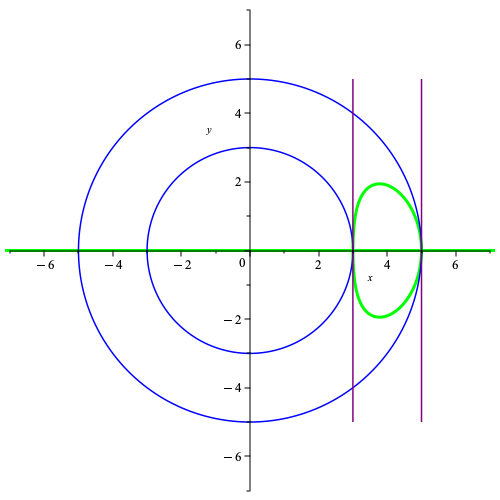}} \hspace*{0.1cm}
 \fbox{\includegraphics[scale=0.25]{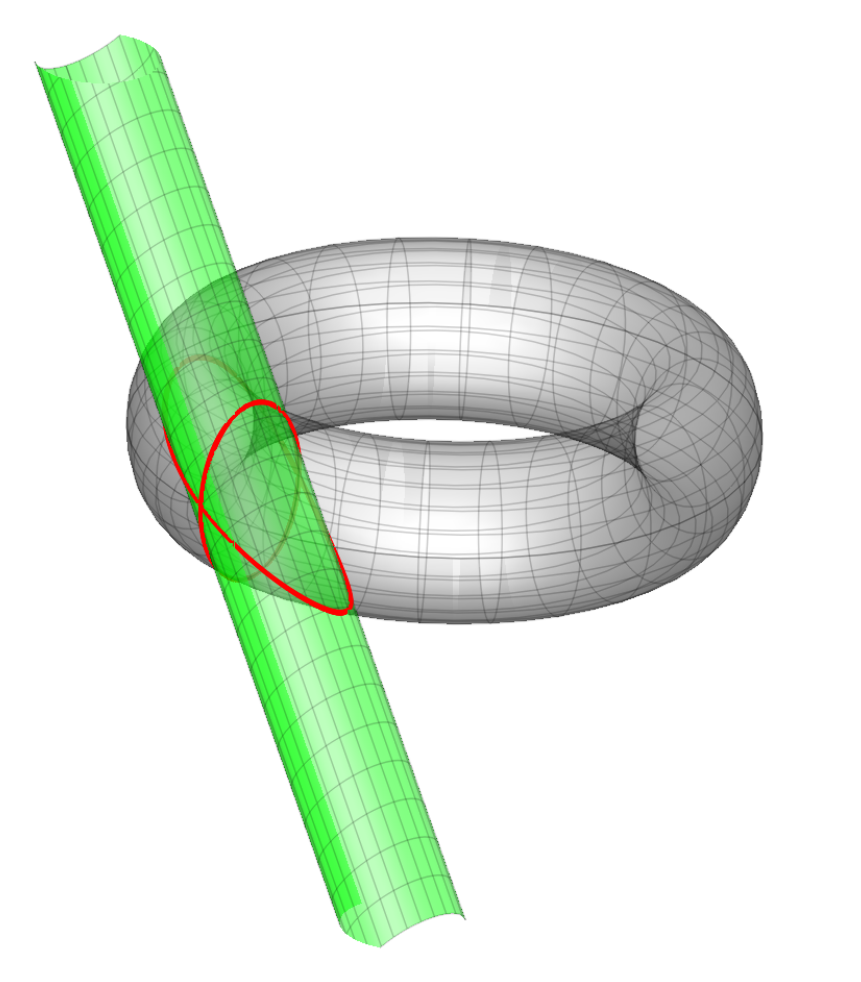}}\;
 \fbox{\includegraphics[scale=0.75]{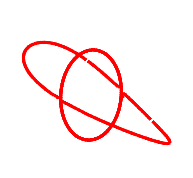}}
\caption{\label{toro-cyl-elip}
Left: $\Delta_{\cal T}$ (blue), $\Delta_{\cal Q}$ (purple) and the cutcurve (green). Center/Right: Intersection curve (red) for Example \ref{toro9-cil}. }
\end{figure}
}
\end{examp}

\begin{examp}\label{cuentasjge-cil}
 {\rm 
Consider the torus with $r=1$ and $R=4$, and the cylinder
$$  Q(x,y,z)=z^2 + (x - 4)^2-1=0$$
Here $q_1\equiv 0$, $ \widetilde{{\bf S}_0}= (y^2(y^2 + 16x - 64))^2 $ and the cutcurve is defined by two arcs contained in ${\cal A}_{{\cal T},{\cal Q}}$. In Figure \ref{cuentasjge-cil-drw}, observe that the segment which joins $(3, 0)$ to $(5, 0)$ is also the projection of a meridian circle of $\cal T \cap \cal Q$ along which the torus and the cylinder are tangent.
Observe that hypotheses of Theorem \ref{Singq_1Siemprecero_enCorona} are not verified. On the other hand, the cutcurve has a singularity which corresponds to the projections of two singularities on the space curve, with $z_0\neq 0$. 
This example illustrates Theorems \ref{singarribaabajo}, \ref{cortangent2} and \ref{lma:mult_comp}.

\begin{figure}[H]
\centering
 \fbox{\includegraphics[scale=0.25]{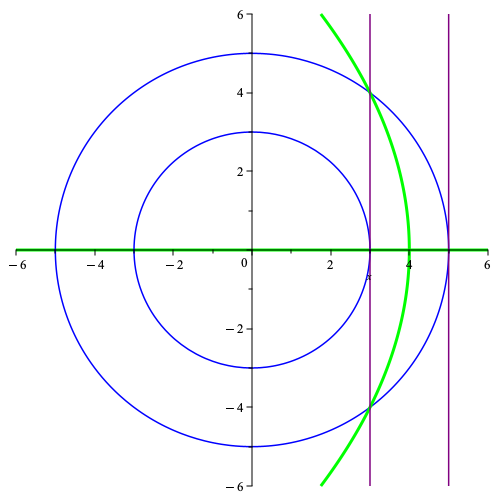}} \hspace*{0.1cm}
 \fbox{\includegraphics[scale=0.35]{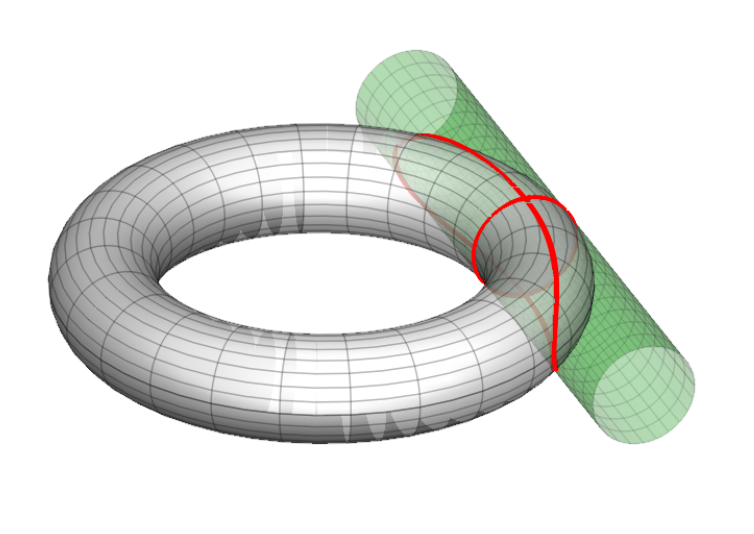}}
\caption{\label{cuentasjge-cil-drw}
Left: $\Delta_{\cal T}$ (blue), $\Delta_{\cal Q}$ (purple) and the cutcurve (green). Right: Intersection curve (red) for Example \ref{cuentasjge-cil}. }
\end{figure}
}
\end{examp}

\begin{examp}\label{toro9-2sheet}
 {\rm 
Consider the torus with $r=2$ and $R=4$, and the two sheet hyperboloid
$$ Q(x,y,z)=z^{2}+\frac{9}{4} y^{2} -\frac{9}{16} x^{2}-\frac{9}{4} x +\frac{27}{4}=0 .$$
Then $q_1 \equiv 0$ and $ \widetilde{{\bf S}_0}= {\bf S}_0^2 $. The cutcurve is defined by an arc and a single point of the resultant curve inside the region $\cal A _{\cal T,\cal Q}$.
In Figure \ref{toro-2sheet}, we can see that the point $(2, 0)$ on the cutcurve is tangent to the cutcurve curve and both silhouette curves, and lifts to the self-intersection point of the intersection curve $(2, 0, 0),$ in the plane $\{z=0\}$.
On the other hand, the point $(-6, 0)$, where the cutcurve is tangent to both silhouette curves as well, corresponds to the isolated point $(-6, 0, 0)$ on the intersection curve. 
This example illustrates Theorem  \ref{Singq_1Siemprecero_enCorona}. 

\begin{figure}[H]
\centering
 \fbox{\includegraphics[scale=0.25]{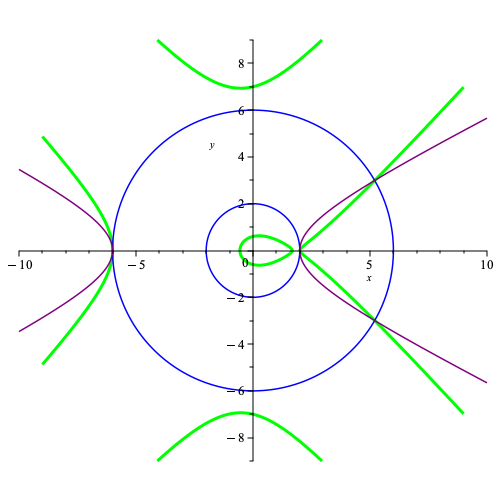}}
\hspace*{0.2cm}
 \fbox{\includegraphics[scale=0.35]{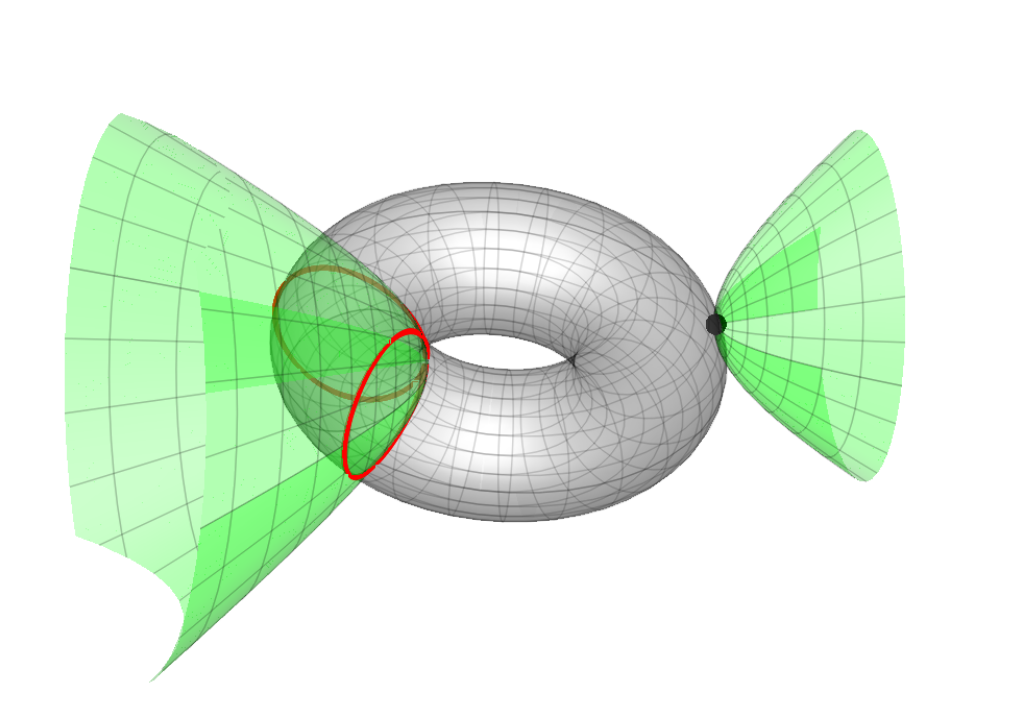}}
\caption{\label{toro-2sheet}
Left: $\Delta_{\cal T}$ (blue), $\Delta_{\cal Q}$ (purple) and the cutcurve (green). Right: Intersection curve (red) for Example \ref{toro9-2sheet}. }
\end{figure}
}
\end{examp}

\begin{examp}\label{cuentasjorge_arribanoabajosi} {\rm 
Consider the torus with $r=1$ and $R=4$, and the cone
$$ Q(x,y,z)=z^2 +yz +  (x - 4)^2 - 1 + y=0 .$$
Then $q_1 =y$ and $ \widetilde{{\bf S}_0} =y^2(x^4y^2 + 4x^2y^4 + \cdots+ 6144y^2) $. Observe that the cutcurve has two components: the segment of the line $y=0$ in ${\cal A}_{{\cal T},{\cal Q}}$, which is the projection of a meridian circle, and the one defined by the other factor of the resultant.

In Figure \ref{fig_cuentasjorge_arribanoabajosi}, observe that the point $(4, 0)$ on the cutcurve is singular, intersection of the two components of the cutcurve. It lifts to two points, the point $(4,0,1)$, which is not singular, and the point $(4,0,-1)$, which is a singular point of the intersection curve. 
This example illustrates Corollary \ref{singnotwosing}, Theorems \ref{singarribaabajo} and \ref {lma:mult_comp}, and Proposition \ref{multiple_components}.
\begin{figure}[H]
\centering
 \fbox{\includegraphics[scale=0.25]{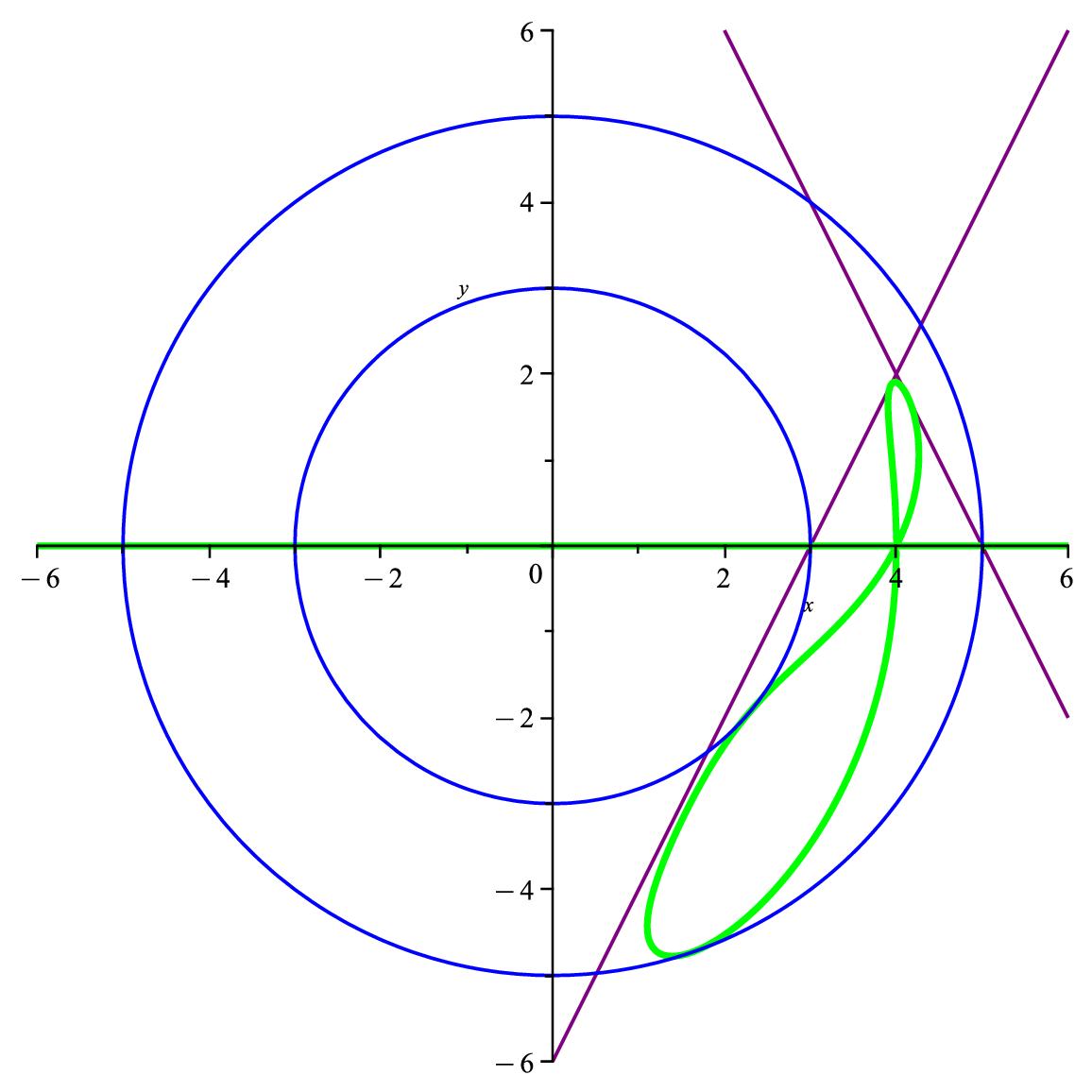}} \hspace*{0.1cm}
 \fbox{\includegraphics[scale=0.33]{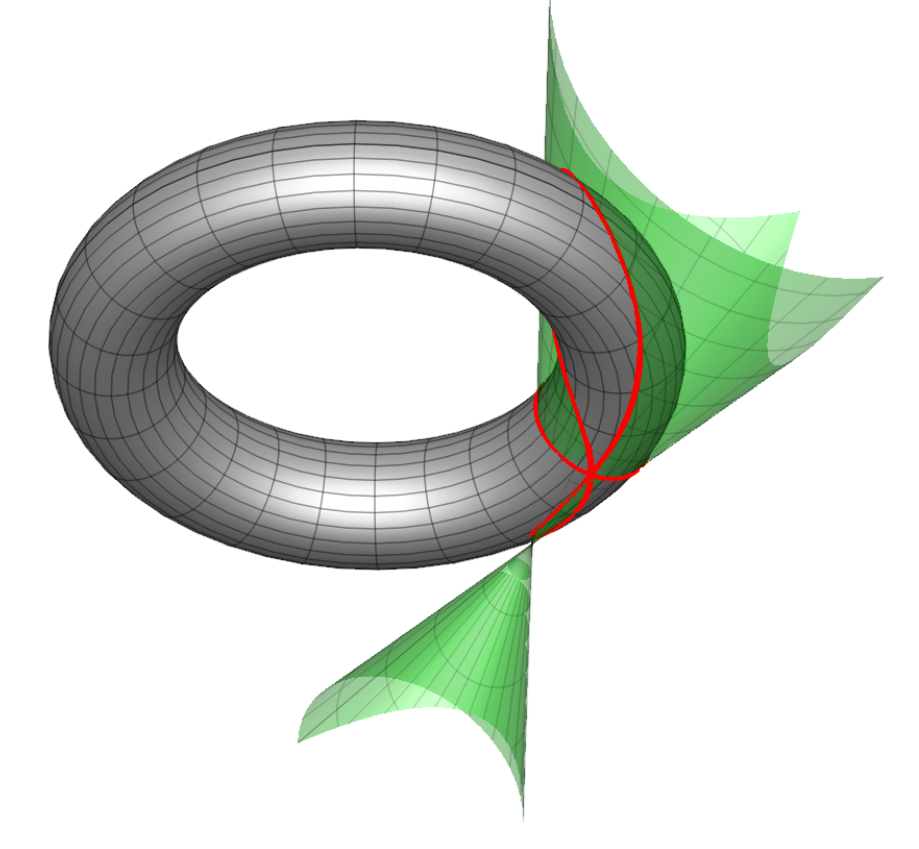}}\;
 \fbox{\includegraphics[scale=0.70]{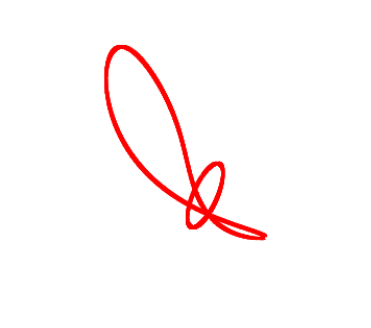}}
\caption{\label{fig_cuentasjorge_arribanoabajosi}
Left: $\Delta_{\cal T}$ (blue), $\Delta_{\cal Q}$ (purple) and the cutcurve (green). Center/Right: Intersection curve (red) for Example \ref{cuentasjorge_arribanoabajosi}. }
\end{figure}
}
\end{examp}

\begin{examp}\label{q10tang}
{\rm 
Consider the torus with $r=2$ and $R=7$, and the hyperboloid
$$ Q(x,y,z)=z^2 - x^2 - y^2  +  25=0 .$$
Then $q_1 \equiv 0$, $ \widetilde{{\bf S}_0} =16(x^2 + y^2 - 4)^2(x^2 + y^2 - 25)^2=4(x^2 + y^2 - 4)^2\Delta_{\cal Q }^2$, the cutcurve is defined by $\Delta_{\cal Q }=x^2 + y^2 - 25=0$ and observe that $ x^2 + y^2 - 25$ is the interior circle of $\Delta_{\cal T}$. Moreover, the intersection, the minimum parallel, is tangential along all the space curve, see Figure \ref{figq10tang}. The hypotheses of Theorem \ref{Singq_1Siemprecero_enCorona} are not verified, and so there are no singularities. This example illustrates Theorem \ref{lma:mult_comp}. 

 \begin{figure}[H]
\centering
 \fbox{\includegraphics[scale=0.45]{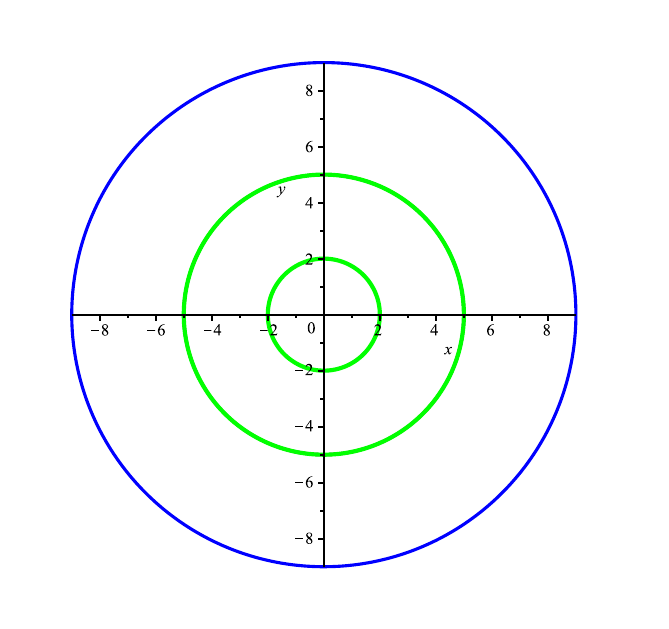}} \hspace*{0.1cm}
 \fbox{\includegraphics[scale=0.30]{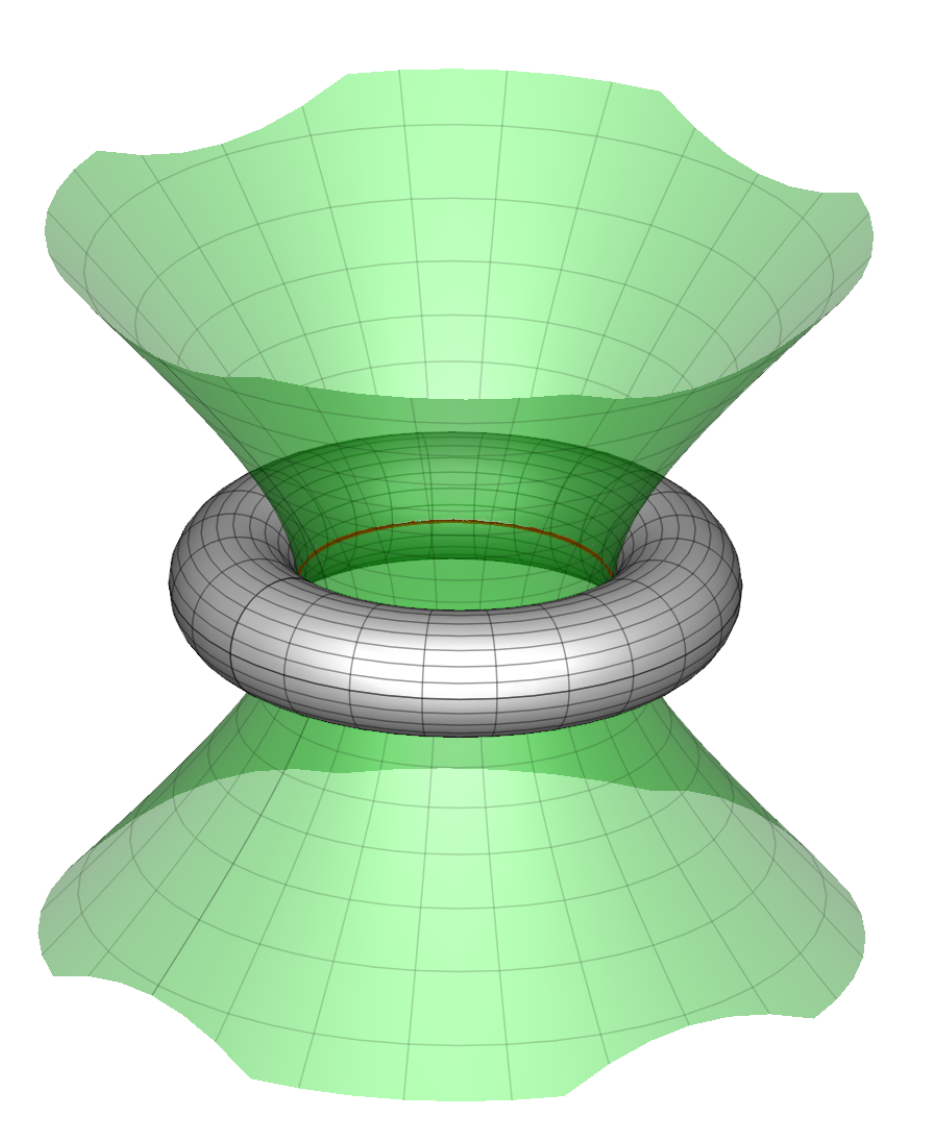}}\;
\caption{\label{figq10tang}
Left: $\Delta_{\cal T}$ (blue) and the cutcurve (green). Right: Intersection curve (red) for Example \ref{q10tang}. }
\end{figure}
}
\end{examp}

\begin{examp}\label{sing_singreg}
 {\rm Consider the torus with $r=5$ and $R=10$, and the  hyperboloid
$$ Q(x,y,z)=  z^{2}-x^{2} +2 y^{2}+\left(x +2 y -13\right) z+4 x y+28 x -60 y -211 .$$
In this case, the resultant is squarefree. The point $(13, 0)$ satisfies $q_1=0$, and so it is singular on the cutcurve. We observe that this point is the projection of  both $(13, 0,-4)$ and $(13, 0,4)$. While the point $(13, 0,4)$ corresponds to a singular point on the intersection curve, the point $(13, 0,-4)$ is regular. 
This example illustrates Theorem \ref{singarribaabajo} and Corollary \ref{singnotwosing}. See Figure \ref{Fsing_singreg}.   
 
\begin{figure}[H]
\centering
 \fbox{\includegraphics[scale=0.25]{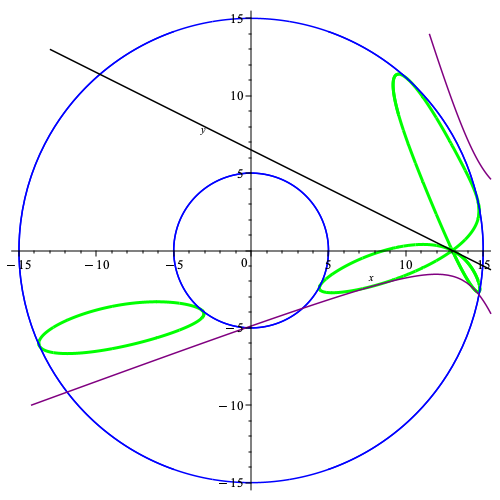}} \hspace*{0.1cm}
 \fbox{\includegraphics[scale=0.35]{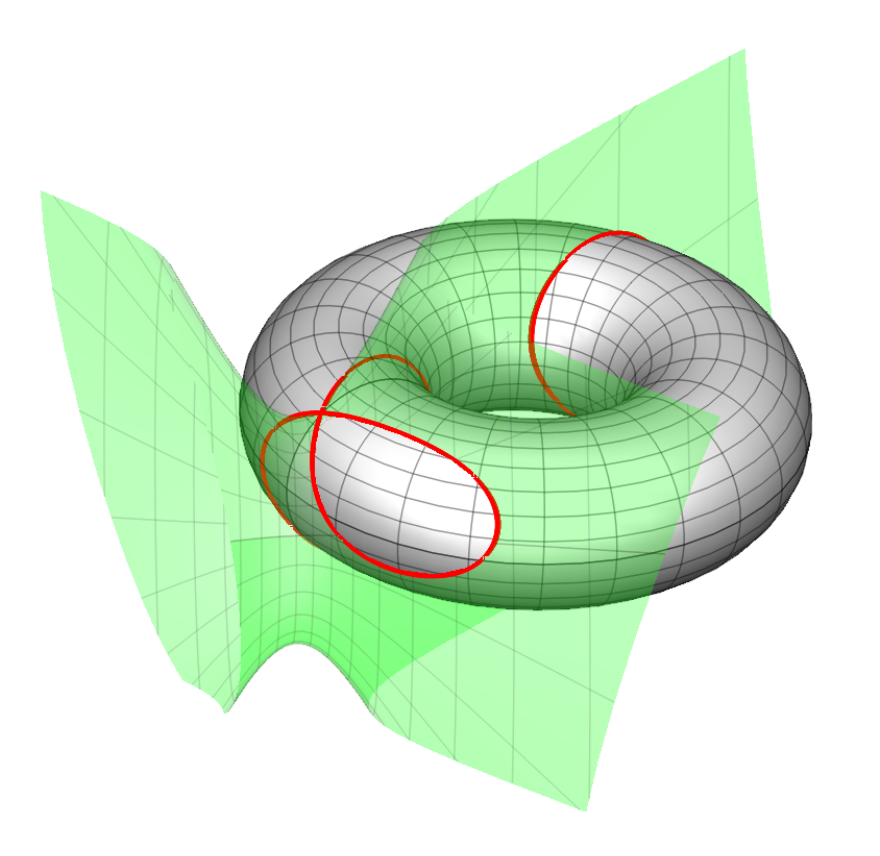}}\;
 \fbox{\includegraphics[scale=0.2]{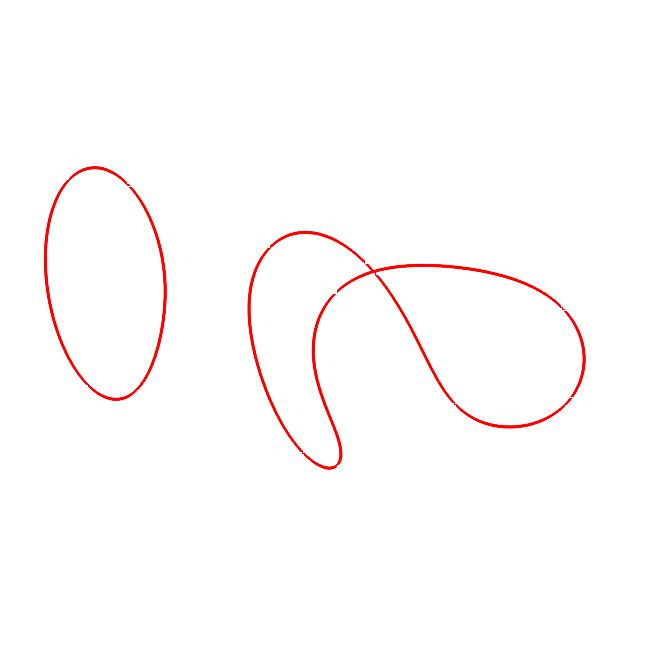}}
\caption{\label{Fsing_singreg}
Left: $\Delta_{\cal T}$ (blue), $\Delta_{\cal Q}$ (purple), the line $q_1$ (black) and the cutcurve (green). Center/Right: Intersection curve (red) for Example \ref{sing_singreg}. }
\end{figure}
}
\end{examp}

\begin{examp}\label{cono}
 {\rm 
Consider the torus with $r=4$ and $R=6$, and the cone
$$  Q(x,y,z)=z^2 - (x - 2)^2-y^2=0$$
Here $q_1\equiv 0$, $ \widetilde{{\bf S}_0}= 16 \left(x^{4}+2 x^{2} y^{2}+y^{4}-4 x^{3}-4 x \,y^{2}-8 x^{2}-12 y^{2}-48 x +144\right)^{2} $ and the cutcurve is defined by the squarefree part of $\widetilde{{\bf S}_0}$. In Figure \ref{cono_gm_mw}, observe that the real plot of $\Delta_{\cal{Q}}$ is just a point, which is the projection of the singular point of the cone. This example illustrates Proposition \ref{singularidadcono} and Theorem \ref{Singq_1Siemprecero_enCorona}.

\begin{figure}[H]
\centering
 \fbox{\includegraphics[scale=0.25]{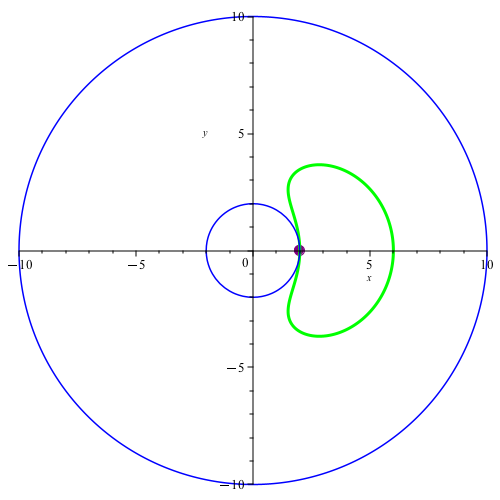}} \hspace*{0.1cm}
 \fbox{\includegraphics[scale=0.3]{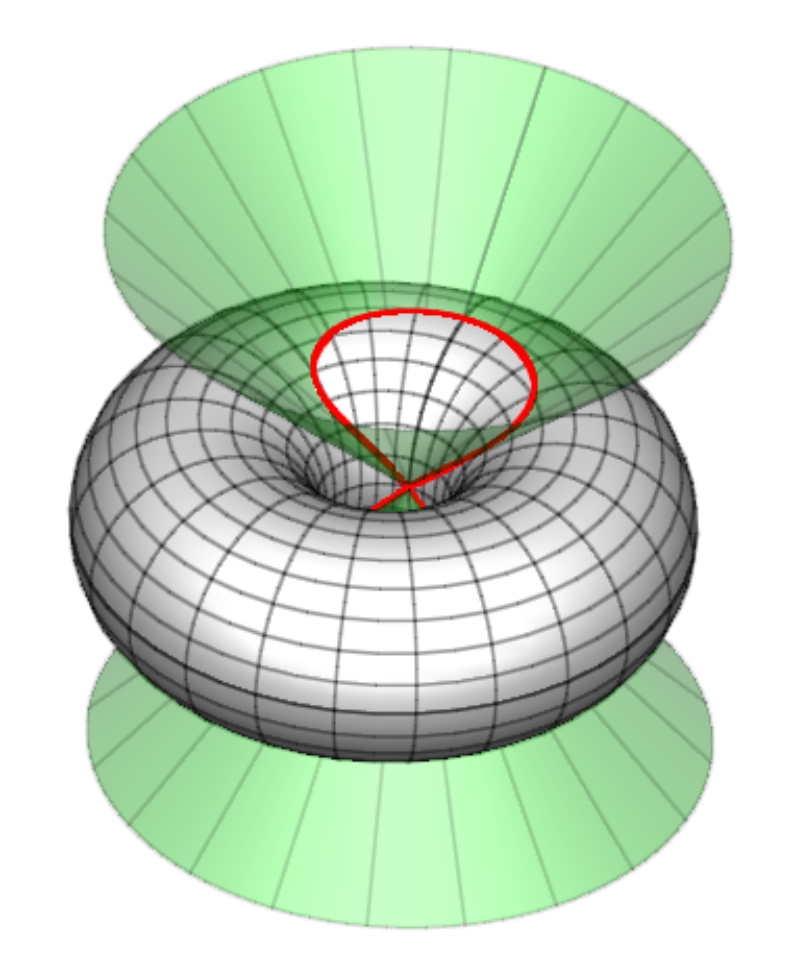}}
\caption{\label{cono_gm_mw}
Left: $\Delta_{\cal T}$ (blue), $\Delta_{\cal Q}$ (a purple point) and the cutcurve (green). Right: Intersection curve (red) for Example \ref{cono}. }
\end{figure}
}
\end{examp}

\section{\bf Acknowledgements. }
Authors are partially supported by Ministerio de Ciencia, Innovaci\'on y Universidades - Agencia Estatal de Investigaci\'on PID2020-113192GB-I00 (Mathematical Visualization: Foundations, Algorithms and Applications).


\bibliographystyle{elsarticle-num}

\end{document}